\documentclass{article}

\RequirePackage{amsthm,amsmath,amsfonts,amssymb}
\RequirePackage[numbers]{natbib}
\RequirePackage[colorlinks,citecolor=blue,urlcolor=blue]{hyperref}
\RequirePackage{graphicx}

\usepackage[ruled, vlined]{algorithm2e}
\makeatletter
\renewcommand*{\@algocf@post@ruled}{}
\makeatother

\newcommand\blfootnote[1]{%
  \begingroup
  \renewcommand\thefootnote{}\footnote{#1}%
  \addtocounter{footnote}{-1}%
  \endgroup
}

\usepackage{multirow}
\usepackage{arydshln}
\usepackage[pagewise]{lineno}
\usepackage{stmaryrd}
\usepackage[dvipsnames]{xcolor} 
\usepackage{enumitem}
\usepackage{float}
\usepackage{subcaption}
\usepackage[utf8]{inputenc}
\usepackage{authblk}
\usepackage{orcidlink}
\usepackage{indentfirst}
\usepackage[includeheadfoot,margin=2.54cm]{geometry}

\numberwithin{equation}{section}

\theoremstyle{plain}

\newtheorem{corollary}{Corollary}
\newtheorem{lemma}{Lemma}
\newtheorem{proposition}{Proposition}

\theoremstyle{remark}
\newtheorem{remark}{Remark}
\newtheorem{assumption}{Assumption}
\newtheorem{definition}{Definition}

\DeclareMathOperator*{\argmin}{arg\,min}
\DeclareMathOperator*{\argmax}{arg\,max}

\DeclareMathOperator*{\logdet}{log\,det}
\DeclareMathOperator*{\domain}{dom}
\DeclareMathOperator*{\support}{supp}
\DeclareMathOperator*{\tr}{tr}

\DeclareMathOperator*{\minimize}{minimize}
\DeclareMathOperator*{\maximize}{maximize}
\newcommand{\prox}{\text{prox}}

\title{\bf On variational inference and maximum likelihood estimation with the $\lambda$-exponential family}
\author[1,a,$\ast$]{Thomas Guilmeau}
\author[1,b]{Emilie Chouzenoux}
\author[2]{Víctor Elvira}

\affil[1]{Université Paris-Saclay, CentraleSupélec, INRIA, CVN, France}
\affil[ ]{$^{\textrm{a}}$ \texttt{thomas.guilmeau@inria.fr} \orcidlink{0000-0002-8484-6550}}
\affil[ ]{$^{\textrm{b}}$ \texttt{emilie.chouzenoux@centralesupelec.fr} \orcidlink{0000-0003-3631-6093}}
\affil[2]{School of Mathematics, University of Edinburgh, United Kingdom}
\affil[ ]{\texttt{victor.elvira@ed.ac.uk} \orcidlink{0000-0002-8967-4866}}
\date{}

\begin{document}

\maketitle

\begin{abstract}

The $\lambda$-exponential family has recently been proposed to generalize the exponential family. While the exponential family is well-understood and widely used, this is not the case yet for the $\lambda$-exponential family. However, many applications require models that are more general than the exponential family, and the $\lambda$-exponential family is often a good alternative. In this work, we propose a theoretical and algorithmic framework to solve variational inference and maximum likelihood estimation problems over the $\lambda$-exponential family. We give new sufficient optimality conditions for variational inference problems. Our conditions take the form of generalized moment-matching conditions and generalize existing similar results for the exponential family. We exhibit novel characterizations of the solutions of maximum likelihood estimation problems, that recover optimality conditions in the case of the exponential family. For the resolution of both problems, we propose novel proximal-like algorithms that exploit the geometry underlying the $\lambda$-exponential family. These new theoretical and methodological insights are tested on numerical examples, showcasing their usefulness and interest, especially on heavy-tailed target distributions.
\blfootnote{\textit{Keywords.} Variational inference, maximum likelihood estimation, Rényi divergence, $\lambda$-exponential family, generalized subdifferential, heavy-tailed distribution.}\blfootnote{\textit{2020 Mathematics Subject Classification.} Primary: 62F99, 62B11, 49K10; Secondary: 90C26.}\blfootnote{T.G. and E.C. acknowledge support from the ERC Starting Grant MAJORIS ERC-2019-STG-850925. The work of V. E. is supported by the \emph{Agence Nationale de la Recherche} of France under PISCES (ANR-17-CE40-0031-01), the Leverhulme Research Fellowship (RF-2021-593), and by ARL/ARO under grant W911NF-22-1-0235.}\blfootnote{$\ast$ Corresponding author: Thomas Guilmeau.}
\end{abstract}

\section{Introduction}

Variational inference and maximum likelihood estimation are two classes of statistical problems arising in many applications. In variational inference, one aims at approaching an intractable target distribution of interest by a distribution from a family of (usually parametric) approximating densities. This is done by minimizing a discrepancy measure, such as the Kullback-Leibler \cite{kullback1951} or the Rényi \cite{renyi1961} divergence, between the target distribution and its approximation over the approximating family. In maximum likelihood estimation, one gets data samples, selects a parametric model to account for the unknown data-generating distribution, and then searches for the parameter maximizing the model likelihood over the data samples. These two optimization tasks are deeply related as maximum likelihood estimation is equivalent to minimizing a Kullback-Leibler divergence in the large number of samples limit \cite{white1982}.

In variational inference and maximum likelihood estimation, a popular choice for the approximating family is the exponential family \cite{barndorff-nielsen2014}. The exponential family is a family of probability distributions indexed by a finite-dimensional parameter, with the parameter appearing in the definition of the density through a scalar product with a sufficient statistics. Many well-known families of distributions can be written as instances of the exponential family, such as the Gaussian distributions. The exponential family benefits from numerous theoretical properties, many of them coming from convex analysis \cite{barndorff-nielsen2014}. For instance, the exponential family contains the distributions with a sufficient statistics, a fact known as the Pitman-Koopman-Darmois theorem \cite{tichochinsky1984}. This implies that the maximum likelihood estimator over the exponential family is reached when a moment-matching condition on sufficient statistics is satisfied \cite{campbell1970}. In variational inference, minimizing the Kullback-Leibler divergence over the exponential family leads to optimality conditions which can also be written as a moment-matching condition on sufficient statistics (see \cite{bishop2006, cappe2008, wainwright2008}). Thus, variational inference and maximum likelihood problems over the exponential family are both solved when moment-matching conditions are satisfied.

The exponential family also benefits from many geometric properties \cite{amari1985, nielsen2010}. Indeed, the Kullback-Leibler divergence between two distributions from the exponential family can be seen as the Bregman divergence induced by the log-partition function of the family. Bregman divergences generalize the Euclidean distance, and can be plugged in optimization algorithms, leading for instance to the so-called Bregman proximal gradient algorithms \cite{teboulle2018}. These properties can be leveraged to design more efficient algorithms over the exponential family in many settings \cite{banerjee2005, hoffman2013, khan2017, guilmeau2022}.

Despite the advantages of using the exponential family, there exists some contexts where it is better to use other types of distributions. For instance, the exponential family cannot represent physical systems governed by large fluctuations, such as cold atoms in optical lattices \cite{douglas2006}. In ecology, using Gaussian kernels to account for the diffusion of a population does not allow to represent species invading a territory with increasing speed, while heavier-tailed kernels can \cite{kot1996}. In signal processing and statistics, Student priors have been used to enforce signal sparsity \cite{chantas2008} or for logistic regression \cite{gelman2008}, and Cauchy distributions to model noise \cite{laus2018}. Using Student distributions rather than Gaussian ones have also been proven beneficial to cluster heavy-tailed data in \cite{peel2000}, while Student distributions have been used successfully in importance sampling \cite{cappe2008, elvira2019, wang2022}. 

Motivated by these situations, several works generalize the exponential family and extend its properties. These generalizations are often indexed by a scalar parameter, with the value zero corresponding to the exponential family. One can mention the $q$-exponential family studied in \cite{amari2011}, the $\mathcal{F}^{(\alpha)}$-family and $\mathcal{F}^{(-\alpha)}$-family of \cite{wong2018}, and the unifying $\lambda$-exponential family studied in \cite{wong2022}. We focus on the latter in this paper as it recovers the two former ones. The densities of distributions from the $\lambda$-exponential family are similar to those from the standard exponential family, but the scalar product between the parameter and what plays the role of sufficient statistics is replaced by a non-linear coupling. Instances of the $\lambda$-exponential family are the Student distributions (including Cauchy distributions), the Student Wishart distributions \cite{ayadi2023}, the $\beta$-Gaussian distributions \cite{martins2022}, or the Dirichlet perturbation model \cite{wong2022}. The geometric properties of these families have also been studied in the above papers. More precisely, and similarly to the situation for the standard exponential family, the authors of \cite{wong2022} established strong links between the $\lambda$-exponential family, the Rényi divergence, and a quantity that generalizes the Bregman divergence. Note that while the exponential family is studied using convex duality, the authors of \cite{wong2022} proposed the theory of $\lambda$-duality to study the $\lambda$-exponential family.

Generalizations of the exponential family have already been used in several tasks in statistics. Let us mention the creation of paths between distributions \cite{masrani2021}, neural attention mechanisms and regression problems with bounded noise \cite{martins2022}, or the understanding of generative adversarial networks based on $f$-divergences \cite{nowozin2016, nock2017}. Let us also mention the work of \cite{kainth2022} in which an optimization algorithm using a generalization of the Bregman divergence is studied and applied for maximum likelihood estimation over the $\lambda$-exponential family.

However, the $\lambda$-exponential family has been less studied than the standard exponential family. Indeed, to our knowledge, $(i)$ variational inference problems over generalizations of the exponential family have not been studied, $(ii)$ maximum likelihood estimation problems are usually solved within a particular $\lambda$-exponential family (see the works of \cite{hasanasab2021, ayadi2023} for instance), and $(iii)$ no algorithm exploits explicitly the geometry of these models (see \cite{kainth2022} for an exception).

As a summary, we propose a theoretical analysis and a novel methodological framework that allows to tackle variational inference and maximum likelihood estimation problems on the $\lambda$-exponential family. Our contributions are as follows:
\begin{itemize}
    \item[$(i)$] We give new optimality conditions for variational inference problems on the $\lambda$-exponential family that generalize the existing moment-matching conditions for the exponential family.
    \item[$(ii)$] We propose novel characterizations for the solutions of maximum likelihood estimation problems. We show that these are optimal conditions in the case of the exponential family, and related (in a sense we explicit) to optimal ones in the case of the $\lambda$-exponential family.
    \item[$(iii)$] We introduce new algorithms generalizing moment-matching to solve the considered variational inference and maximum likelihood problems, including an expectation-maximization algorithm. Our algorithms are shown to be related to proximal algorithms in the geometry induced by the Rényi divergence.
    \item[$(iv)$] All the aforementioned results are obtained using a novel theoretical framework to study the exponential family and the $\lambda$-exponential family based on non-convex duality. This new framework allows us to recover known results for the exponential family and to generalize them in a simple and unified way.
    \item[$(v)$] We illustrate numerically the behavior of our algorithms on variational inference and maximum likelihood estimation problems involving heavy-tailed distributions, showing the benefits of our novel theoretical results.
\end{itemize}

The paper is organized as follows. We present some background in Section \ref{section:preliminaries}. In Section \ref{section:firstResults}, we state our main assumptions, an important example, and our main technical results. In Section \ref{section:applicationsStatisticalProblems}, we apply these novel results to analyze, in a systematic way, variational inference and maximum likelihood estimation problems. We also propose proximal-like algorithms to solve these problems and compare the situation between the $\lambda$-exponential family and the standard exponential family. We illustrate our findings in Section \ref{section:numerics} through numerical experiments. Finally, we present future research developments and conclude in Section \ref{section:conclusion}.

\section{Preliminaries}
\label{section:preliminaries}

We introduce some preliminary background on divergences \cite{vanErven2014}, the $\lambda$-exponential family \cite{wong2022}, and convex analysis \cite{bauschke2011} that we will use throughout the rest of the paper.

\subsubsection*{Notation} 

We introduce some notation that will hold throughout the paper. $\mathcal{H}$ is a real Hilbert space in finite dimension with scalar product $\langle \cdot, \cdot \rangle$. Given a natural number $d$, $\mathcal{S}_{+}^d$ denotes the set of positive semi-definite matrices in dimension $d$, $\mathcal{S}_{++}^d$ denotes the set of positive definite matrices in dimension $d$, and $\mathcal{S}^d_{--}$ denotes the set of matrices obtained as the opposite of matrices in $\mathcal{S}_{++}^d$. Finally, $\mathbb{R}_{++}$ is the set of positive real numbers and $\bar{ \mathbb{R}}$ is the extended real line.

The set $\mathcal{X}$ with its Borel algebra is a measurable space, $m$ is a measure on this space, and $\mathcal{P}(\mathcal{X},m)$ is the set of probability measures on this space which admit a density with respect to $m$. We will often use the same notation for a distribution of $\mathcal{P}(\mathcal{X},m)$ and its density. The letter $S$ will be used to denote the support of a distribution. The restriction of a probability density $q \in \mathcal{P}(\mathcal{X},m)$ to a set $\mathcal{Y}$ is denoted by $q_{| \mathcal{Y}}$. We denote the Lebesgue measure by $dx$. The family of Gaussian distributions in dimension $d$ will be denoted by $\mathcal{G}^d$, and the family of Student distributions in dimension $d$ with degree of freedom parameter $\nu > 0$ by $\mathcal{T}_{\nu}^d$ (the formal definition is recalled in the remaining).

Generally, we used sub-scripts to describe the dependence over a scalar parameter, an index, or an iteration count, while we used super-scripts to denote escort distributions or conjugate functions, two notions that will be defined later on in the paper.

\subsection{Entropies and statistical divergences}
Let us introduce statistical notions that we will leverage through the rest of the paper. The first one is the entropy of a probability distribution, which is related to the information the distribution encodes.

\begin{definition}
    \label{def:entropy}
    Consider $\alpha > 0$, $\alpha \neq 1$, and a probability distribution $p \in \mathcal{P}(\mathcal{X}, m)$. Then the \emph{Rényi entropy} is defined by
    \begin{equation}
        H_{\alpha}(p) = \frac{1}{1-\alpha} \log \left( \int p(x)^{\alpha} m(dx) \right).
    \end{equation}
    When $\alpha = 1$, we define $H_1$ as the standard \emph{Shannon entropy}, that is
    \begin{equation}
        H_1(p) = - \int \log \left(p(x) \right) p(x) m(dx).
    \end{equation}
    If the integrals above do not converge, then the corresponding entropies are equal to $+\infty$.
\end{definition}

We now introduce the Rényi and Kullback-Leibler divergences. These divergences measure the discrepancy between two probability densities. Although they are not distances, they are non-negative, and they are null if and only if the two considered densities are equal almost everywhere.

\begin{definition}
    \label{def:divergences}
    Consider $\alpha > 0$, $\alpha \neq 1$, and probability distributions $p_1, p_2 \in \mathcal{P}(\mathcal{X}, m)$. Then the \emph{Rényi divergence} between $p_1$ and $p_2$ is defined by
    \begin{equation}
        RD_{\alpha}(p_1,p_2) = \frac{1}{\alpha-1} \log \left( \int p_1(x)^{\alpha} p_2(x)^{1-\alpha} m(dx) \right).
    \end{equation}
    When $\alpha = 1$, we define $RD_{1}$ as the \emph{Kullback-Leibler divergence} through
    \begin{equation}
        RD_1(p_1,p_2) = KL(p_1, p_2) = \int \log \left( \frac{p_1(x)}{p_2(x)} \right) p_1(x) m(dx).
    \end{equation}
    If these quantities are not defined, then the divergence takes the value $+\infty$.
\end{definition}

The notations $H_1$ and $RD_1$ in Definitions \ref{def:entropy} and \ref{def:divergences}, respectively, are motivated by the property that when $\alpha \rightarrow 1$, the Rényi entropy identifies with the Shannon entropy and the Rényi divergence with the Kullback-Leibler divergence \cite{vanErven2014}.

We conclude this section by defining a transformation that, for a given probability density, leads to its so-called escort distribution, parametrized by a scalar parameter $\alpha > 0$. When $\alpha = 1$, this transformation is simply the identity (i.e., the distribution identifies with its escort).

\begin{definition}
    \label{def:escortProb}
    Consider $\alpha > 0$ and $p \in \mathcal{P}(\mathcal{X}, m)$. The \emph{escort probability distribution} with exponent $\alpha$ of $p$ is the probability $p^{(\alpha)} \in \mathcal{P}(\mathcal{X}, m)$ defined by
    \begin{equation}
        p^{(\alpha)}(x) = \frac{1}{\int p(x)^{\alpha}m(dx)} p(x)^{\alpha},
    \end{equation}
    assuming the normalization constant $\int p(x)^{\alpha}m(dx)$ is finite.
\end{definition}

\subsection{The exponential family and the $\lambda$-exponential family}

We introduce the $\lambda$-exponential family, which is a generalization of the standard exponential family. Such family is obtained by replacing the scalar product $\langle\cdot,\cdot\rangle$, in the definition of the standard exponential family, by a non-linear coupling $c_{\lambda}$ defined as
\begin{equation}
    \label{eq:coupling}
    c_{\lambda}(u,v) = \frac{1}{\lambda} \log( 1 + \lambda \langle u,v \rangle),\,\forall u,v \in \mathcal{H}.
\end{equation}
Since $c_{\lambda}(u,v) \xrightarrow[]{\lambda \rightarrow 0} \langle u,v \rangle$, we denote $c_{0}(u,v) = \langle u,v \rangle$, $\forall u,v \in \mathcal{H}$.

We now turn to the definition of the $\lambda$-exponential family, following the formalism of \cite{wong2022}. This definition encompasses the definition of the standard exponential family. We set the conventions that $\log(s) = -\infty$ when $s \leq 0$ and $\exp(-\infty) = 0$. We also give examples in Figure \ref{fig:densitiesLambda} of densities from the $\lambda$-exponential family for different values of $\lambda$.

\begin{definition}
    \label{def:lambda-expFamily}
    Consider $\lambda \in \mathbb{R}$. The \emph{$\lambda$-exponential family $\mathcal{Q}_{\lambda}$ with sufficient statistics $T$ and base measure $m$} is the family $\mathcal{Q}_{\lambda} = \{q_{\vartheta} \in \mathcal{P}(\mathcal{X},m),\, \vartheta \in \domain \varphi_{\lambda} \}$, with
    \begin{equation}
        \label{eq:lambdaExpFamilyDensity}
        q_{\vartheta}(x) = \exp \left( c_{\lambda}(\vartheta,T(x)) - \varphi_{\lambda}(\vartheta) \right),
    \end{equation}
    where $c_{\lambda}$ is the non-linear coupling defined in Equation \eqref{eq:coupling}. Function $\varphi_{\lambda}$ in \eqref{eq:lambdaExpFamilyDensity} is the $\lambda$-log-partition function, defined for any $\vartheta \in \domain \varphi_{\lambda}$ by
    \begin{equation}
        \label{eq:lambdaLogPartition}
        \varphi_{\lambda}(\vartheta) = \log \left( \int \exp( c_{\lambda}(\vartheta, T(x)) m(dx) \right).
    \end{equation}
    The support of $q_{\vartheta}$ is the set $S_{\vartheta} = \{x \in \mathcal{X},\, 1 + \lambda \langle \vartheta, T(x) \rangle > 0\}$.
    When $\alpha = 1-\lambda$ is positive, we introduce, for any $\vartheta \in \domain \varphi_{\lambda}$, the entropy function 
    \begin{equation}
        \label{eq:entropyFunctionDef}
        \psi_{\lambda}(\vartheta) = -H_{\alpha}(q_{\vartheta}),\,\forall \vartheta \in \domain \varphi_{\lambda}.
    \end{equation}
\end{definition}

\begin{remark}
When $\lambda = 0$, we have $c_0(\cdot,\cdot) = \langle \cdot, \cdot \rangle$, and we recover in \eqref{eq:lambdaExpFamilyDensity} the standard notion of exponential family, that is $q_{\vartheta}(x) = \exp(\langle \vartheta, T(x)\rangle - \varphi_0( \vartheta ))$. In this case, the family is denoted by $\mathcal{Q}$ and we drop the subscript $\lambda$.
\end{remark}

\begin{figure}[H]
    \centering
    \begin{subfigure}[b]{0.32\textwidth}
        \includegraphics[width = \textwidth]{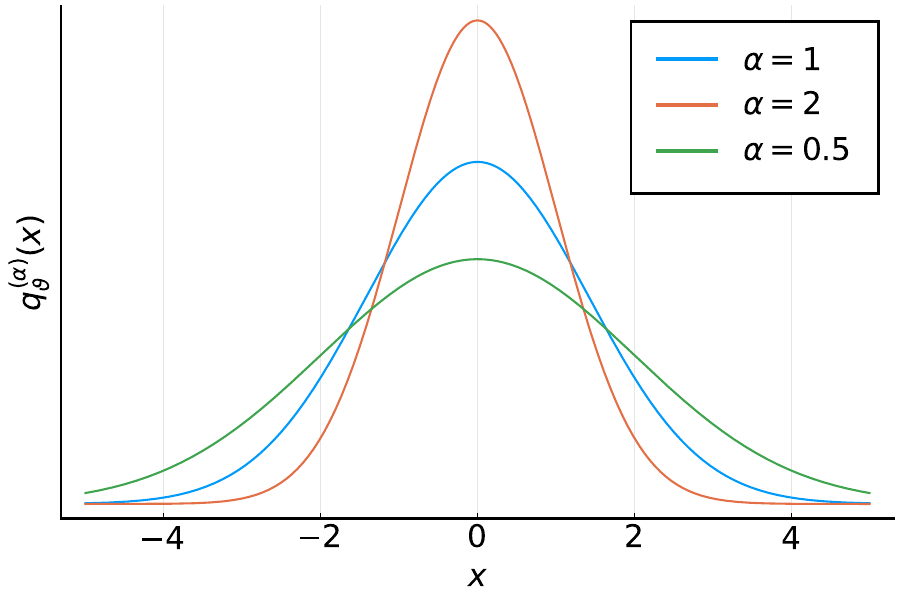}
        \caption{$\lambda=0$}
    \end{subfigure}  
    \hfill
    \begin{subfigure}[b]{0.32\textwidth}
        \includegraphics[width = \textwidth]{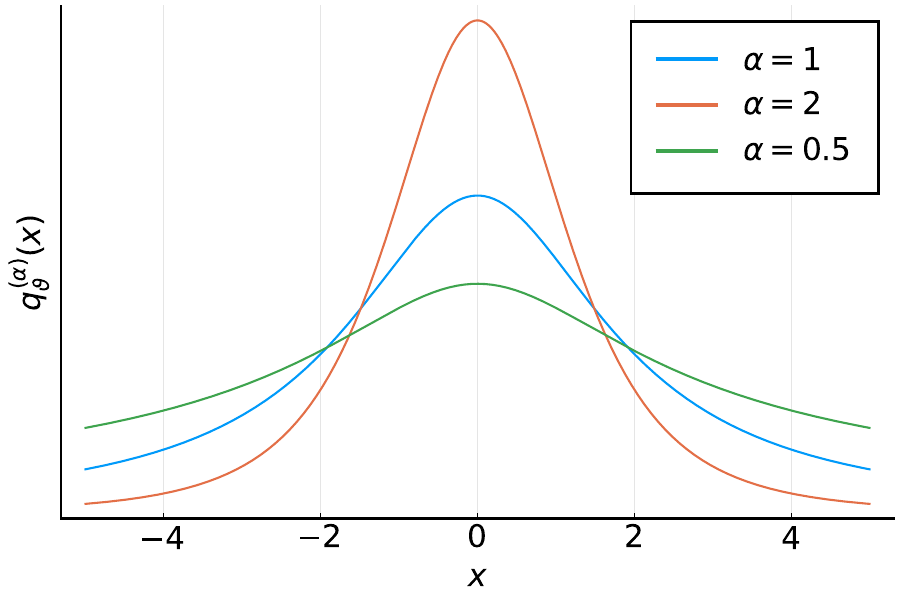}
        \caption{$\lambda = -1$}
    \end{subfigure}  
    \hfill
    \begin{subfigure}[b]{0.32\textwidth}
        \includegraphics[width = \textwidth]{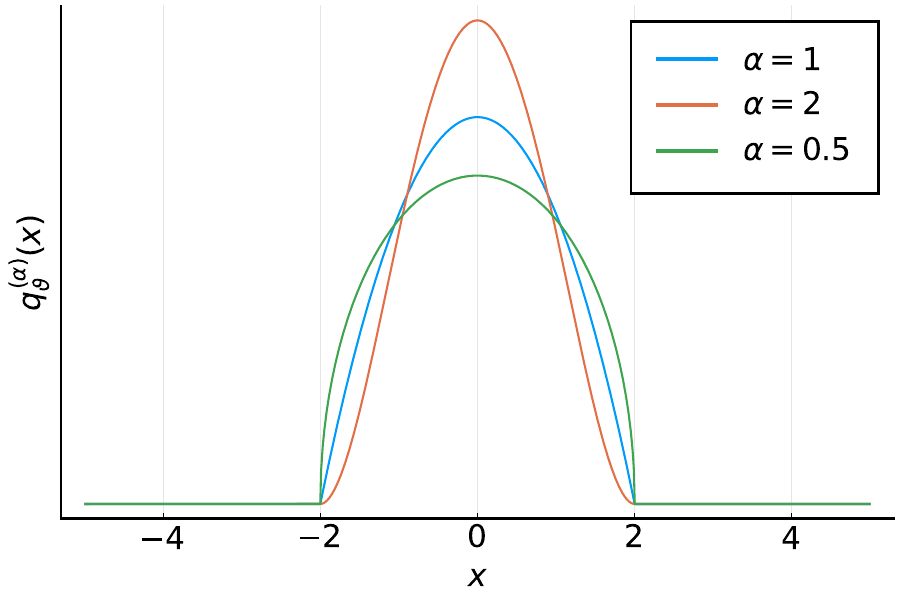}
        \caption{$\lambda=1$}
    \end{subfigure} 
    \caption{Plots of the densities $q_{\vartheta}^{(\alpha)}$, for the $\lambda$-exponential family obtained with sufficient statistics $T(x) = x^2$ and $\vartheta = 2$, for different values of $\lambda \in \{-1,0,1\}$ and $\alpha \in \{0.5,1,2\}$. When $\lambda = 0$, we recover a Gaussian distribution, while we obtain distributions with respectively heavier tails for $\lambda=-1$ and lighter tails for $\lambda > 0$. Also, values of $\alpha > 1$ lighten the tails while values $\alpha < 1$ make them heavier.}
    \label{fig:densitiesLambda}
\end{figure}

\subsection{$\lambda$-duality and proximal operators}
\label{subsection:lambdaDuality}

We now introduce elements of the concept of $\lambda$-duality, that will play an important role in our analysis of the considered optimization problems and the derivation of their optimality conditions. 

The $\lambda$-duality, initially introduced in \cite{wong2018, wong2022}, can be viewed as an extension of the usual convex duality~\cite{bauschke2011} (sometimes called Fenchel-Rockafellar duality). Let us remind that the convex duality relies on a coupling between primal and dual variables through the scalar product $\langle \cdot, \cdot \rangle$. This leads in particular to the notion of convex (or Fenchel) conjugate of a function $f : \mathcal{H} \rightarrow \bar{\mathbb{R}}$, defined at $v \in \mathcal{H}$ by
\begin{equation}
    \label{eq:fenchelConjugate}
    f^*(v) = \sup_{u \in \mathcal{H}} \langle u,v\rangle - f(u).
\end{equation}
Such conjugate can then used to define the subgradient of function $f$, by saying that $v$ is a subgradient of $f$ at $u$, denoted by $v \in \partial f(u)$, if and only if
\begin{equation}
    f^*(v) + f(u) = \langle u,v \rangle.
\end{equation}
One can then verify that $v \in \partial f(u)$ is equivalent to having that
\begin{equation}
    f(u') \geq f(u) + \langle v, u' - u \rangle,\,\forall u' \in \domain f,
\end{equation}
meaning that the right-hand side is a linear tangent minorant of $f$. The subdifferential can also be used to state optimality conditions through the Fermat rule \cite{bauschke2011}.

The $\lambda$-duality is constructed by replacing the scalar product of $\mathcal{H}$, appearing for instance in~\eqref{eq:fenchelConjugate}, by the non-linear coupling $c_{\lambda}(\cdot, \cdot)$ introduced in Equation \eqref{eq:coupling}. This leads to the definition of several mathematical notions, given hereafter.

\begin{definition}
    \label{def:conjugacy}
    Consider a proper function $f : \mathcal{H} \rightarrow \bar{\mathbb{R}}$ and $\lambda \in \mathbb{R}$.
    \begin{itemize}
        \item[$(i)$] We define its \emph{$c_{\lambda}$-conjugate} $f^{c_{\lambda}} : \mathcal{H} \rightarrow \bar{\mathbb{R}}$ by
        \begin{equation}
            \label{eq:lambda-conjugate-def}
            f^{c_{\lambda}}(v) = \sup_{u \in \mathcal{H}}  c_{\lambda}(u,v) - f(u).
        \end{equation}

        \item[$(ii)$] We say that $v \in \mathcal{H}$ is a \emph{$c_{\lambda}$-subgradient} of $f$ at $u$ and belongs to the \emph{$c_{\lambda}$-subdifferential} of $f$ at $u$, denoted by $\partial^{c_{\lambda}} f(u)$ if and only if
        \begin{equation}
            \label{eq:fenchelYoung}
            f^{c_{\lambda}}(v) + f(u) = c_{\lambda}(u,v).
        \end{equation}
    \end{itemize}

\end{definition}

As already mentioned, the above definitions correspond to generalizations of convex analysis theory. Similar constructions were achieved for instance in \cite{delara2020, lefranc2022} using the so-called CAPRA couplings, in \cite{fajardo2022} to study evenly convex functions, or in \cite{rachev1998} for general couplings in optimal transport. The standard notions of convexity have also been generalized by considering alternative notions of subgradients, such as in \cite{bednarczuk2022}. Let us relate this latter work to the notions introduced in Definition \ref{def:conjugacy}. Consider $f : \mathcal{H} \rightarrow \bar{\mathbb{R}}$, such that $v \in \partial^{c_{\lambda}} f(u)$. Equation \eqref{eq:fenchelYoung} can be rewritten in the following way:
\begin{align*}
    &f(u) + f^{c_{\lambda}}(v) = c_{\lambda}(u,v)\\
    \Leftrightarrow\, &c_{\lambda}(u, v) - f(u) \geq c_{\lambda}(u', v) - f(u'),\,\forall u' \in \domain f\\
    \Leftrightarrow\, &f(u') \geq f(u) + c_{\lambda}(u', v) - c_{\lambda}(u, v),\,\forall u' \in \domain f.
\end{align*}
This shows that $c_{\lambda}(\cdot, v)$ is a subgradient of $f$ at $u$ in the sense of the framework of abstract convexity, as outlined in \cite{bednarczuk2022} for instance.

Let us emphasize that Definition \ref{def:conjugacy} does not focus on the same objects than the ones in the study of \cite{wong2018, wong2022}. The latter also relies on $\lambda$-duality, but the so-called $\lambda$-gradient of $f$ is introduced before showing the fulfillment of Equation \eqref{eq:fenchelYoung}. This requires differentiability and regularity assumptions on $f$. We take the opposite direction in our Definition \ref{def:conjugacy}, as we define the $c_{\lambda}$-subdifferential assuming only the properness of $f$. As a consequence, we lose explicit expressions for $c_{\lambda}$-subgradients, while the $\lambda$-gradients in \cite{wong2018, wong2022} could be computed from the gradients of $f$. We will show in the following that Definition \ref{def:conjugacy} is sufficient to solve the considered optimization problems and that it is actually possible to exhibit $c_{\lambda}$-subgradients in our cases of interest, under mild hypotheses that are easy to check.

The above elements of $\lambda$-duality will be used subsequently to solve optimization problems of variational inference and maximum likelihood over a $\lambda$-exponential family providing explicit optimality conditions. We will also rely on proximal operators \cite{bauschke2011}, which are an essential tool for the algorithmic resolution of the considered problems. In order to fit the geometry induced by the $\lambda$-exponential family, we will rely on the Rényi proximal operator defined below.

\begin{definition}
    \label{def:proximalOperatorGen}
    Consider $\lambda \in \mathbb{R}$ such that $\alpha = 1 - \lambda$ is positive, the family $\mathcal{Q}_{\lambda}$ with $\lambda$-log-partition $\varphi_{\lambda}$, and an objective function $f : \mathcal{H} \rightarrow \bar{\mathbb{R}}$. Then the \emph{Rényi proximal operator} of $f$ with step-size $\tau > 0$ is defined by
    \begin{equation}
    \label{eq:proxRenyi}
        \prox_{\tau}^f(\vartheta') = \argmin_{\vartheta \in \domain \varphi_{\lambda}} f(\vartheta) + \frac{1}{\tau} RD_{\alpha}(q_{\vartheta'}, q_{\vartheta}).
    \end{equation} 
\end{definition}
When $\lambda = 0$, i.e.,~the $\lambda$-exponential family recovers the standard exponential family, the Rényi divergence appearing in the definition of $\prox_{\tau}^f$ reduces to the Kullback-Leibler divergence~\cite{nielsen2010}. In this case, $\prox_{\tau}^f$ can be seen as a Bregman proximal operator~\cite{bauschke2003} (see also \cite{guilmeau2022} for some examples of explicit Bregman proximal operators in the case of the exponential family). Note also that in \cite{kainth2022}, a proximal operator in the geometry defined by the Rényi divergence is mentioned but not studied.

In the following, we will refer to the operator \eqref{eq:proxRenyi} simply as proximal operator, except otherwise stated.

\section{Novel results on the $\lambda$-exponential family}
\label{section:firstResults}

In this section, we present a first set of novel results about the $\lambda$-exponential family, using the notion of $\lambda$-duality introduced in Definition \ref{def:conjugacy}. We first state our main assumptions and recover with our framework some known results including a key reformulation of the Rényi divergence in Section \ref{subsection:studyQ_lambda}. We then discuss the important example of Student distributions in Section \ref{subsection:studentDistributions}, before presenting in Section \ref{subsection:technicalOptResults} new technical optimality conditions that we will apply in subsequent sections to statistical problems.

\subsection{Assumptions and properties of the $\lambda$-exponential family}
\label{subsection:studyQ_lambda}

We now introduce our main assumptions and recover known results about the $\lambda$-exponential family under mild hypotheses, including a rewriting of the Rényi divergence in a way that will be crucial to solve statistical inference problems later on.

\begin{assumption}
    \label{assumption:expFamily}
    The $\lambda$-exponential family $\mathcal{Q}_{\lambda}$ is such that $\alpha = 1 - \lambda$ is positive and the function $\varphi_{\lambda}$ in \eqref{eq:lambdaLogPartition} is proper.
\end{assumption}

Assumption \ref{assumption:expFamily} implies in particular that $\domain \varphi_{\lambda} \neq \emptyset$ and that any $\vartheta \in \domain \varphi_{\lambda}$ is such that $q_{\vartheta}$ is well-defined and belongs to $\mathcal{P}(\mathcal{X},m)$. Note also that under Assumption \ref{assumption:expFamily}, $\varphi_{\lambda}$ cannot take the value $-\infty$, meaning in particular that, for any $\vartheta \in \domain \varphi_{\lambda}$, $S_{\vartheta} \neq \emptyset$. 

\begin{definition}
\label{def:compatible}
    Consider the $\lambda$-exponential family $\mathcal{Q}_{\lambda}$, the scalar $\alpha = 1 - \lambda$, and a probability density $p \in \mathcal{P}(\mathcal{X},m)$. We say that $p$ is \emph{$q_{\vartheta}$-compatible} for $q_{\vartheta} \in \mathcal{Q}_{\lambda}$ if the restriction of $p$ to the support of $q_{\vartheta}$, denoted by $S_{\vartheta}$, is such that $\int p_{|S_{\vartheta}}(x)^{\alpha}m(dx) \in (0, +\infty)$ and $\int T(x) p_{|S_{\vartheta}}(x)^{\alpha}m(dx)$ have finite components. If $p$ is $q_{\vartheta}$-compatible for any $q_{\vartheta} \in \mathcal{Q}_{\lambda}$, then we say that $p$ is \emph{$\mathcal{Q}_{\lambda}$-compatible}.
\end{definition}

The notion of compatibility in Definition \ref{def:compatible} is a technical condition that allows in particular to ensure the following well-posedness result.

\begin{lemma}
    \label{lemma:wellPosednessCouplingMoment}
    Consider the $\lambda$-exponential family $\mathcal{Q}_{\lambda}$, and $q_{\vartheta} \in \mathcal{Q}_{\lambda}$. Assume that Assumption \ref{assumption:expFamily} is satisfied and consider $\vartheta \in \domain \varphi_{\lambda}$ and $p \in \mathcal{P}(\mathcal{X},m)$. If $p$ is $q_{\vartheta}$-compatible, then $c_{\lambda}(\vartheta, p_{|S_{\vartheta}}^{(\alpha)}(T)) \in \mathbb{R}$.
\end{lemma}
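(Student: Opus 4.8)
The goal is to show that the quantity $c_{\lambda}(\vartheta, p_{|S_{\vartheta}}^{(\alpha)}(T))$ is a finite real number, where the expression $p_{|S_{\vartheta}}^{(\alpha)}(T)$ denotes $\int T(x)\, p_{|S_{\vartheta}}^{(\alpha)}(x)\, m(dx)$, the expectation of the sufficient statistics $T$ under the escort distribution of $p$ restricted to $S_{\vartheta}$. Recalling the definition \eqref{eq:coupling}, namely $c_{\lambda}(u,v) = \frac{1}{\lambda}\log(1 + \lambda \langle u,v\rangle)$ for $\lambda \neq 0$ and $c_0(u,v) = \langle u,v\rangle$, finiteness of $c_{\lambda}(\vartheta, v)$ for a vector $v \in \mathcal{H}$ amounts to two facts: first that $v$ is itself a well-defined element of $\mathcal{H}$ (i.e.\ has finite components), and second that the argument of the logarithm, $1 + \lambda\langle \vartheta, v\rangle$, is strictly positive (so that the $\log$ does not return $-\infty$ under the convention $\log(s) = -\infty$ for $s \le 0$). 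The case $\lambda = 0$ only requires the first fact.

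First I would unpack the $q_{\vartheta}$-compatibility hypothesis via Definition \ref{def:compatible}: it gives exactly that $\int p_{|S_{\vartheta}}(x)^{\alpha} m(dx) \in (0,+\infty)$ and that $\int T(x) p_{|S_{\vartheta}}(x)^{\alpha} m(dx)$ has finite components. Combining these with Definition \ref{def:escortProb}, the escort distribution $p_{|S_{\vartheta}}^{(\alpha)}$ is a well-defined probability density (its normalizing constant is finite and nonzero), and its $T$-expectation $p_{|S_{\vartheta}}^{(\alpha)}(T) = \left(\int p_{|S_{\vartheta}}(x)^{\alpha}m(dx)\right)^{-1}\int T(x) p_{|S_{\vartheta}}(x)^{\alpha}m(dx)$ is a finite element of $\mathcal{H}$ — a ratio of a finite-component vector by a strictly positive finite scalar. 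This settles the first fact and, in particular, closes the case $\lambda = 0$ immediately.

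The substantive step is the positivity of $1 + \lambda\langle \vartheta, p_{|S_{\vartheta}}^{(\alpha)}(T)\rangle$ when $\lambda \neq 0$. The key observation is that $p_{|S_{\vartheta}}^{(\alpha)}$ is supported inside $S_{\vartheta}$, and by Definition \ref{def:lambda-expFamily} every $x \in S_{\vartheta}$ satisfies $1 + \lambda\langle \vartheta, T(x)\rangle > 0$. Writing $1 + \lambda\langle \vartheta, p_{|S_{\vartheta}}^{(\alpha)}(T)\rangle = \int_{S_{\vartheta}}\bigl(1 + \lambda\langle \vartheta, T(x)\rangle\bigr)\, p_{|S_{\vartheta}}^{(\alpha)}(x)\, m(dx)$, using linearity of the integral and that $p_{|S_{\vartheta}}^{(\alpha)}$ integrates to one, the integrand is pointwise strictly positive on $S_{\vartheta}$; since $S_{\vartheta} \neq \emptyset$ (guaranteed by Assumption \ref{assumption:expFamily}, as noted just after it) and $p_{|S_{\vartheta}}^{(\alpha)}$ is a genuine probability density on $S_{\vartheta}$, the integral is strictly positive. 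Hence $1 + \lambda\langle \vartheta, p_{|S_{\vartheta}}^{(\alpha)}(T)\rangle > 0$ and $c_{\lambda}(\vartheta, p_{|S_{\vartheta}}^{(\alpha)}(T)) = \frac{1}{\lambda}\log\bigl(1 + \lambda\langle \vartheta, p_{|S_{\vartheta}}^{(\alpha)}(T)\rangle\bigr) \in \mathbb{R}$.

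The main obstacle — really the only delicate point — is the interchange of the scalar product with the integral in the displayed identity above, i.e.\ justifying $\langle \vartheta, p_{|S_{\vartheta}}^{(\alpha)}(T)\rangle = \int_{S_{\vartheta}} \langle \vartheta, T(x)\rangle\, p_{|S_{\vartheta}}^{(\alpha)}(x)\, m(dx)$ and the finiteness of this integral. This follows because $\mathcal{H}$ is finite-dimensional: writing $T$ in coordinates, each component $\int T_i(x) p_{|S_{\vartheta}}^{(\alpha)}(x) m(dx)$ is finite by the compatibility assumption (after dividing by the finite positive normalizer), so $\langle \vartheta, T(\cdot)\rangle$ is integrable against $p_{|S_{\vartheta}}^{(\alpha)}$ and the finite linear combination can be pulled out of the integral. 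One should also remark that the integrand $1 + \lambda\langle\vartheta,T(x)\rangle$ need not be bounded, but integrability (not boundedness) is all that is used, and that integrability is exactly what $q_{\vartheta}$-compatibility supplies. With these points addressed the proof is complete.
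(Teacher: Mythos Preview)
Your proof is correct and follows essentially the same approach as the paper: split off the trivial case $\lambda=0$, then for $\lambda\neq 0$ rewrite $1+\lambda\langle\vartheta,p_{|S_{\vartheta}}^{(\alpha)}(T)\rangle$ as the integral $\int(1+\lambda\langle\vartheta,T(x)\rangle)\,p_{|S_{\vartheta}}^{(\alpha)}(x)\,m(dx)$ and use compatibility to get that the escort density is well-defined (positivity) and that $p_{|S_{\vartheta}}^{(\alpha)}(T)$ is finite (finiteness). Your write-up is in fact more explicit than the paper's in justifying the interchange of $\langle\vartheta,\cdot\rangle$ with the integral via finite-dimensionality of $\mathcal{H}$.
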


\begin{proof}
    If $\lambda = 0$, $c_{\lambda}(\vartheta, p_{|S_{\vartheta}}^{(\alpha)}(T)) = \langle \vartheta, p_{|S_{\vartheta}}(T) \rangle$ and the result is straightforward. Now, consider $\lambda \neq 0$. The support of $q_{\vartheta}$ is the set $S_{\vartheta} = \{x \in \mathcal{X},\, 1 + \lambda \langle \vartheta, T(x) \rangle > 0\}$. Then we can compute
    \begin{equation}
        \label{eq:intermediateLemmaWellPosedness}
        1 + \lambda \langle \vartheta, p_{|S_{\vartheta}}^{(\alpha)}(T) \rangle = \int (1 + \lambda \langle \vartheta, T(x) \rangle ) p_{|S_{\vartheta}}^{(\alpha)}(x) m(dx).
    \end{equation}
    We get from the compatibility assumption that $p_{|S_{\vartheta}}^{(\alpha)}$ is well-defined and belongs to $\mathcal{P}(\mathcal{X},m)$. This ensures that the quantity in \eqref{eq:intermediateLemmaWellPosedness} is positive. Also by assumption, $p_{|S_{\vartheta}}^{(\alpha)}(T)$ is well-defined, ensuring that the quantity in \eqref{eq:intermediateLemmaWellPosedness} is also finite, hence the result.
\end{proof}

We now introduce an extra assumption stating that all the densities $q_{\vartheta} \in \mathcal{Q}_{\lambda}$ share the same support. In \cite{wong2022}, this property is also assumed and called the support condition. This assumption ensures that $q_{\vartheta}^{(\alpha)}(T)$ is well-defined for any $\vartheta \in \domain \varphi_{\lambda}$ and $\alpha = 1 - \lambda$ as we will show.

\begin{assumption}
    \label{assumption:supportCondition}
    There exists a non-empty set $S_{\lambda} \subset \mathcal{X}$ such that 
    \begin{equation}
        S_{\vartheta} = S_{\lambda},\,\forall \vartheta \in \domain \varphi_{\lambda},
    \end{equation}
    with $S_{\vartheta}$ being the support of $q_{\vartheta}$. Moreover, every $q_{\vartheta} \in \mathcal{Q}_{\lambda}$ is $\mathcal{Q}_{\lambda}$-compatible. 
\end{assumption}

We now state a property that links the coupling $c_{\lambda}$, the log-partition function $\varphi_{\lambda}$, and the Rényi divergence $RD_{\alpha}$. This technical property is used in the proof of a Rényi entropy maximization property in \cite{wong2022}, and we will exploit it further in our subsequent developments.

\begin{proposition}
\label{prop:rewritingRenyi}
    Consider the $\lambda$-exponential family $\mathcal{Q}_{\lambda}$ under Assumption \ref{assumption:expFamily} with $\alpha = 1-\lambda$. Consider a probability distribution $p \in \mathcal{P}(\mathcal{X}, m)$. For any $q_{\vartheta} \in \mathcal{Q}_{\lambda}$, $RD_{\alpha}(p, q_{\vartheta})$ satisfies
    \begin{equation}
    \label{eq:rewritingRenyi}
        RD_{\alpha}(p, q_{\vartheta}) = \varphi_{\lambda}(\vartheta) - c_{\lambda}(\vartheta, p^{(\alpha)}_{| S_{\vartheta}}(T))- H_{\alpha}(p_{| S_{\vartheta}}).
    \end{equation}
    Further, under Assumptions \ref{assumption:expFamily} and \ref{assumption:supportCondition}, for every $\vartheta' \in \domain \varphi_{\lambda}$,
    \begin{equation}
        \label{eq:rewritingRenyiLambdaExp}
        RD_{\alpha}(q_{\vartheta'}, q_{\vartheta}) = \varphi_{\lambda}(\vartheta) - c_{\lambda}(\vartheta, q_{\vartheta'}^{(\alpha)}(T))+ \psi_{\lambda}(\vartheta').
    \end{equation}
\end{proposition}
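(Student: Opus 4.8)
The plan is to expand the Rényi divergence directly from its definition and substitute the density formula \eqref{eq:lambdaExpFamilyDensity} of the $\lambda$-exponential family. Starting from
\[
RD_{\alpha}(p, q_{\vartheta}) = \frac{1}{\alpha - 1} \log \left( \int p(x)^{\alpha} q_{\vartheta}(x)^{1-\alpha} m(dx) \right),
\]
I would first observe that $q_{\vartheta}(x)^{1-\alpha} = q_{\vartheta}(x)^{\lambda} = 0$ whenever $x \notin S_{\vartheta}$ (using the convention $\exp(-\infty)=0$ and $\lambda = 1-\alpha$), so the integral reduces to one over $S_{\vartheta}$, and hence $p$ may be replaced by $p_{|S_{\vartheta}}$ up to the normalizing constant $\int_{S_{\vartheta}} p(x)^{\alpha} m(dx)$. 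Then on $S_{\vartheta}$ we have $q_{\vartheta}(x)^{1-\alpha} = \exp\big((1-\alpha)(c_{\lambda}(\vartheta,T(x)) - \varphi_{\lambda}(\vartheta))\big)$, and the key algebraic identity is $(1-\alpha)\,c_{\lambda}(\vartheta, T(x)) = \lambda \cdot \frac{1}{\lambda}\log(1 + \lambda\langle\vartheta,T(x)\rangle) = \log(1 + \lambda\langle\vartheta,T(x)\rangle)$, so that $\exp((1-\alpha)c_{\lambda}(\vartheta,T(x))) = 1 + \lambda\langle\vartheta,T(x)\rangle$, a quantity linear in $T(x)$. (The case $\lambda = 0$ must be handled separately, where $q_{\vartheta}(x)^{1-\alpha} = 1$ trivially and the formula collapses to the known exponential-family identity; I would dispatch this first.)

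Next I would carry the resulting expression through the logarithm. Writing $Z_{\alpha} = \int p_{|S_{\vartheta}}(x)^{\alpha} m(dx)$ and using $p(x)^\alpha = Z_\alpha\, p^{(\alpha)}_{|S_{\vartheta}}(x)$ on $S_\vartheta$ (Definition \ref{def:escortProb}), the integrand becomes $Z_\alpha\, p^{(\alpha)}_{|S_{\vartheta}}(x)\,(1 + \lambda\langle\vartheta,T(x)\rangle)\,\exp(-(1-\alpha)\varphi_{\lambda}(\vartheta))$. Pulling the constants out of the integral, $\int p^{(\alpha)}_{|S_{\vartheta}}(x)(1+\lambda\langle\vartheta,T(x)\rangle)m(dx) = 1 + \lambda\langle\vartheta, p^{(\alpha)}_{|S_{\vartheta}}(T)\rangle$, which by Lemma \ref{lemma:wellPosednessCouplingMoment} (applicable since $p$ $q_\vartheta$-compatible, which we may assume, else both sides are $+\infty$ — I should check this edge case) is a positive real number, equal to $\exp(\lambda\, c_{\lambda}(\vartheta, p^{(\alpha)}_{|S_{\vartheta}}(T)))$. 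So the integral inside the log equals $Z_\alpha \exp\big(\lambda\, c_\lambda(\vartheta, p^{(\alpha)}_{|S_\vartheta}(T)) - (1-\alpha)\varphi_\lambda(\vartheta)\big)$. Taking $\frac{1}{\alpha-1}\log$ and using $\alpha - 1 = -\lambda$, the $\varphi_\lambda$ term gives $+\varphi_\lambda(\vartheta)$, the coupling term gives $-c_\lambda(\vartheta,p^{(\alpha)}_{|S_\vartheta}(T))$, and $\frac{1}{\alpha-1}\log Z_\alpha = \frac{1}{1-\alpha}\log\big(\int p_{|S_\vartheta}(x)^\alpha m(dx)\big) = H_\alpha(p_{|S_\vartheta})$ times $(-1)$... more carefully, $\frac{1}{\alpha-1}\log Z_\alpha = -\frac{1}{1-\alpha}\log Z_\alpha = -H_\alpha(p_{|S_\vartheta})$, which matches \eqref{eq:rewritingRenyi}.

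For the second identity \eqref{eq:rewritingRenyiLambdaExp}, I would simply specialize $p = q_{\vartheta'}$ for $\vartheta' \in \domain\varphi_\lambda$. Under Assumptions \ref{assumption:expFamily} and \ref{assumption:supportCondition}, $S_{\vartheta'} = S_\lambda = S_\vartheta$, so $q_{\vartheta'|S_\vartheta} = q_{\vartheta'}$ and $q_{\vartheta'}$ is $\mathcal{Q}_\lambda$-compatible, so Lemma \ref{lemma:wellPosednessCouplingMoment} applies and $q_{\vartheta'}^{(\alpha)}(T)$ is well-defined; moreover $H_\alpha(q_{\vartheta'|S_\vartheta}) = H_\alpha(q_{\vartheta'}) = -\psi_\lambda(\vartheta')$ by the definition \eqref{eq:entropyFunctionDef} of the entropy function. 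Substituting into \eqref{eq:rewritingRenyi} gives \eqref{eq:rewritingRenyiLambdaExp} directly.

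I expect the main obstacle to be the careful bookkeeping of integrability and the edge cases: verifying that when $p$ is \emph{not} $q_\vartheta$-compatible the left-hand side of \eqref{eq:rewritingRenyi} is indeed $+\infty$ (so the identity holds vacuously in $\bar{\mathbb{R}}$), checking the sign conventions around $\alpha - 1 = -\lambda < 0$ versus $> 0$, and making sure the restriction-to-support step is rigorously justified by the $\exp(-\infty) = 0$ convention rather than glossed over. The algebra itself — the cancellation $(1-\alpha)c_\lambda = \log(1+\lambda\langle\cdot,\cdot\rangle)$ and re-exponentiating the escort moment into a coupling — is the conceptual heart but is short once the conventions are fixed.
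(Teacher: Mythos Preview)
Your proposal is correct and follows essentially the same route as the paper: handle $\lambda=0$ separately, then for $\lambda\neq 0$ expand $q_\vartheta^{1-\alpha}$, use the identity $\exp\big((1-\alpha)c_\lambda(\vartheta,T(x))\big)=1+\lambda\langle\vartheta,T(x)\rangle$, factor the integral into $Z_\alpha$ times $1+\lambda\langle\vartheta,p^{(\alpha)}_{|S_\vartheta}(T)\rangle$, and read off the three terms after taking $\frac{1}{\alpha-1}\log$; the second identity is then the specialization $p=q_{\vartheta'}$ under the support condition. One small caveat: your justification that $q_\vartheta(x)^{1-\alpha}=0$ off $S_\vartheta$ is only literally true when $\lambda>0$ (for $\lambda<0$ one has $0^{\lambda}=+\infty$, which forces $RD_\alpha=+\infty$ if $p$ has mass outside $S_\vartheta$), but the paper simply restricts to $S_\vartheta$ without comment, so this does not affect the comparison.
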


\begin{proof}
When $\lambda = 0$, recall that $q_{\vartheta}$ has full support. In this case, we have
\begin{align*}
    RD_{\alpha}(p, q_{\vartheta}) &= KL(p, q_{\vartheta})\\
    &= \int \log \left( \frac{p(x)}{q_{\vartheta}(x)} \right) p(x) m(dx),
\end{align*}
from which we can straightforwardly obtain the result using that $c_{\lambda}(\cdot, \cdot) = \langle \cdot, \cdot \rangle$ for $\lambda = 0$ and $\alpha = 1$.

For $\lambda \neq 0$, we compute the Rényi divergence $RD_{\alpha}$ (defined in Definition \ref{def:divergences}) between $p$ and $q_{\vartheta}$. Using the definitions of $q_{\vartheta}$ given in Definition \ref{def:lambda-expFamily}, of the Rényi entropy $H_{\alpha}$ given in Definition \ref{def:entropy}, and of the coupling $c_{\lambda}$ from Equation \eqref{eq:coupling}, we obtain the following result.
\begin{align*}
    RD_{\alpha}(p, q_{\vartheta}) 
    &= \frac{1}{\alpha - 1} \log \left( \int p_{|S_{\vartheta}}(x)^{\alpha} q_{\vartheta}(x)^{1-\alpha}m(dx)\right)\\
    &= \frac{1}{\alpha - 1} \log \left( \int p_{|S_{\vartheta}}(x)^{\alpha} \exp( (1-\alpha) c_{\lambda}(\vartheta, T(x)) - (1-\alpha)\varphi_{\lambda}(\vartheta) )  \nu(dx) \right)\\
    &= \frac{1}{\alpha - 1} \log \left( \int p_{|S_{\vartheta}}(x)^{\alpha} \exp( \lambda c_{\lambda}(\vartheta, T(x)))  \nu(dx) \right) + \varphi_{\lambda}(\vartheta)\\
    &= \frac{1}{\alpha - 1} \log \left( \int p_{|S_{\vartheta}}(x)^{\alpha} (1 + \lambda \langle \vartheta, T(x)\rangle) \nu(dx) \right) + \varphi_{\lambda}(\vartheta)\\
    &= \frac{1}{\alpha - 1} \log \left( \int p_{| S_{\vartheta}}(x)^{\alpha}\nu(dx) \left( 1 + \lambda \langle \vartheta, p^{(\alpha)}_{| S_{\vartheta}}(T)\rangle \right) \right) + \varphi_{\lambda}(\vartheta)\\
    &= \frac{1}{\alpha - 1} \log \left( \int p_{| S_{\vartheta}}(x)^{\alpha}\nu(dx) \right) + \frac{1}{\alpha - 1} \log \left( 1 + \lambda \langle \vartheta, p_{| S_{\vartheta}}^{(\alpha)}(T)\rangle \right) + \varphi_{\lambda}(\vartheta)\\
    &= - H_{\alpha}(p_{| S_{\vartheta}}) - c_{\lambda}(\vartheta, p^{(\alpha)}_{| S_{\vartheta}}(T)) + \varphi_{\lambda}(\vartheta),
\end{align*}
which proves the first part of the property in Equation \eqref{eq:rewritingRenyi}. The second part  in Equation \eqref{eq:rewritingRenyiLambdaExp} follows using the assumptions and Equation \eqref{eq:entropyFunctionDef}.
\end{proof}

We establish a second property, describing the $\lambda$-duality objects associated to $\mathcal{Q}_{\lambda}$ in terms of moments and entropy and recovering the results of \cite{wong2022} in our framework.

\begin{proposition}
    \label{prop:gradientMoment}
    Suppose that Assumptions \ref{assumption:expFamily} and \ref{assumption:supportCondition} are satisfied. Then, for every $\vartheta \in \domain \varphi_{\lambda}$, 
    \begin{align}
        \varphi_{\lambda}^{c_{\lambda}}(q_{\vartheta}^{(\alpha)}(T)) &= \psi_{\lambda}(\vartheta),\label{eq:entropyConjugate}\\
        q_{\vartheta}^{(\alpha)}(T) &\in \partial^{c_{\lambda}} \varphi_{\lambda}(\vartheta).\label{eq:escortMomentSubgradient}
    \end{align}
\end{proposition}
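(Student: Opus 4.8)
The plan is to read off both identities directly from the reformulation~\eqref{eq:rewritingRenyiLambdaExp} of the Rényi divergence established in Proposition~\ref{prop:rewritingRenyi}, using only the two elementary facts that $RD_{\alpha}\geq 0$ and $RD_{\alpha}(q,q)=0$; no further integral computation is needed. First I would record that, for every $\vartheta\in\domain\varphi_{\lambda}$, Assumptions~\ref{assumption:expFamily} and~\ref{assumption:supportCondition} make $q_{\vartheta}^{(\alpha)}(T)$ well defined and $\psi_{\lambda}(\vartheta)=-H_{\alpha}(q_{\vartheta})$ a finite real, that $c_{\lambda}(\vartheta,q_{\vartheta}^{(\alpha)}(T))\in\mathbb{R}$ by Lemma~\ref{lemma:wellPosednessCouplingMoment} applied with $p=q_{\vartheta}$ (and $S_{\vartheta}=S_{\lambda}$, so that $q_{\vartheta|S_{\vartheta}}=q_{\vartheta}$), and that $\varphi_{\lambda}$ is proper and never equals $-\infty$.

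The core step is to compute $\varphi_{\lambda}^{c_{\lambda}}\big(q_{\vartheta'}^{(\alpha)}(T)\big)$ for a fixed $\vartheta'\in\domain\varphi_{\lambda}$. Rearranging~\eqref{eq:rewritingRenyiLambdaExp} gives, for every $\vartheta\in\domain\varphi_{\lambda}$,
\begin{equation*}
    c_{\lambda}\big(\vartheta,q_{\vartheta'}^{(\alpha)}(T)\big)-\varphi_{\lambda}(\vartheta)=\psi_{\lambda}(\vartheta')-RD_{\alpha}(q_{\vartheta'},q_{\vartheta})\leq\psi_{\lambda}(\vartheta'),
\end{equation*}
with equality at $\vartheta=\vartheta'$. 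Since $\varphi_{\lambda}(\vartheta)=+\infty$ off $\domain\varphi_{\lambda}$, the supremum in~\eqref{eq:lambda-conjugate-def} defining the $c_{\lambda}$-conjugate may be restricted to $\domain\varphi_{\lambda}$ (using the convention $(+\infty)+(-\infty)=-\infty$ for the terms outside the domain), so the displayed line yields $\varphi_{\lambda}^{c_{\lambda}}\big(q_{\vartheta'}^{(\alpha)}(T)\big)=\psi_{\lambda}(\vartheta')$; renaming $\vartheta'$ as $\vartheta$ proves~\eqref{eq:entropyConjugate}.

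For~\eqref{eq:escortMomentSubgradient}, by Definition~\ref{def:conjugacy}(ii) I only need the Fenchel--Young-type identity $\varphi_{\lambda}^{c_{\lambda}}\big(q_{\vartheta}^{(\alpha)}(T)\big)+\varphi_{\lambda}(\vartheta)=c_{\lambda}\big(\vartheta,q_{\vartheta}^{(\alpha)}(T)\big)$: its left-hand side equals $\psi_{\lambda}(\vartheta)+\varphi_{\lambda}(\vartheta)$ by~\eqref{eq:entropyConjugate}, while evaluating~\eqref{eq:rewritingRenyiLambdaExp} at $\vartheta'=\vartheta$ (where $RD_{\alpha}=0$) gives exactly $c_{\lambda}\big(\vartheta,q_{\vartheta}^{(\alpha)}(T)\big)=\varphi_{\lambda}(\vartheta)+\psi_{\lambda}(\vartheta)$, so the two sides coincide. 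The argument is uniform in $\lambda$, and for $\lambda=0$ it recovers the classical facts $\varphi_{0}^{*}(q_{\vartheta}(T))=-H_{1}(q_{\vartheta})$ and $q_{\vartheta}(T)\in\partial\varphi_{0}(\vartheta)$ for the standard exponential family. There is no real obstacle here beyond the mildly delicate point of justifying that the supremum defining the $c_{\lambda}$-conjugate reduces to $\domain\varphi_{\lambda}$ and keeping track of infinite values; everything else is bookkeeping around Proposition~\ref{prop:rewritingRenyi}.
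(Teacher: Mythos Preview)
Your proof is correct and follows essentially the same route as the paper: both arguments rearrange the identity of Proposition~\ref{prop:rewritingRenyi} (specifically~\eqref{eq:rewritingRenyiLambdaExp}), use non-negativity of $RD_{\alpha}$ with equality at $\vartheta=\vartheta'$ to read off the $c_{\lambda}$-conjugate, and then plug this into the Fenchel--Young identity at $\vartheta'=\vartheta$ to obtain the subgradient inclusion. Your version is slightly more explicit about well-definedness and about restricting the supremum to $\domain\varphi_{\lambda}$, but the substance is identical.
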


\begin{proof}
    We denote $\eta = q_{\vartheta}^{(\alpha)}(T)$ for sake of concision. We begin with the proof for \eqref{eq:entropyConjugate}. Using the result of Proposition \ref{prop:rewritingRenyi} and the non-negativity of the Rényi divergence,  
    \begin{equation}
        c_{\lambda}(\vartheta', \eta) - \varphi_{\lambda}(\vartheta') \leq \psi_{\lambda}(\vartheta),
    \end{equation}
    with equality if and only if $\vartheta' = \vartheta$. This shows that $\varphi_{\lambda}^{c_{\lambda}}(\eta) = \psi_{\lambda}(\vartheta)$ following Equation \eqref{eq:lambda-conjugate-def}, hence the result.

    We now turn to the proof for \eqref{eq:escortMomentSubgradient}. Consider the rewriting of the Rényi divergence from Proposition \ref{prop:rewritingRenyi}:
    \begin{equation}
        \label{eq:conjugateSubgradInter}
        0 = \varphi_{\lambda}(\vartheta) - c_{\lambda}(\vartheta, q_{\vartheta}^{(\alpha)}(T)) + \psi_{\lambda}(\vartheta).
    \end{equation}
    Then, by Equation \eqref{eq:entropyConjugate}, Equation \eqref{eq:conjugateSubgradInter} can be written as $\varphi_{\lambda}(\vartheta) + \varphi_{\lambda}^{c_{\lambda}}(\eta) = c_{\lambda}(\vartheta, \eta)$. This concludes the proof, using Equation \eqref{eq:fenchelYoung}.
\end{proof}

\begin{remark}
Assumption \ref{assumption:supportCondition} and Proposition \ref{prop:gradientMoment} ensure that, for every $\vartheta \in \domain \varphi_{\lambda}$, $q_{\vartheta}^{(\alpha)}(T)$ is well-defined and thus that $\partial^{c_{\lambda}} \varphi_{\lambda}(\vartheta)$ is non-empty. This can be viewed as a form of convexity result on $\varphi_{\lambda}$. Indeed, for $\lambda = 0$, which corresponds to the classical Fenchel duality theory, having a non-empty subdifferential at every point of $\domain \varphi$ implies that $\varphi(\vartheta) = \varphi^{**}(\vartheta)$ on $\domain \varphi$ \cite[Proposition 16.4]{bauschke2011}. This last equality shows that $\varphi$ is equal to its biconjugate and hence that it is convex.
\end{remark}

\subsection{The example of Student distributions}
\label{subsection:studentDistributions}

We now show that the Student distributions can be seen as a particular example of the $\lambda$-exponential family that satisfies the assumptions outlined in Section \ref{section:firstResults} and whose escort distributions have easily computable moments. This means that Student distributions will be an importance use-case of our coming theoretical results of Section \ref{section:applicationsStatisticalProblems}, as we will illustrate on numerical experiments in Section \ref{section:numerics}. Student distributions form an important class of distributions arising in several applications from statistics and signal processing \cite{peel2000, chantas2008, gelman2008, cappe2008, laus2018, wang2022}.

\begin{definition}
    \label{def:StudentDistributions}
    Consider the family of \emph{multivariate Student distributions} on $\mathbb{R}^d$ with fixed degree of freedom parameter $\nu > 0$. We denote this family by $\mathcal{T}^d_{\nu}$. Densities with respect to the Lebesgue measure are of the form
        \begin{equation}
            q_{\mu, \Sigma}(x) = \frac{1}{Z_{\nu}} \det(\Sigma)^{-\frac{1}{2}} \left( 1 + \frac{1}{\nu} (x-\mu)^{\top}\Sigma^{-1}(x-\mu) \right)^{- \frac{\nu + d}{2}},\, \forall x \in \mathbb{R}^d
        \end{equation}
    with location parameter $\mu \in \mathbb{R}^d$, scale matrix $\Sigma \in \mathcal{S}_{++}^d$, and normalization constant $Z_{\nu} = \frac{\Gamma(\nu/2) \nu^{d/2} \pi^{d/2}}{\Gamma((\nu+d)/2)}$ where $\Gamma$ denotes the Gamma function.
\end{definition}

The degree of freedom parameter $\nu > 0$ controls the tail behavior of the distributions. In particular, higher values of $\nu$ lead to distributions in $\mathcal{T}_{\nu}^d$ with lighter (but still heavy) tails, with the limit $\nu \rightarrow +\infty$ corresponding to the family of Gaussian distributions, which is an example of the exponential family. On the contrary, distributions in $\mathcal{T}_{\nu}^d$ for low $\nu$ have heavier tails, an example being that $\mathcal{T}_1^d$ is the family of multivariate Cauchy distributions. In particular, distributions in $\mathcal{T}_{\nu}^d$ have well-defined first order moments if $\nu > 1$ and well-defined second order moments if $\nu > 2$.

The next proposition shows that the $\lambda$-exponential family, with sufficient statistics being the first and second order moments, is the family of Student distribution when $\lambda < 0$ and that it satisfies Assumption \ref{assumption:expFamily}. We further compute the escort moments of Student distributions, which are $c_{\lambda}$-subgradients of $\varphi_{\lambda}$. We also compute the Rényi entropy of Student distributions, which is the $c_{\lambda}$-conjugate of $\varphi_{\lambda}$. Finally, we describe the distributions that are compatible with the Student distributions (following Definition \ref{def:compatible}) and show that Student distributions satisfy Assumption \ref{assumption:supportCondition}.

\begin{proposition}
    \label{prop:studentFamily}
    Consider the Student family $\mathcal{T}_{\nu}^d$.
    \begin{itemize}
        \item[$(i)$] The family $\mathcal{T}^d_{\nu}$ is an instance of the $\lambda$-exponential family (see Equation \eqref{eq:lambdaExpFamilyDensity}) for $\lambda = - \frac{2}{\nu + d}$ and with sufficient statistics $T(x) = (x, xx^{\top})$. Its natural parameters are $\vartheta = (\vartheta_1, \vartheta_2) \in \mathbb{R}^d \times \mathcal{S}_{--}^d$. It satisfies Assumption \ref{assumption:expFamily} and $\domain \varphi_{\lambda} = \{(\vartheta_1,\vartheta_2) \in \mathbb{R}^d \times \mathcal{S}_{--}^d,\,2(\nu+d) + \vartheta_1^{\top}\vartheta_2^{-1}\vartheta_1 > 0\}$.

        \item[$(ii)$] Consider $\alpha = 1 + \frac{\nu + d}{2}$. Then, for any $q_{\mu, \Sigma} \in \mathcal{T}_{\nu}^d$, $q_{\mu, \Sigma}^{(\alpha)}(T) = (q_{\mu, \Sigma}^{(\alpha)}(x), q_{\mu, \Sigma}^{(\alpha)}(xx^{\top}))^{\top}$ is such that
        \begin{equation}
            \begin{cases}
                q_{\mu, \Sigma}^{(\alpha)}(x) = \mu,\\
                q_{\mu, \Sigma}^{(\alpha)}(xx^{\top}) = \Sigma + \mu \mu^{\top}.
            \end{cases}
        \end{equation}
        The mapping $\vartheta \longmapsto q_{\vartheta}^{(\alpha)}(T)$ is a bijection from $\domain \varphi_{\lambda}$ to $\mathbb{R}^d \times \mathcal{S}_{++}^d$. Moreover, $\psi_{\lambda}(\vartheta) = \frac{1}{2} \logdet(\Sigma) + C$, where $C$ is a scalar depending only on $\nu$ and $d$.

        \item[$(iii)$] The $\mathcal{T}_{\nu}^d$-compatible distributions are the probability densities $p \in \mathcal{P}(\mathcal{X},dx)$ such that $p^{(\alpha)}$ has finite first and second order moments. The family $\mathcal{T}_{\nu}^d$, seen as a $\lambda$-exponential family, satisfies Assumption \ref{assumption:supportCondition}. 
    \end{itemize}
\end{proposition}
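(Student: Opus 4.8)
All three items reduce to explicit computations with the Student density, and the plan is to carry them out one at a time after identifying the right reparametrization.

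\textbf{Item $(i)$.} I would start from the density of Definition~\ref{def:StudentDistributions} and bring it to the form \eqref{eq:lambdaExpFamilyDensity}. With $T(x)=(x,xx^{\top})$ the pairing is $\langle\vartheta,T(x)\rangle=\vartheta_1^{\top}x+x^{\top}\vartheta_2 x$, so for $\vartheta_2\in\mathcal{S}^d_{--}$ completing the square gives $1+\lambda\langle\vartheta,T(x)\rangle=K+\lambda(x-\mu)^{\top}\vartheta_2(x-\mu)$ with $\mu=-\tfrac12\vartheta_2^{-1}\vartheta_1$ and $K=1-\tfrac{\lambda}{4}\vartheta_1^{\top}\vartheta_2^{-1}\vartheta_1$. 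Choosing $\lambda=-\tfrac{2}{\nu+d}$ makes $1/\lambda$ equal to the Student exponent $-\tfrac{\nu+d}{2}$, and matching quadratic forms forces $\Sigma=\tfrac{K}{\lambda\nu}\vartheta_2^{-1}$, which is positive definite exactly when $K>0$, i.e.\ when $2(\nu+d)+\vartheta_1^{\top}\vartheta_2^{-1}\vartheta_1>0$; conversely the correspondence is inverted explicitly from any $(\mu,\Sigma)\in\mathbb{R}^d\times\mathcal{S}^d_{++}$ via $\vartheta_2=\tfrac{K}{\lambda\nu}\Sigma^{-1}$, $\vartheta_1=-2\vartheta_2\mu$, $K=\nu/(\nu+\mu^{\top}\Sigma^{-1}\mu)>0$. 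This identifies $\mathcal{T}^d_{\nu}$ with $\mathcal{Q}_{\lambda}$ on the stated parameter set. To get $\domain\varphi_{\lambda}$ \emph{exactly} I would then show that the integral in \eqref{eq:lambdaLogPartition} diverges when $K\le0$: there $1+\lambda\langle\vartheta,T(x)\rangle\le0$ on a set of positive Lebesgue measure on which, since $\lambda<0$, the conventions give $\exp(c_{\lambda}(\vartheta,T(x)))=+\infty$, whereas for $K>0$ the integrand is an integrable Student profile (integrability at infinity using $\nu>0$). This also makes $\varphi_{\lambda}$ finite on its nonempty domain, hence proper, and $\alpha=1-\lambda>0$, so Assumption~\ref{assumption:expFamily} holds; comparing normalizing constants moreover yields $\varphi_{\lambda}(\vartheta)=\log Z_{\nu}+\tfrac12\logdet\Sigma+\tfrac1\lambda\log K$.

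\textbf{Item $(ii)$.} I would use the escort exponent $\alpha=1-\lambda$ of Definition~\ref{def:lambda-expFamily}, which for $\lambda=-\tfrac{2}{\nu+d}$ equals $1+\tfrac{2}{\nu+d}$. Since $q^{(\alpha)}_{\mu,\Sigma}\propto q_{\mu,\Sigma}^{\alpha}\propto\bigl(1+\tfrac1\nu(x-\mu)^{\top}\Sigma^{-1}(x-\mu)\bigr)^{-\alpha(\nu+d)/2}$, the escort is again a Student density, with location $\mu$, degree of freedom $\tilde\nu=\alpha(\nu+d)-d=\nu+2$, and scale $\tilde\Sigma=\tfrac{\nu}{\tilde\nu}\Sigma$; the standard Student moment formulas, applicable because $\tilde\nu=\nu+2>2$ for every $\nu>0$, then give $q^{(\alpha)}_{\mu,\Sigma}(x)=\mu$ and $q^{(\alpha)}_{\mu,\Sigma}(xx^{\top})=\tfrac{\tilde\nu}{\tilde\nu-2}\tilde\Sigma+\mu\mu^{\top}=\Sigma+\mu\mu^{\top}$. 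The map $\vartheta\mapsto q^{(\alpha)}_{\vartheta}(T)$ factors as $\vartheta\mapsto(\mu,\Sigma)\mapsto(\mu,\Sigma+\mu\mu^{\top})$, the first arrow being the bijection of item~$(i)$ and the second having explicit inverse $(\eta_1,\eta_2)\mapsto(\eta_1,\eta_2-\eta_1\eta_1^{\top})$, which yields the claimed bijectivity onto the corresponding moment set. Finally, recognizing the escort as a rescaled Student also gives $\int q_{\mu,\Sigma}^{\alpha}\,dx=C_1\det(\Sigma)^{(1-\alpha)/2}$ for a constant $C_1$ depending only on $\nu$ and $d$; plugging this into the definition of $H_{\alpha}$ (or, equivalently, setting $\vartheta'=\vartheta$ in \eqref{eq:rewritingRenyiLambdaExp} and using the values of $\varphi_{\lambda}$ and of the escort moments above) shows that $\psi_{\lambda}(\vartheta)=-H_{\alpha}(q_{\vartheta})$ is, up to an additive constant depending only on $\nu$ and $d$, a fixed scalar multiple of $\logdet\Sigma$.

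\textbf{Item $(iii)$ and the main obstacle.} Since item~$(i)$ gives $S_{\vartheta}=\mathbb{R}^d$ for all $\vartheta$, for any density $p$ the restriction $p_{|S_{\vartheta}}$ is just $p$ and does not depend on $\vartheta$, so Definition~\ref{def:compatible} collapses to the single requirement that $\int p^{\alpha}\,dx\in(0,+\infty)$ and $\int T(x)p(x)^{\alpha}\,dx$ has finite components, which, after dividing by the finite normalizer, is exactly that $p^{(\alpha)}$ is well-defined with finite first and second order moments. Assumption~\ref{assumption:supportCondition} then follows: the common support is $S_{\lambda}=\mathbb{R}^d$, and every $q_{\mu,\Sigma}\in\mathcal{T}^d_{\nu}$ is $\mathcal{T}^d_{\nu}$-compatible because, by item~$(ii)$, its escort is the Student distribution with degree of freedom $\nu+2$, which has finite first and second moments. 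The main obstacle is item~$(i)$: the change of variables and the value of $\lambda$ are routine, but determining $\domain\varphi_{\lambda}$ exactly requires care with the sign of $K$ and with the $\log/\exp$ conventions when $\lambda<0$. The recurring conceptual point, underlying $(ii)$ and $(iii)$, is that the escort exponent $\alpha=1-\lambda$ raises a Student's degrees of freedom from $\nu$ to $\nu+2$, which is precisely why its escort always has two finite moments even when $q_{\vartheta}$ itself (for instance a Cauchy) does not.
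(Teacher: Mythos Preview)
Your proposal is correct and follows essentially the same computational route as the paper: identify the Student density with the $\lambda$-exponential form via the explicit change of variables $(\mu,\Sigma)\leftrightarrow(\vartheta_1,\vartheta_2)$, recognize the escort $q_{\mu,\Sigma}^{(\alpha)}$ as a Student with $\nu+2$ degrees of freedom to read off its moments, and deduce compatibility and the support condition from $S_\vartheta=\mathbb{R}^d$. The only stylistic differences are that you run the identification in item~$(i)$ in the opposite direction (from the $\lambda$-exponential form toward the Student density, rather than expanding the Student density as the paper does), and that your item~$(iii)$ appeals directly to Definition~\ref{def:compatible} instead of verifying by hand, as the paper does, that $1+\lambda\langle\vartheta,p^{(\alpha)}(T)\rangle>0$.

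One small loose end in your item~$(i)$: your divergence argument for $K\le0$ works when $K<0$, since $1+\lambda\langle\vartheta,T(x)\rangle=K+\lambda(x-\mu)^\top\vartheta_2(x-\mu)$ is then negative on a neighborhood of $\mu$; but at $K=0$ this expression vanishes only at the single point $x=\mu$ and is positive elsewhere, so the ``positive-measure'' claim fails. The integral still diverges there, because near $\mu$ the integrand behaves like $|x-\mu|^{-(\nu+d)}$, which is not locally integrable in $\mathbb{R}^d$. The paper's proof does not treat this boundary case either (it simply reads off $\domain\varphi_\lambda$ from the formula derived under $K>0$), so this is a shared minor gap rather than a defect of your approach.
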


\begin{proof}
    The proof is deferred to Appendix \ref{appendix:proof}.
\end{proof}

\begin{remark}
    Proposition \ref{prop:studentFamily} generalizes analogous results for Gaussian distributions. Indeed, Gaussian distributions in dimension $d$, denoted by $\mathcal{G}^d$, form an example of the exponential family with sufficient statistics $T(x) = (x, xx^{\top})$, satisfying Assumptions \ref{assumption:expFamily} and \ref{assumption:supportCondition}. Furthermore, for any $q_{\mu, \Sigma} \in \mathcal{G}^d$ with $\mu \in \mathbb{R}^d, \Sigma \in \mathcal{S}^d_{++}$, we have 
    \begin{equation}
        \begin{cases}
            q_{\mu, \Sigma}(x) = \mu,\\
            q_{\mu, \Sigma}(xx^{\top}) = \Sigma + \mu \mu^{\top}.
        \end{cases}
    \end{equation}
    Finally, the $\mathcal{G}^d$-compatible distributions are the probability densities in $\mathcal{P}(\mathcal{X},dx)$ with finite first and second order moments. While the Gaussian case corresponds to $\lambda = 0$, we remark that the case $\lambda > 0$ corresponds to the $\beta$-Gaussians distributions discussed in \cite{martins2022}. However, these distributions do not satisfy the support condition of Assumption \ref{assumption:supportCondition}.
\end{remark}

We now establish some novel properties that state how two families of Student distributions with different degree of freedom parameters relate to each other, including the computation of some escort moments and compatibility conditions. This provides a mechanism to construct an escort distribution with lighter tails than the original distribution.

\begin{proposition}
    \label{prop:escortStudent}
    Let $p \in \mathcal{T}_{\nu_p}^d$ a Student distribution with dimension $d$, location $\mu_p$ and shape $\Sigma_p$. Set $\nu>0$ and consider the Student family $\mathcal{T}_{\nu}^d$ with associated $\alpha = 1 + \frac{2}{\nu+d}$. Then, the distribution $p$ is $\mathcal{T}_{\nu}^d$-compatible if and only if $\nu_p + 2 \frac{\nu_p + d}{\nu +d} > 2$, and the escort probability $p^{(\alpha)}$ is a Student distribution with $\nu^{(\alpha)}$ degrees of freedom, location $\mu^{(\alpha)}$, and shape $\Sigma^{(\alpha)}$ such that
    \begin{equation}
        \begin{cases}
            \nu^{(\alpha)} &= \nu_p + 2 \frac{\nu_p + d}{\nu + d},\\
            \mu^{(\alpha)} &= \mu_p,\\
            \Sigma^{(\alpha)} &= \frac{\nu_p}{\nu^{(\alpha)}} \Sigma_p.
        \end{cases}
    \end{equation} 
\end{proposition}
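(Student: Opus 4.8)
The plan is to use the fact that the Student family is stable under raising densities to a positive power, up to renormalization, so that the escort $p^{(\alpha)}$ can be identified by inspection, and then to read off the compatibility condition from the explicit degree of freedom of this escort. Concretely, I would write the density of $p\in\mathcal{T}_{\nu_p}^d$ as in Definition \ref{def:StudentDistributions} and raise it to the power $\alpha>0$: discarding every factor not depending on $x$, $p(x)^{\alpha}$ is proportional to $\bigl(1+\tfrac{1}{\nu_p}(x-\mu_p)^{\top}\Sigma_p^{-1}(x-\mu_p)\bigr)^{-\alpha(\nu_p+d)/2}$, which already has the functional form of an unnormalized Student density. Matching it against the generic shape $\bigl(1+\tfrac{1}{\nu^{\star}}(x-\mu^{\star})^{\top}(\Sigma^{\star})^{-1}(x-\mu^{\star})\bigr)^{-(\nu^{\star}+d)/2}$ forces, on the exponents, $\nu^{\star}=\alpha(\nu_p+d)-d$, and, on the inner quadratic form, $\mu^{\star}=\mu_p$ together with $\tfrac{1}{\nu^{\star}}(\Sigma^{\star})^{-1}=\tfrac{1}{\nu_p}\Sigma_p^{-1}$, i.e.\ $\Sigma^{\star}=\tfrac{\nu_p}{\nu^{\star}}\Sigma_p$. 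Since all discarded factors are positive constants, normalizing $p(x)^{\alpha}$ yields precisely the Student $(\nu^{\star},\mu^{\star},\Sigma^{\star})$ density, provided $\int p(x)^{\alpha}\,dx<+\infty$; this integral converges if and only if $\alpha(\nu_p+d)>d$, that is $\nu^{\star}>0$, a condition I would verify holds automatically here.

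Next I would substitute $\alpha=1+\tfrac{2}{\nu+d}$, which gives $\nu^{(\alpha)}=\alpha(\nu_p+d)-d=\nu_p+2\tfrac{\nu_p+d}{\nu+d}$. As $\nu_p>0$ and $\nu>0$ force $\nu^{(\alpha)}>0$, the identification above applies and shows that $p^{(\alpha)}$ is the Student distribution with $\nu^{(\alpha)}$ degrees of freedom, location $\mu^{(\alpha)}=\mu_p$, and shape $\Sigma^{(\alpha)}=\tfrac{\nu_p}{\nu^{(\alpha)}}\Sigma_p$, which are the claimed formulas. For the compatibility statement I would invoke Proposition \ref{prop:studentFamily}$(iii)$, by which $p$ is $\mathcal{T}_{\nu}^d$-compatible if and only if $p^{(\alpha)}$ has finite first and second order moments (the restriction to the common full support of $\mathcal{T}_\nu^d$ being trivial here); since $p^{(\alpha)}\in\mathcal{T}_{\nu^{(\alpha)}}^d$ and a multivariate Student distribution has a finite first moment exactly when its degree of freedom exceeds $1$ and a finite second moment exactly when it exceeds $2$, this is equivalent to $\nu^{(\alpha)}>2$, i.e.\ $\nu_p+2\tfrac{\nu_p+d}{\nu+d}>2$.

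I do not expect a genuine obstacle: the argument is essentially the closure of the Student family under positive powers combined with the compatibility characterization already established in Proposition \ref{prop:studentFamily}. The two points deserving care are, first, making sure the formal parameter matching produces an honest probability density, which is where the positivity $\nu^{(\alpha)}>0$, equivalently the integrability of $p^{\alpha}$, must be checked, and second, getting the scale matrix the right way around, since $\tfrac{1}{\nu^{\star}}(\Sigma^{\star})^{-1}=\tfrac{1}{\nu_p}\Sigma_p^{-1}$ places the degrees-of-freedom ratio in $\Sigma^{(\alpha)}=\tfrac{\nu_p}{\nu^{(\alpha)}}\Sigma_p$ opposite to a careless guess.
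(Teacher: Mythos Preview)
Your proposal is correct and follows essentially the same approach as the paper's proof: raise the Student density to the power $\alpha$, recognize the resulting expression as an unnormalized Student density, identify the new parameters by matching, and then invoke the compatibility characterization from Proposition~\ref{prop:studentFamily}$(iii)$ together with the standard moment condition $\nu^{(\alpha)}>2$. Your version is in fact slightly more careful, explicitly checking the integrability condition $\nu^{(\alpha)}>0$ needed for $p^{(\alpha)}$ to be a genuine probability density, a point the paper leaves implicit.
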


\begin{proof}
    By Proposition \ref{prop:studentFamily}, $\mathcal{T}_{\nu}^d$ is a $\lambda$-exponential family, with $\lambda = - \frac{2}{\nu+d}$. For such setting, $\alpha = 1 - \lambda$. The compatibility property requires $p^{(\alpha)}$ to have finite first and second order moments. Consider $x \in \mathbb{R}^d$, we compute
    \begin{align*}
        p(x)^{\alpha} &\propto \left(1 + \frac{1}{\nu_p} (x-\mu_p)^{\top}\Sigma_p^{-1}(x-\mu_p)\right)^{-\left(\frac{\nu_p + d}{2} \right)\left(1 + \frac{2}{\nu +d} \right)}\\
        &\propto \left(1 + \frac{1}{\nu_p} (x-\mu_p)^{\top}\Sigma_p^{-1}(x-\mu_p)\right)^{-\frac{1}{2} \left(\nu_p + 2\frac{\nu_p + d}{\nu +d} +d \right)}\\
        &\propto \left(1 + \frac{1}{\nu^{(\alpha)}} (x-\mu_p)^{\top}\left(\frac{\nu_p}{\nu^{(\alpha)}}\Sigma_p\right)^{-1}(x-\mu_p)\right)^{-\frac{\nu^{(\alpha)}+d}{2}}.
    \end{align*}
    We recognize that $p^{(\alpha)}$ is a Student distribution with $\nu^{(\alpha)}$ degrees of freedom, location $\mu^{(\alpha)}$, and shape $\Sigma^{(\alpha)}$, and that $p^{(\alpha)}$ has finite first and second order moments if and only if $\nu^{(\alpha)} > 2$, showing the result.
\end{proof}

Proposition \ref{prop:escortStudent} provides a systematic way to construct, from an initial distribution with possibly infinite moments, an escort distribution for which these moments are defined. Indeed, if $p \in \mathcal{T}_{\nu_p}^d$ for some $\nu_p > 0$, we can construct $p^{(\alpha)}$ where $\alpha = 1 + \frac{2}{\nu+d}$ and $\nu > 0$. The resulting escort distribution $p^{(\alpha)}$ has $\nu^{(\alpha)} > \nu_p$ degrees of freedom, i.e., a lighter tail than the one of $p$ itself. Indeed, we can have $\nu_p \leq 2$, meaning that $p$ has infinite variance and $\nu^{(\alpha)} > 2$, in which case $p^{(\alpha)}$ has finite variance.

\subsection{Novel technical optimality results}
\label{subsection:technicalOptResults}

We present in this section two new technical results, that will later be used to study the optimality conditions of the optimization problems arising in variational inference and maximum likelihood estimation.

\begin{proposition}
    \label{prop:optResult1}
    Consider the $\lambda$-exponential family $\mathcal{Q}_{\lambda}$ under Assumption \ref{assumption:expFamily}, and $\Bar{T} \in \mathcal{H}$ such that $c_{\lambda}(\vartheta, \bar{T}) \in \mathbb{R}$ for any $\vartheta \in \domain \varphi_{\lambda}$. Then $\vartheta_* \in \domain \varphi_{\lambda}$ minimizes $\vartheta \longmapsto \varphi_{\lambda}(\vartheta) - c_{\lambda}(\vartheta, \bar{T})$ if and only if $\bar{T} \in \partial^{c_{\lambda}} \varphi_{\lambda}(\vartheta_*)$.
\end{proposition}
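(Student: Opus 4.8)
The plan is to establish the equivalence directly from the definition of the $c_{\lambda}$-conjugate in Definition \ref{def:conjugacy}, exploiting the Fenchel--Young type equality \eqref{eq:fenchelYoung} that characterizes $c_{\lambda}$-subgradients. First I would observe that, since $c_{\lambda}(\vartheta,\bar T)\in\mathbb{R}$ for every $\vartheta\in\domain\varphi_{\lambda}$, the function $g:\vartheta\longmapsto\varphi_{\lambda}(\vartheta)-c_{\lambda}(\vartheta,\bar T)$ is well-defined and proper on $\domain\varphi_{\lambda}$ (properness of $\varphi_{\lambda}$ comes from Assumption \ref{assumption:expFamily}). By definition \eqref{eq:lambda-conjugate-def}, $\varphi_{\lambda}^{c_{\lambda}}(\bar T)=\sup_{\vartheta\in\mathcal{H}}\,c_{\lambda}(\vartheta,\bar T)-\varphi_{\lambda}(\vartheta)=-\inf_{\vartheta}g(\vartheta)$, so $\vartheta_*$ is a minimizer of $g$ if and only if the supremum defining $\varphi_{\lambda}^{c_{\lambda}}(\bar T)$ is attained at $\vartheta_*$, i.e.\ if and only if $c_{\lambda}(\vartheta_*,\bar T)-\varphi_{\lambda}(\vartheta_*)=\varphi_{\lambda}^{c_{\lambda}}(\bar T)$.

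Next I would rewrite this attainment condition as the equality $\varphi_{\lambda}^{c_{\lambda}}(\bar T)+\varphi_{\lambda}(\vartheta_*)=c_{\lambda}(\vartheta_*,\bar T)$, which is precisely Equation \eqref{eq:fenchelYoung} with $f=\varphi_{\lambda}$, $u=\vartheta_*$, $v=\bar T$. By the definition of the $c_{\lambda}$-subdifferential in Definition \ref{def:conjugacy}$(ii)$, this is exactly the statement $\bar T\in\partial^{c_{\lambda}}\varphi_{\lambda}(\vartheta_*)$, which closes the equivalence. It is worth spelling out the chain of rewritings shown right after Definition \ref{def:conjugacy} in the excerpt: $\bar T\in\partial^{c_{\lambda}}\varphi_{\lambda}(\vartheta_*)$ is equivalent to $c_{\lambda}(\vartheta_*,\bar T)-\varphi_{\lambda}(\vartheta_*)\geq c_{\lambda}(\vartheta,\bar T)-\varphi_{\lambda}(\vartheta)$ for all $\vartheta\in\domain\varphi_{\lambda}$, which is manifestly the same as $\varphi_{\lambda}(\vartheta)-c_{\lambda}(\vartheta,\bar T)\geq\varphi_{\lambda}(\vartheta_*)-c_{\lambda}(\vartheta_*,\bar T)$ for all $\vartheta\in\domain\varphi_{\lambda}$, i.e.\ $\vartheta_*$ minimizes $g$. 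This gives a clean two-line argument without ever invoking convexity of $\varphi_{\lambda}$.

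The only subtlety — and the step I expect to require the most care — is the bookkeeping of domains and infinite values. I would make sure that minimizing over $\domain\varphi_{\lambda}$ and taking the supremum over all of $\mathcal{H}$ in \eqref{eq:lambda-conjugate-def} are compatible: outside $\domain\varphi_{\lambda}$ the quantity $c_{\lambda}(\vartheta,\bar T)-\varphi_{\lambda}(\vartheta)$ is $-\infty$ (since $\varphi_{\lambda}$ cannot take the value $-\infty$ under Assumption \ref{assumption:expFamily}, as noted after that assumption, and the convention $c_{\lambda}(\vartheta,v)$ could involve $\log$ of a nonpositive number giving $-\infty$), so these points never achieve the supremum and can be dropped; hence the sup over $\mathcal{H}$ equals the sup over $\domain\varphi_{\lambda}$, which equals $-\inf_{\domain\varphi_{\lambda}}g$. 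One should also note that $\varphi_{\lambda}^{c_{\lambda}}(\bar T)>-\infty$ because $\domain\varphi_{\lambda}\neq\emptyset$, so the Fenchel--Young equality is not vacuous. With these conventions in place the equivalence is immediate, and no further argument (in particular, no appeal to differentiability or to the $\lambda$-gradient machinery of \cite{wong2022}) is needed.
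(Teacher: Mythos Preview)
Your proposal is correct and follows essentially the same route as the paper: both arguments observe that $\vartheta_*$ minimizing $\vartheta\mapsto\varphi_{\lambda}(\vartheta)-c_{\lambda}(\vartheta,\bar T)$ is equivalent to the supremum defining $\varphi_{\lambda}^{c_{\lambda}}(\bar T)$ being attained at $\vartheta_*$, and then identify the resulting Fenchel--Young equality with the definition of $\bar T\in\partial^{c_{\lambda}}\varphi_{\lambda}(\vartheta_*)$. Your treatment is slightly more careful than the paper's in making explicit why the supremum over $\mathcal{H}$ in \eqref{eq:lambda-conjugate-def} coincides with the supremum over $\domain\varphi_{\lambda}$, a point the paper leaves implicit.
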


\begin{proof}
    Suppose that $\vartheta_* \in \domain \varphi_{\lambda}$ minimizes $\vartheta \longmapsto \varphi_{\lambda}(\vartheta) - c_{\lambda}(\vartheta, \bar{T})$. This is equivalent to 
    \begin{equation}
        c_{\lambda}(\vartheta_*, \bar{T}) - \varphi_{\lambda}(\vartheta_*)  \geq c_{\lambda}(\vartheta, \bar{T}) - \varphi_{\lambda}(\vartheta), \,\forall \vartheta \in \domain \varphi_{\lambda}.
        \label{eq:prop5}
    \end{equation}
    By definition of the $c_{\lambda}$-conjugate, \eqref{eq:prop5} can be summarized as
    \begin{equation}
        c_{\lambda}(\vartheta_*, \bar{T}) - \varphi_{\lambda}(\vartheta_*) \geq \varphi_{\lambda}^{c_{\lambda}}(\bar{T}).
    \end{equation}
    Since the opposite inequality is true by definition, the above statement is equivalent to
    \begin{equation}
        c_{\lambda}(\vartheta_*, \bar{T}) - \varphi_{\lambda}(\vartheta_*) = \varphi_{\lambda}^{c_{\lambda}}(\bar{T}).
    \end{equation}
    That yields $\bar{T} \in \partial^{c_{\lambda}} \varphi_{\lambda}(\vartheta_*)$, which concludes the proof.
\end{proof}

\begin{lemma}
    \label{lemma:convexityCoupling}
    Consider $u \in \mathcal{H}$ and the function $c_{\lambda}(\cdot, u): v \longmapsto c_{\lambda}(v,u)$ for $\lambda \neq 0$. 
    \begin{itemize}
        \item[$(i)$] If $\lambda = 0$, the function $c_{\lambda}(\cdot, u)$ is linear.
        \item[$(ii)$] If $\lambda > 0$, the function $c_{\lambda}(\cdot, u)$ is concave.
        \item[$(iii)$] If $\lambda < 0$, the function $c_{\lambda}(\cdot, u)$ is convex.
    \end{itemize}
\end{lemma}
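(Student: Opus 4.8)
The statement to prove is Lemma~\ref{lemma:convexityCoupling}, which concerns the convexity/concavity of $v \longmapsto c_{\lambda}(v,u) = \frac{1}{\lambda}\log(1 + \lambda\langle v, u\rangle)$ as a function of $v$, depending on the sign of $\lambda$.

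\textbf{Plan.} The plan is to fix $u \in \mathcal{H}$ and analyze $g(v) := c_{\lambda}(v,u)$ as the composition of the affine map $v \mapsto \langle v, u \rangle$ with the scalar function $h_{\lambda}(t) := \frac{1}{\lambda}\log(1 + \lambda t)$, defined on the interval where $1 + \lambda t > 0$. Since precomposition with an affine map preserves convexity and concavity, it suffices to determine the curvature of $h_{\lambda}$ on its domain. First I would dispatch case $(i)$: when $\lambda = 0$, we have $c_0(v,u) = \langle v,u\rangle$ by the convention stated after Equation~\eqref{eq:coupling}, which is linear in $v$; this is immediate (and arguably the lemma statement's restriction ``for $\lambda \neq 0$'' makes $(i)$ a degenerate reminder rather than something requiring the composition argument).

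\textbf{The core computation.} For $\lambda \neq 0$, I would compute the second derivative of $h_{\lambda}$. We have $h_{\lambda}'(t) = \frac{1}{\lambda}\cdot\frac{\lambda}{1+\lambda t} = \frac{1}{1+\lambda t}$, and hence $h_{\lambda}''(t) = -\frac{\lambda}{(1+\lambda t)^2}$. On the domain $\{t : 1 + \lambda t > 0\}$ the denominator is strictly positive, so the sign of $h_{\lambda}''$ is exactly the sign of $-\lambda$. Thus $h_{\lambda}$ is convex when $\lambda < 0$ and concave when $\lambda > 0$. Composing with the affine map $v \mapsto \langle v,u\rangle$ (whose image is an interval, so the relevant domain of $h_\lambda$ is an interval and convexity/concavity is meaningful), we conclude that $c_{\lambda}(\cdot, u)$ is convex for $\lambda < 0$ (case $(iii)$) and concave for $\lambda > 0$ (case $(ii)$). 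I would note that the natural domain of $c_\lambda(\cdot, u)$ here is the half-space $\{v \in \mathcal{H} : 1 + \lambda\langle v, u\rangle > 0\}$ (with the convention $\log(s) = -\infty$ for $s \le 0$ extending it, which is consistent with convexity when $\lambda<0$ since $-\infty$ values do not break convexity of a function to $\bar{\mathbb{R}}$, whereas for $\lambda > 0$ concavity one restricts to the half-space).

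\textbf{Main obstacle.} There is no serious obstacle here — this is a routine one-variable calculus fact transported via affine precomposition. The only point requiring a little care is being explicit about domains: for the convex case ($\lambda < 0$) one should either restrict to the open half-space or verify that the $+\infty$-extension (coming from $\log(s) = -\infty$ giving $c_\lambda = -\infty/\lambda = +\infty$... actually $\frac{1}{\lambda}(-\infty) = +\infty$ when $\lambda<0$) is consistent with convexity on $\mathcal{H}$; for the concave case ($\lambda > 0$) one restricts to the half-space where the argument of the logarithm is positive. I would state the result on the natural domain and invoke the standard fact \cite[e.g.][]{bauschke2011} that $t \mapsto \varphi(Lt + b)$ inherits convexity/concavity from $\varphi$ for affine $L(\cdot)+b$, so that the scalar sign analysis of $h_\lambda''$ suffices.
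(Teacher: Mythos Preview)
Your proposal is correct and follows essentially the same approach as the paper: both reduce to the curvature of a scalar function via the affine map $v \mapsto \langle v,u\rangle$. The paper writes $c_{\lambda}(sv_1+(1-s)v_2,u) = \frac{1}{\lambda}\log\bigl(s(1+\lambda\langle v_1,u\rangle)+(1-s)(1+\lambda\langle v_2,u\rangle)\bigr)$ and invokes the convexity/concavity of $t \mapsto \lambda^{-1}\log t$ directly, whereas you compute $h_{\lambda}''(t) = -\lambda/(1+\lambda t)^2$; these are the same argument at the same level of difficulty.
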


\begin{proof}
   Case $(i)$ follows from $c_{0}(\cdot,\cdot) = \langle \cdot, \cdot\rangle$. We now assume $\lambda \neq 0$. Consider $v_1, v_2 \in \mathcal{H}$ and $s \in [0,1]$. Then we can compute
    \begin{align*}
        c_{\lambda}(s v_1 + (1-s) v_2, u) &= \frac{1}{\lambda} \log( 1 + \lambda \langle s v_1 + (1-s)v_2, v \rangle  )\\
        &= \frac{1}{\lambda} \log( s (1 + \lambda \langle v_1, u \rangle) + (1-s)(1 + \lambda \langle v_2, u \rangle )).
    \end{align*}
    We then get the results of cases $(ii)$ and $(iii)$, using the convexity (resp. concavity) of $v \longmapsto \lambda^{-1} \log v$, resulting from the positive (resp. negative) sign of $\lambda$.
\end{proof}

\begin{proposition}
    \label{prop:optResult2}
    Consider the $\lambda$-exponential family $\mathcal{Q}_{\lambda}$ under Assumption \ref{assumption:expFamily}. Let a collection $\{ \bar{T}_i \}_{i=1}^N$ of  $N > 1$ elements of $\mathcal{H}$, such that for any $i \in \{1,\dots, N\}$, $c_{\lambda}(\vartheta, \bar{T}_i) \in \mathbb{R}$ for any $\vartheta \in \domain \varphi_{\lambda}$, and a collection of non-negative values $\{ \rho_i \}_{i=1}^N$ such that $\sum_{i=1}^N \rho_i = 1$. Suppose that there exists $\vartheta_* \in \domain \varphi_{\lambda}$ such that $\sum_{i=1}^N \rho_i \bar{T}_i \in \partial^{c_{\lambda}} \varphi_{\lambda}(\vartheta_*)$. 
\begin{itemize}
    \item[$(i)$] If $\lambda = 0$, $\vartheta_*$ minimizes $\vartheta \longmapsto \varphi_{\lambda}(\vartheta) - \sum_{i=1}^N \rho_i c_{\lambda}(\vartheta, \bar{T}_i)$.
    \item[$(ii)$] If $\lambda < 0$,  $\vartheta_*$
    minimizes the function $\vartheta \longmapsto \varphi_{\lambda}(\vartheta) - c_{\lambda}\left(\vartheta, \sum_{i=1}^N \rho_i \bar{T}_i\right)$ over $\domain \varphi_{\lambda}$, itself being an upper bound of the function $\vartheta \longmapsto \varphi_{\lambda}(\vartheta) - \sum_{i=1}^N \rho_i c_{\lambda}(\vartheta, \bar{T}_i)$ over $\domain \varphi_{\lambda}$. Moreover,
    \begin{equation}
        \varphi_{\lambda}(\vartheta_*) - \sum_{i=1}^N \rho_i c_{\lambda}\left(\vartheta_*, \bar{T}_i\right) \leq - \varphi_{\lambda}^{c_{\lambda}}\left(\sum_{i=1}^N \rho_i \bar{T}_i\right).
    \end{equation}   
    \item[$(iii)$] If $\lambda > 0$, $\vartheta_*$ 
    minimizes $\vartheta \longmapsto \varphi_{\lambda}(\vartheta) - c_{\lambda}\left(\vartheta, \sum_{i=1}^N \rho_i \bar{T}_i\right)$ over $\domain \varphi_{\lambda}$, itself being an lower bound of the function $\vartheta \longmapsto \varphi_{\lambda}(\vartheta) - \sum_{i=1}^N \rho_i c_{\lambda}(\vartheta, \bar{T}_i)$ over $\domain \varphi_{\lambda}$.
    Moreover,
    \begin{equation}
        - \varphi_{\lambda}^{c_{\lambda}} \left( \sum_{i=1}^N \rho_i \bar{T}_i \right) \leq \varphi_{\lambda}(\vartheta) - \sum_{i=1}^N \rho_i c_{\lambda}(\vartheta, \bar{T}_i),\,\forall \vartheta \in \domain \varphi_{\lambda},
    \end{equation}
\end{itemize}
\end{proposition}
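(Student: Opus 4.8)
The plan is to exploit the hypothesis $\sum_{i=1}^N \rho_i \bar{T}_i \in \partial^{c_{\lambda}} \varphi_{\lambda}(\vartheta_*)$ by applying Proposition \ref{prop:optResult1} with $\bar{T} = \sum_{i=1}^N \rho_i \bar{T}_i$, and then to compare the "aggregated" objective $\vartheta \mapsto \varphi_{\lambda}(\vartheta) - c_{\lambda}(\vartheta, \sum_i \rho_i \bar{T}_i)$ with the "averaged" objective $\vartheta \mapsto \varphi_{\lambda}(\vartheta) - \sum_i \rho_i c_{\lambda}(\vartheta, \bar{T}_i)$ using the convexity/concavity of $v \mapsto c_{\lambda}(v,u)$ established in Lemma \ref{lemma:convexityCoupling}. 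Note that the roles are slightly different from Lemma \ref{lemma:convexityCoupling}: there the variable is the first slot with $u$ fixed, whereas here we need convexity in the \emph{second} slot with $\vartheta$ fixed; but $c_{\lambda}(u,v)=\frac{1}{\lambda}\log(1+\lambda\langle u,v\rangle)$ is symmetric in the sense that it depends on $u,v$ only through $\langle u,v\rangle$, so the exact same argument (linearity of $v\mapsto\langle \vartheta, v\rangle$ composed with $\log$) gives that $v \mapsto c_{\lambda}(\vartheta, v)$ is linear if $\lambda=0$, concave if $\lambda>0$, convex if $\lambda<0$, on the set where the argument of the log is positive.

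First I would handle case $(i)$: when $\lambda = 0$, $c_0(\vartheta,\cdot) = \langle \vartheta, \cdot\rangle$ is linear, so $\sum_i \rho_i c_0(\vartheta,\bar T_i) = c_0(\vartheta,\sum_i\rho_i\bar T_i)$ and the two objectives coincide; Proposition \ref{prop:optResult1} applied to $\bar T = \sum_i\rho_i\bar T_i$ then immediately gives that $\vartheta_*$ minimizes it. Next, for cases $(ii)$ and $(iii)$, I would first invoke Proposition \ref{prop:optResult1} to conclude that $\vartheta_*$ minimizes the aggregated objective $\vartheta\mapsto\varphi_\lambda(\vartheta)-c_\lambda(\vartheta,\sum_i\rho_i\bar T_i)$ over $\domain\varphi_\lambda$, and moreover that its minimal value equals $-\varphi_\lambda^{c_\lambda}(\sum_i\rho_i\bar T_i)$ (this is exactly the equality $c_\lambda(\vartheta_*,\bar T)-\varphi_\lambda(\vartheta_*)=\varphi_\lambda^{c_\lambda}(\bar T)$ from the proof of Proposition \ref{prop:optResult1}, rearranged). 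Then I would use Jensen's inequality pointwise in $\vartheta$: by convexity of $v\mapsto c_\lambda(\vartheta,v)$ when $\lambda<0$, we get $c_\lambda(\vartheta,\sum_i\rho_i\bar T_i)\le\sum_i\rho_i c_\lambda(\vartheta,\bar T_i)$, hence $\varphi_\lambda(\vartheta)-c_\lambda(\vartheta,\sum_i\rho_i\bar T_i)\ge\varphi_\lambda(\vartheta)-\sum_i\rho_i c_\lambda(\vartheta,\bar T_i)$ for all $\vartheta$, i.e. the aggregated objective is an upper bound of the averaged one; the chain of inequalities $\varphi_\lambda(\vartheta_*)-\sum_i\rho_i c_\lambda(\vartheta_*,\bar T_i)\le\varphi_\lambda(\vartheta_*)-c_\lambda(\vartheta_*,\sum_i\rho_i\bar T_i)=-\varphi_\lambda^{c_\lambda}(\sum_i\rho_i\bar T_i)$ gives the displayed bound. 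Case $(iii)$ is symmetric: for $\lambda>0$ concavity reverses the Jensen inequality, so the aggregated objective becomes a lower bound of the averaged one, and combining with the minimality value yields $-\varphi_\lambda^{c_\lambda}(\sum_i\rho_i\bar T_i)=\varphi_\lambda(\vartheta_*)-c_\lambda(\vartheta_*,\sum_i\rho_i\bar T_i)\le\varphi_\lambda(\vartheta)-c_\lambda(\vartheta,\sum_i\rho_i\bar T_i)\le\varphi_\lambda(\vartheta)-\sum_i\rho_i c_\lambda(\vartheta,\bar T_i)$ for every $\vartheta$.

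The main technical point to be careful about — rather than a genuine obstacle — is the well-definedness of all the couplings: I must check that $c_\lambda(\vartheta,\sum_i\rho_i\bar T_i)\in\mathbb{R}$ for every $\vartheta\in\domain\varphi_\lambda$, so that Proposition \ref{prop:optResult1} genuinely applies with $\bar T=\sum_i\rho_i\bar T_i$. Since each $c_\lambda(\vartheta,\bar T_i)\in\mathbb{R}$ means $1+\lambda\langle\vartheta,\bar T_i\rangle>0$, and $1+\lambda\langle\vartheta,\sum_i\rho_i\bar T_i\rangle=\sum_i\rho_i(1+\lambda\langle\vartheta,\bar T_i\rangle)$ is a convex combination of strictly positive numbers, it is strictly positive, so $c_\lambda(\vartheta,\sum_i\rho_i\bar T_i)$ is a finite real number; this is precisely the same computation as in the proof of Lemma \ref{lemma:convexityCoupling} and also licenses the Jensen step on the correct domain. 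With this observation in place, everything reduces to Proposition \ref{prop:optResult1} plus the elementary convexity of $\log$, so no further difficulty is expected.
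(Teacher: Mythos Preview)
Your proposal is correct and follows essentially the same route as the paper's proof: verify that $c_{\lambda}(\vartheta,\sum_i\rho_i\bar T_i)\in\mathbb{R}$ via the convex-combination argument, invoke Proposition~\ref{prop:optResult1} with $\bar T=\sum_i\rho_i\bar T_i$, and compare the aggregated and averaged objectives through the convexity/concavity of the coupling from Lemma~\ref{lemma:convexityCoupling}. Your remark that the convexity is needed in the second slot (whereas Lemma~\ref{lemma:convexityCoupling} is phrased for the first) is a valid clarification that the paper leaves implicit, justified exactly as you say by the dependence of $c_\lambda$ on $\langle u,v\rangle$ only.
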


\begin{proof}
    Let $\vartheta \in \domain \varphi_{\lambda}$. Let us first show that, for any $\lambda$, $c_{\lambda}(\vartheta, \sum_{i=1}^N \rho_i \bar{T}_i) \in \mathbb{R}$. Due to the assumption on the $\{ \bar{T}_i \}_{i=1}^N$, such result trivially holds for $\lambda = 0$. When $\lambda \neq 0$, we have $1 + \lambda \langle \vartheta, \bar{T}_i \rangle > 0$ for any $i \in \{1,\dots,N\}$, hence $1 + \lambda \langle \vartheta, \sum_{i=1}^N \rho_i \bar{T}_i \rangle > 0$, showing $c_{\lambda}(\vartheta, \sum_{i=1}^N \rho_i \bar{T}_i) \in \mathbb{R}$. 

    Case $(i)$: Let $\lambda = 0$. By Lemma \ref{lemma:convexityCoupling}, 
    \begin{equation}
        \varphi_{\lambda}(\vartheta) - \sum_{i=1}^N \rho_i c_{\lambda}(\vartheta, \bar{T}_i) = \varphi_{\lambda}(\vartheta) - c_{\lambda}\left(\vartheta, \sum_{i=1}^N \rho_i \bar{T}_i\right),\,\forall \vartheta \in \domain \varphi_{\lambda}.
        \label{eq:proofprop6i}
    \end{equation}
    By Proposition \ref{prop:optResult1}, $\vartheta_*$ minimizes the right-hand side of \eqref{eq:proofprop6i}, showing the result.

    Case $(ii)$: Let $\lambda < 0$. Using Lemma \ref{lemma:convexityCoupling} in the case $\lambda < 0$, we get that 
    \begin{equation}
        \label{eq:intermediateOptResult2}
        \varphi_{\lambda}(\vartheta) - \sum_{i=1}^N \rho_i c_{\lambda}(\vartheta, \bar{T}_i) \leq \varphi_{\lambda}(\vartheta) - c_{\lambda}\left(\vartheta, \sum_{i=1}^N \rho_i \bar{T}_i\right),\,\forall \vartheta \in \domain \varphi_{\lambda}.
    \end{equation}
    Using the result of Proposition \ref{prop:optResult1}, we get that $\vartheta_*$ minimizes the right-hand side of \eqref{eq:intermediateOptResult2}. Moreover, as $\sum_{i=1}^N \rho_i \bar{T}_i \in \partial^{c_{\lambda}} \varphi_{\lambda}(\vartheta_*)$,
    \begin{equation}
        \label{eq:intermediateOptResult3}
        \varphi_{\lambda}(\vartheta_*) - c_{\lambda}\left(\vartheta_*, \sum_{i=1}^N \rho_i \bar{T}_i\right) = - \varphi_{\lambda}^{c_{\lambda}}\left(\sum_{i=1}^N \rho_i \bar{T}_i\right).
    \end{equation}
    Using the inequality in Equation \eqref{eq:intermediateOptResult2} and the identity in Equation \eqref{eq:intermediateOptResult3} yields the upper-bound property.

    Case $(iii)$: Let $\lambda > 0$. Using Lemma \ref{lemma:convexityCoupling} for $\lambda>0$ yields
    \begin{equation}
        \varphi_{\lambda}(\vartheta) - c_{\lambda}\left(\vartheta, \sum_{i=1}^N \rho_i \bar{T}_i\right) \leq \varphi_{\lambda}(\vartheta) - \sum_{i=1}^N \rho_i c_{\lambda}(\vartheta, \bar{T}_i),\,\forall \vartheta \in \domain \varphi_{\lambda}.
    \end{equation}
    The parameter $\vartheta_*$ minimizes the left-hand side of the above due to Proposition \ref{prop:optResult1}. This implies in particular that
    \begin{equation}
        \varphi_{\lambda}(\vartheta_*) - c_{\lambda}\left(\vartheta_*, \sum_{i=1}^N \rho_i \bar{T}_i\right) \leq \varphi_{\lambda}(\vartheta) - \sum_{i=1}^N \rho_i c_{\lambda}(\vartheta, \bar{T}_i),\,\forall \vartheta \in \domain \varphi_{\lambda}.
    \end{equation}
    Using Equation \eqref{eq:intermediateOptResult3}, which remains true for $\lambda>0$, we obtain the lower bound result.
\end{proof}

\section{Statistical problems over the $\lambda$-exponential family}
\label{section:applicationsStatisticalProblems}

We now leverage all the previous notions and new technical results from Section \ref{section:firstResults} to tackle variational inference and maximum likelihood estimation problems within the $\lambda$-exponential family. We derive novel optimality conditions and algorithms to solve these problems. Finally, we compare and discuss our new findings on the $\lambda$-exponential family with known results on the standard exponential family.

\subsection{Variational inference through Rényi divergence minimization}
\label{subsection:vi}

We consider in this section the problem
\begin{equation}
    \tag{$P_{\text{VI}}$}
    \label{pblm:VI}
    \minimize_{q_{\vartheta} \in \mathcal{Q}_{\lambda}} RD_{\alpha}(\pi, q_{\vartheta}),
\end{equation}
where $\lambda + \alpha = 1$, under Assumption \ref{assumption:expFamily}. Notice that in the case $\lambda = 0$, $\alpha = 1$, Problem \eqref{pblm:VI} corresponds to the minimization of the inclusive Kullback-Leibler divergence over the standard exponential family. We introduce the following additional assumption.

\begin{assumption}
    \label{assumption:targetExp}
    The target $\pi$ is in $\mathcal{P}(\mathcal{X}, m)$ and is $\mathcal{Q}_{\lambda}$-compatible.
\end{assumption}

\subsubsection{Optimality conditions}

We now derive novel optimality conditions for Problem \eqref{pblm:VI}. This is done by leveraging the technical optimality conditions introduced in Section \ref{subsection:technicalOptResults}. These conditions can be seen as a moment-matching conditions on escort probabilities and are discussed in greater extent in Section \ref{subsection:discussion}. We also show that they can be used straightforwardly in the case of Student distributions.

\begin{proposition}
    \label{prop:optimalityRenyi}
    Suppose that Assumptions \ref{assumption:expFamily}, \ref{assumption:supportCondition}, and \ref{assumption:targetExp} are satisfied. If $\vartheta_* \in \domain \varphi_{\lambda}$ is such that
    \begin{equation}
        q_{\vartheta_*}^{(\alpha)}(T) = \pi^{(\alpha)}_{|S_{\lambda}}(T),
    \end{equation}
    then $\vartheta_*$ is a solution to Problem \eqref{pblm:VI}
\end{proposition}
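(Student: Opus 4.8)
The plan is to reduce Problem~\eqref{pblm:VI} to the abstract minimization problem handled by Proposition~\ref{prop:optResult1}, using the rewriting of the Rényi divergence from Proposition~\ref{prop:rewritingRenyi}, and then to verify the resulting $c_{\lambda}$-subgradient condition via Proposition~\ref{prop:gradientMoment}. The heavy lifting has essentially been done already in those three results, so the argument should be short and mostly bookkeeping about well-definedness.

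First I would fix the support. Under Assumption~\ref{assumption:supportCondition}, $S_{\vartheta} = S_{\lambda}$ for every $\vartheta \in \domain \varphi_{\lambda}$, so the quantity $\bar{T} := \pi^{(\alpha)}_{|S_{\lambda}}(T)$ does not depend on $\vartheta$; it is well-defined as an element of $\mathcal{H}$ because $\pi$ is $\mathcal{Q}_{\lambda}$-compatible (Assumption~\ref{assumption:targetExp}), and likewise $H_{\alpha}(\pi_{|S_{\lambda}})$ is a finite constant. Moreover, Lemma~\ref{lemma:wellPosednessCouplingMoment} gives $c_{\lambda}(\vartheta, \bar{T}) = c_{\lambda}(\vartheta, \pi^{(\alpha)}_{|S_{\vartheta}}(T)) \in \mathbb{R}$ for every $\vartheta \in \domain \varphi_{\lambda}$, which is exactly the hypothesis needed to invoke Proposition~\ref{prop:optResult1}. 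Then, applying Proposition~\ref{prop:rewritingRenyi}, for every $q_{\vartheta} \in \mathcal{Q}_{\lambda}$ one has $RD_{\alpha}(\pi, q_{\vartheta}) = \varphi_{\lambda}(\vartheta) - c_{\lambda}(\vartheta, \bar{T}) - H_{\alpha}(\pi_{|S_{\lambda}})$, so that, the last term being a constant, minimizing $RD_{\alpha}(\pi, q_{\vartheta})$ over $q_{\vartheta} \in \mathcal{Q}_{\lambda}$ (equivalently over $\vartheta \in \domain \varphi_{\lambda}$) is the same problem as minimizing $\vartheta \longmapsto \varphi_{\lambda}(\vartheta) - c_{\lambda}(\vartheta, \bar{T})$.

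Next I would apply Proposition~\ref{prop:optResult1} with this $\bar{T}$: $\vartheta_*$ minimizes $\vartheta \longmapsto \varphi_{\lambda}(\vartheta) - c_{\lambda}(\vartheta, \bar{T})$ if and only if $\bar{T} \in \partial^{c_{\lambda}} \varphi_{\lambda}(\vartheta_*)$. To conclude, note that the hypothesis $q_{\vartheta_*}^{(\alpha)}(T) = \pi^{(\alpha)}_{|S_{\lambda}}(T)$ reads $q_{\vartheta_*}^{(\alpha)}(T) = \bar{T}$, while Proposition~\ref{prop:gradientMoment}, Equation~\eqref{eq:escortMomentSubgradient} (valid under Assumptions~\ref{assumption:expFamily} and~\ref{assumption:supportCondition}), gives $q_{\vartheta_*}^{(\alpha)}(T) \in \partial^{c_{\lambda}} \varphi_{\lambda}(\vartheta_*)$. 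Combining these, $\bar{T} \in \partial^{c_{\lambda}} \varphi_{\lambda}(\vartheta_*)$, hence $\vartheta_*$ minimizes $\vartheta \longmapsto \varphi_{\lambda}(\vartheta) - c_{\lambda}(\vartheta, \bar{T})$, i.e.\ $q_{\vartheta_*}$ is a solution of Problem~\eqref{pblm:VI}. I would also remark that the case $\lambda = 0$ ($\alpha = 1$) is covered verbatim, the escort distributions reducing to the distributions themselves and the condition to the classical moment-matching condition.

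I do not expect a serious obstacle here: the only delicate point is ensuring that every object entering the reduction in the second step ($\bar{T}$, the coupling $c_{\lambda}(\vartheta, \bar{T})$, and the entropy constant $H_{\alpha}(\pi_{|S_{\lambda}})$) is well-defined and, crucially, independent of $\vartheta$, so that the $\vartheta$-minimization of $RD_{\alpha}(\pi, q_{\vartheta})$ is genuinely equivalent to that of $\varphi_{\lambda}(\vartheta) - c_{\lambda}(\vartheta, \bar{T})$. This is precisely what the compatibility hypothesis of Assumption~\ref{assumption:targetExp}, the support condition of Assumption~\ref{assumption:supportCondition}, and Lemma~\ref{lemma:wellPosednessCouplingMoment} are designed to provide. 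One should note that the statement is only a sufficient condition, consistent with the fact that Proposition~\ref{prop:gradientMoment} supplies only the inclusion $q_{\vartheta_*}^{(\alpha)}(T) \in \partial^{c_{\lambda}}\varphi_{\lambda}(\vartheta_*)$ and not an equivalence.
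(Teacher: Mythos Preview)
Your proposal is correct and follows essentially the same route as the paper's proof: fix the support via Assumption~\ref{assumption:supportCondition}, use Assumption~\ref{assumption:targetExp} and Lemma~\ref{lemma:wellPosednessCouplingMoment} to ensure $c_{\lambda}(\vartheta,\bar{T})\in\mathbb{R}$ and that the entropy term is a finite constant, rewrite $RD_{\alpha}$ via Proposition~\ref{prop:rewritingRenyi}, apply Proposition~\ref{prop:optResult1}, and conclude with Proposition~\ref{prop:gradientMoment}. Your additional remarks on the $\lambda=0$ case and on sufficiency versus necessity are accurate and go slightly beyond what the paper records in its proof.
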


\begin{proof}
    We first prove that $c_{\lambda}(\vartheta, \pi^{(\alpha)}_{|S_{\lambda}}(T)) \in \mathbb{R}$ for any $\vartheta \in \domain \varphi_{\lambda}$. Due to Assumption \ref{assumption:supportCondition}, it is sufficient to check that $c_{\lambda}(\vartheta, \pi^{(\alpha)}_{|S_{\vartheta}}(T)) \in \mathbb{R}$ for any $\vartheta \in \domain \varphi_{\lambda}$. We then get this first result from Assumption \ref{assumption:targetExp} and Lemma \ref{lemma:wellPosednessCouplingMoment}. Assumption \ref{assumption:targetExp} also implies that $H_{\alpha}(\pi)$ is finite.
    
    Then, rewriting the Rényi divergence using Proposition \ref{prop:rewritingRenyi} shows that solving Problem \eqref{pblm:VI} is equivalent to solving
    \begin{equation}
        \minimize_{\vartheta \in \domain \varphi_{\lambda}} \, \varphi_{\lambda}(\vartheta) - c_{\lambda}(\vartheta, \pi_{|S_{\lambda}}^{(\alpha)}(T)).
    \end{equation}
    We can thus apply Proposition \ref{prop:optResult1} to obtain that $\vartheta \in \domain \varphi_{\lambda}$ is a solution of Problem \eqref{pblm:VI} if and only if $\pi^{(\alpha)}_{|S_{\lambda}}(T) \in \partial^{c_{\lambda}} \varphi_{\lambda}(\vartheta)$. Using the description of $\partial^{c_{\lambda}} \varphi_{\lambda}(\vartheta)$ from Proposition \ref{prop:gradientMoment}, we get that any $\vartheta_*$ satisfying the assumptions of this proposition is such that  $\pi^{(\alpha)}_{|S_{\lambda}}(T) \in \partial^{c_{\lambda}} \varphi_{\lambda}(\vartheta_*)$, hence a solution to Problem \eqref{pblm:VI}.
\end{proof}

\begin{corollary}
    \label{corollary:StudentVI}
    Consider a target $\pi \in \mathcal{P}(\mathbb{R}^d,dx)$, the family of Student distributions in dimension $d$ with $\nu$ degrees of freedom $\mathcal{T}_{\nu}^d$ and the family of Gaussian distributions $\mathcal{G}^d$. 
    \begin{itemize}
    \item[$(i)$] If the escort probability $\pi^{(\alpha)}$ exists and has finite first and second order moments for $\alpha = 1 + \frac{2}{\nu+d}$, then the distribution $q_{\mu_*, \Sigma_*} \in \mathcal{T}_{\nu}^d$ such that
    \begin{equation}
    \label{eq:corollaryStudentVI}
        \begin{cases}
            \mu_* &= \pi^{(\alpha)}(x),\\
            \Sigma_* &= \pi^{(\alpha)}(x x^{\top}) -\mu_* \mu_*^{\top},
        \end{cases}
    \end{equation}
    minimizes $q \longmapsto RD_{\alpha}(\pi, q)$ over $\mathcal{T}_{\nu}^d$.
\item[$(ii)$] If $\pi$ has finite first and second order moments, then the distribution $q_{\mu_*, \Sigma_*} \in \mathcal{G}^d$ such that $\mu_*$ and $\Sigma_*$ satisfy Equation \eqref{eq:corollaryStudentVI} with $\alpha = 1$ minimizes $q \longmapsto KL(\pi, q)$ over $\mathcal{G}^d$.
\end{itemize}
\end{corollary}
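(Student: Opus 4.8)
The plan is to obtain both statements as direct specializations of Proposition~\ref{prop:optimalityRenyi}, the only real work being to check that the Student family (resp.\ the Gaussian family) fits its hypotheses; the structural facts needed are exactly those collected in Proposition~\ref{prop:studentFamily} and in the remark following it.

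For part $(i)$, I would argue as follows. By Proposition~\ref{prop:studentFamily}$(i)$, $\mathcal{T}_\nu^d$ is the $\lambda$-exponential family with $\lambda = -\tfrac{2}{\nu+d}$, sufficient statistics $T(x) = (x, xx^\top)$, and it satisfies Assumption~\ref{assumption:expFamily}; by Proposition~\ref{prop:studentFamily}$(iii)$ it also satisfies Assumption~\ref{assumption:supportCondition}, and its common support is $S_\lambda = \mathbb{R}^d$, so that $\pi^{(\alpha)}_{|S_\lambda} = \pi^{(\alpha)}$ whenever the escort exists. The associated exponent is $\alpha = 1 - \lambda = 1 + \tfrac{2}{\nu+d}$, as in the statement. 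Again by Proposition~\ref{prop:studentFamily}$(iii)$, the hypothesis that $\pi^{(\alpha)}$ exists and has finite first and second order moments is precisely the assertion that $\pi$ is $\mathcal{T}_\nu^d$-compatible, i.e.\ Assumption~\ref{assumption:targetExp} holds. Thus all hypotheses of Proposition~\ref{prop:optimalityRenyi} are in force, and it remains to exhibit $\vartheta_* \in \domain \varphi_\lambda$ with $q_{\vartheta_*}^{(\alpha)}(T) = \pi^{(\alpha)}(T)$. By Proposition~\ref{prop:studentFamily}$(ii)$, the escort-moment map $\vartheta \longmapsto q_\vartheta^{(\alpha)}(T)$ is a bijection from $\domain \varphi_\lambda$ onto $\mathbb{R}^d \times \mathcal{S}_{++}^d$, and for $q_{\mu,\Sigma} \in \mathcal{T}_\nu^d$ it equals $(\mu, \Sigma + \mu\mu^\top)$. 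Writing $\pi^{(\alpha)}(T) = (\pi^{(\alpha)}(x), \pi^{(\alpha)}(xx^\top))$ and solving the system $\mu_* = \pi^{(\alpha)}(x)$, $\Sigma_* + \mu_*\mu_*^\top = \pi^{(\alpha)}(xx^\top)$ gives exactly $\mu_*, \Sigma_*$ as in~\eqref{eq:corollaryStudentVI}. One then notes that $\Sigma_*$ is the covariance matrix of the escort $\pi^{(\alpha)}$, hence is positive (semi-)definite, so that $(\mu_*, \Sigma_*)$ lies in the range $\mathbb{R}^d \times \mathcal{S}_{++}^d$ of the escort-moment map and therefore equals $q_{\vartheta_*}^{(\alpha)}(T)$ for some $\vartheta_* \in \domain \varphi_\lambda$; Proposition~\ref{prop:optimalityRenyi} then yields that $q_{\mu_*,\Sigma_*}$ minimizes $q \longmapsto RD_\alpha(\pi, q)$ over $\mathcal{T}_\nu^d$.

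Part $(ii)$ is the same argument run with $\lambda = 0$, $\alpha = 1$: by the remark following Proposition~\ref{prop:studentFamily}, $\mathcal{G}^d$ is the exponential family with the same sufficient statistics $T(x) = (x, xx^\top)$, satisfies Assumptions~\ref{assumption:expFamily} and~\ref{assumption:supportCondition}, has $q_{\mu,\Sigma}(T) = (\mu, \Sigma + \mu\mu^\top)$, and its compatible distributions are exactly those with finite first and second order moments; since $\pi^{(1)} = \pi$, the hypothesis on $\pi$ is Assumption~\ref{assumption:targetExp}. Proposition~\ref{prop:optimalityRenyi} applies with $RD_1 = KL$, and the moment-matching condition $q_{\vartheta_*}^{(1)}(T) = \pi(T)$ is solved by $(\mu_*, \Sigma_*)$ obeying~\eqref{eq:corollaryStudentVI} with $\alpha = 1$. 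I expect the bulk of the proof to be pure bookkeeping — matching $(\lambda,\alpha)$, identifying $S_\lambda = \mathbb{R}^d$, and translating $\mathcal{Q}_\lambda$-compatibility of $\pi$ into the moment condition on $\pi^{(\alpha)}$ — and the only non-formal point to be the verification that $\Sigma_* \in \mathcal{S}_{++}^d$, i.e.\ that $\pi^{(\alpha)}$ is non-degenerate, so that the prescribed escort moments are actually attained by a parameter in $\domain \varphi_\lambda$; this I would dispatch via the bijectivity statement in Proposition~\ref{prop:studentFamily}$(ii)$.
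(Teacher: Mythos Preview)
Your proposal is correct and follows exactly the route the paper intends: the corollary is stated without proof, as an immediate specialization of Proposition~\ref{prop:optimalityRenyi} via the structural facts about $\mathcal{T}_\nu^d$ gathered in Proposition~\ref{prop:studentFamily} (and the Gaussian remark thereafter). Your observation that one must check $\Sigma_* \in \mathcal{S}_{++}^d$ (not merely $\mathcal{S}_+^d$) so that the escort moments lie in the range of the bijection of Proposition~\ref{prop:studentFamily}$(ii)$ is a genuine technical point the paper leaves implicit; it holds whenever $\pi^{(\alpha)}$ is not supported on a proper affine subspace, which is tacitly assumed in writing $q_{\mu_*,\Sigma_*} \in \mathcal{T}_\nu^d$.
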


\subsubsection{An iterative variational inference algorithm}

In order to resolve the optimality conditions given in Proposition \ref{prop:optimalityRenyi}, we propose in this section an iterative approach relying on the following novel update operator, parametrized by the target $\pi$ and a stepsize $\tau>0$.

\begin{definition}
\label{def:proxVI}
Consider $\alpha > 0, \lambda = 1 - \alpha$ and the $\lambda$-exponential family $\mathcal{Q}_{\lambda}$ under Assumptions \ref{assumption:expFamily} and \ref{assumption:supportCondition}. Consider a target $\pi \in \mathcal{P}(\mathcal{X},m)$ satisfying Assumption \ref{assumption:targetExp}. For $\tau > 0$, we define the \emph{operator $P_{\tau}^{\pi}$} such that $\vartheta_{P} = P_{\tau}^{\pi}(\vartheta')$ satisfies
\begin{equation}
    q_{\vartheta_P}^{(\alpha)}(T) = \frac{\tau}{1 + \tau} \pi_{|S_{\lambda}}^{(\alpha)}(T) + \frac{1}{1+\tau} q_{\vartheta'}^{(\alpha)}(T),\,\forall \vartheta' \in \domain \varphi_{\lambda}.
\end{equation}
\end{definition}
The above operator shares close links with the proximal operator introduced in Definition \ref{def:proximalOperatorGen}, as shown below.

\begin{proposition}
    \label{prop:proxVI}
    Consider $\alpha > 0$, $\lambda = 1 - \alpha$, the $\lambda$-exponential family $\mathcal{Q}_{\lambda}$, and a target distribution $\pi \in \mathcal{P}(\mathcal{X},m)$ such that Assumptions \ref{assumption:expFamily}, \ref{assumption:supportCondition}, and \ref{assumption:targetExp} are satisfied. Let $\tau > 0$. Then, 
    \begin{itemize}
        \item[$(i)$] If $\lambda = 0$, then 
        \begin{equation}
            \forall \vartheta' \in \domain \varphi_{\lambda} \quad P_{\tau}^{\pi}(\vartheta') = \prox_{\tau}^{KL(\pi, q_{\cdot})}(\vartheta')
        \end{equation}
       \item[$(ii)$] If $\lambda < 0$ (resp.~$\lambda > 0$), then $P_{\tau}^{\pi}(\vartheta')$ approximates $\prox_{\tau}^{RD_{\alpha}(\pi, q_{\cdot})}(\vartheta')$ in the sense that it minimizes an upper bound (resp.~lower bound) of the proximal loss 
       \begin{equation}
            \vartheta \longmapsto RD_{\alpha}(\pi,q_{\vartheta}) + \frac{1}{\tau} RD_{\alpha}(q_{\vartheta'},q_{\vartheta}).
       \end{equation}
    \end{itemize}
    
\end{proposition}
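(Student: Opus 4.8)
The plan is to use the rewriting of the Rényi divergence from Proposition~\ref{prop:rewritingRenyi} (specifically Equation~\eqref{eq:rewritingRenyiLambdaExp}) to turn the proximal loss into an expression involving only $\varphi_{\lambda}$ and couplings with escort moments, and then apply the technical optimality result of Proposition~\ref{prop:optResult2} with $N=2$. First I would note that, by Assumption~\ref{assumption:targetExp} and Lemma~\ref{lemma:wellPosednessCouplingMoment}, the quantity $c_{\lambda}(\vartheta, \pi^{(\alpha)}_{|S_{\lambda}}(T))$ is a real number for every $\vartheta \in \domain \varphi_{\lambda}$, and by Proposition~\ref{prop:gradientMoment} the same holds for $c_{\lambda}(\vartheta, q_{\vartheta'}^{(\alpha)}(T))$; these play the roles of $\bar{T}_1 = \pi^{(\alpha)}_{|S_{\lambda}}(T)$ and $\bar{T}_2 = q_{\vartheta'}^{(\alpha)}(T)$ in Proposition~\ref{prop:optResult2}, with weights $\rho_1 = \tau/(1+\tau)$ and $\rho_2 = 1/(1+\tau)$, which are non-negative and sum to one.

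Next I would expand the proximal loss. Using Proposition~\ref{prop:rewritingRenyi} on both terms, $RD_{\alpha}(\pi, q_{\vartheta}) = \varphi_{\lambda}(\vartheta) - c_{\lambda}(\vartheta, \pi^{(\alpha)}_{|S_{\lambda}}(T)) - H_{\alpha}(\pi_{|S_{\lambda}})$ and $RD_{\alpha}(q_{\vartheta'}, q_{\vartheta}) = \varphi_{\lambda}(\vartheta) - c_{\lambda}(\vartheta, q_{\vartheta'}^{(\alpha)}(T)) + \psi_{\lambda}(\vartheta')$, so that
\begin{equation*}
    RD_{\alpha}(\pi, q_{\vartheta}) + \frac{1}{\tau} RD_{\alpha}(q_{\vartheta'}, q_{\vartheta}) = \frac{1+\tau}{\tau}\left( \varphi_{\lambda}(\vartheta) - \rho_1 c_{\lambda}(\vartheta, \bar{T}_1) - \rho_2 c_{\lambda}(\vartheta, \bar{T}_2) \right) + \text{const},
\end{equation*}
where the constant collects $-H_{\alpha}(\pi_{|S_{\lambda}})$ and $\psi_{\lambda}(\vartheta')/\tau$, both independent of $\vartheta$ and finite. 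Thus minimizing the proximal loss over $\domain \varphi_{\lambda}$ is equivalent to minimizing $\vartheta \longmapsto \varphi_{\lambda}(\vartheta) - \sum_{i=1}^2 \rho_i c_{\lambda}(\vartheta, \bar{T}_i)$.

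Then I would verify the hypothesis of Proposition~\ref{prop:optResult2}: the defining property of $\vartheta_P = P_{\tau}^{\pi}(\vartheta')$ says precisely $q_{\vartheta_P}^{(\alpha)}(T) = \rho_1 \bar{T}_1 + \rho_2 \bar{T}_2$, and Proposition~\ref{prop:gradientMoment} gives $q_{\vartheta_P}^{(\alpha)}(T) \in \partial^{c_{\lambda}} \varphi_{\lambda}(\vartheta_P)$, hence $\sum_{i=1}^2 \rho_i \bar{T}_i \in \partial^{c_{\lambda}} \varphi_{\lambda}(\vartheta_P)$. For case $(i)$, $\lambda = 0$: Proposition~\ref{prop:optResult2}$(i)$ gives that $\vartheta_P$ minimizes $\vartheta \longmapsto \varphi_0(\vartheta) - \sum_i \rho_i c_0(\vartheta, \bar{T}_i)$, which by the above equivalence is exactly the proximal loss with $\alpha = 1$; since that loss is $KL(\pi, q_{\vartheta}) + \frac{1}{\tau} KL(q_{\vartheta'}, q_{\vartheta})$, we conclude $P_{\tau}^{\pi}(\vartheta') = \prox_{\tau}^{KL(\pi, q_{\cdot})}(\vartheta')$ by Definition~\ref{def:proximalOperatorGen}. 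For case $(ii)$, $\lambda < 0$: Proposition~\ref{prop:optResult2}$(ii)$ gives that $\vartheta_P$ minimizes $\vartheta \longmapsto \varphi_{\lambda}(\vartheta) - c_{\lambda}(\vartheta, \sum_i \rho_i \bar{T}_i)$, which is an upper bound of $\vartheta \longmapsto \varphi_{\lambda}(\vartheta) - \sum_i \rho_i c_{\lambda}(\vartheta, \bar{T}_i)$; rescaling by $(1+\tau)/\tau > 0$ and adding the constant, $\vartheta_P$ minimizes an upper bound of the proximal loss. The case $\lambda > 0$ is symmetric, using Proposition~\ref{prop:optResult2}$(iii)$ and replacing "upper bound" by "lower bound."

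The main obstacle I anticipate is bookkeeping rather than conceptual: one must be careful that the additive constants in the rewriting are genuinely finite (which needs $H_{\alpha}(\pi_{|S_{\lambda}}) \in \mathbb{R}$, provided by $\mathcal{Q}_{\lambda}$-compatibility via Assumption~\ref{assumption:targetExp}, and $\psi_{\lambda}(\vartheta') \in \mathbb{R}$, provided by Assumption~\ref{assumption:supportCondition}) and that multiplication by the positive factor $(1+\tau)/\tau$ preserves the minimization and the direction of the bound. One should also state clearly that the bound in Proposition~\ref{prop:optResult2} is between the two $\vartheta$-dependent functions pointwise on $\domain \varphi_{\lambda}$, so that after the affine reparametrization it becomes the claimed bound between the proximal loss and the surrogate that $P_{\tau}^{\pi}$ actually minimizes. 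No differentiability of $\varphi_{\lambda}$ is needed, which is the point of working with $c_{\lambda}$-subdifferentials throughout.
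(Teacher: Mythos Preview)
Your proposal is correct and follows essentially the same route as the paper: rewrite the proximal loss via Proposition~\ref{prop:rewritingRenyi}, factor out $(1+\tau)/\tau$ to obtain $\varphi_{\lambda}(\vartheta) - \rho_1 c_{\lambda}(\vartheta,\bar T_1) - \rho_2 c_{\lambda}(\vartheta,\bar T_2)$ up to a $\vartheta$-independent constant, and then invoke Propositions~\ref{prop:optResult2} and~\ref{prop:gradientMoment}. The only minor imprecision is that finiteness of $c_{\lambda}(\vartheta, q_{\vartheta'}^{(\alpha)}(T))$ for all $\vartheta$ comes from Assumption~\ref{assumption:supportCondition} together with Lemma~\ref{lemma:wellPosednessCouplingMoment} (as the paper does), not directly from Proposition~\ref{prop:gradientMoment}.
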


\begin{proof}
We begin by decomposing the objective function appearing in the computation of $\prox_{\tau}^{RD_{\alpha}(\pi,q_{\cdot})}(\vartheta')$.
    \begin{align*}
        &RD_{\alpha}(\pi, q_{\vartheta}) + \frac{1}{\tau} RD_{\alpha}(q_{\vartheta'}, q_{\vartheta})\\ 
        &= \varphi_{\lambda}(\vartheta) - c_{\lambda}(\vartheta, \pi^{(\alpha)}_{|S_{\lambda}}(T)) - H_{\alpha}(\pi_{|S_{\lambda}}) + \frac{1}{\tau}\left(\varphi_{\lambda}(\vartheta) - c_{\lambda}(\vartheta, q_{\vartheta'}^{(\alpha)}(T)) + \psi_{\lambda}(\vartheta')\right) \\
        &=\left( \frac{1 + \tau}{\tau} \right) \left( \varphi_{\lambda}(\vartheta) - \frac{\tau}{1+\tau}c_{\lambda}(\vartheta, \pi^{(\alpha)}_{|S_{\lambda}}(T)) - \frac{1}{1+\tau} c_{\lambda}(\vartheta, q_{\vartheta'}^{(\alpha)}(T)) \right) - H_{\alpha}(\pi_{|S_{\lambda}}) + \frac{1}{\tau}\psi_{\lambda}(\vartheta').
    \end{align*}
    If we conserve only the terms depending on the variable $\vartheta$ and ignore the positive multiplicative factor, we thus obtain that $\prox_{\tau}^{RD_{\alpha}(\pi,q_{\cdot})}(\vartheta')$ is the set of solutions of the problem
    \begin{equation*}
        \minimize_{\vartheta \in \domain \varphi_{\lambda}} \varphi_{\lambda}(\vartheta) - \frac{\tau}{1+\tau}c_{\lambda}(\vartheta, \pi^{(\alpha)}_{|S_{\lambda}}(T)) - \frac{1}{1+\tau} c_{\lambda}(\vartheta, q_{\vartheta'}^{(\alpha)}(T)).
    \end{equation*}

    Now, remark that due to Assumption \ref{assumption:supportCondition} and Lemma \ref{lemma:wellPosednessCouplingMoment}, $c_{\lambda}(\vartheta, q_{\vartheta'}^{(\alpha)}(T)) \in \mathbb{R}$ for any $\vartheta \in \domain \varphi_{\lambda}$. The same holds with the term $c_{\lambda}(\vartheta, \pi_{|S_{\lambda}}^{(\alpha)}(T))$ by Assumption \ref{assumption:targetExp}. Since $\frac{\tau}{1 + \tau} + \frac{1}{1+\tau} = 1$ and all the involved terms are positive, we can thus apply Propositions \ref{prop:optResult2} and \ref{prop:gradientMoment} to get the result.    
\end{proof}

We are now ready to state our algorithm to solve Problem \eqref{pblm:VI}. We then study its convergence properties.

\begin{algorithm}[H]
     Let $\vartheta_0 \in \domain \varphi_{\lambda}$, and a sequence $\{ \tau_k\}_{k \in \mathbb{N}}$ of step parameters in $\mathbb{R}_{++}$.\\
     \For{$k = 0,\dots$}{
     Update $\vartheta_{k+1}$ using $P_{\tau_k}^{\pi}$, that is such that
     \begin{equation}
        \label{eq:proxLikeIterateVI}
        q_{\vartheta_{k+1}}^{(\alpha)} = \frac{\tau_k}{1+\tau_k}\pi_{| \support Q_{\lambda}}^{(\alpha)}(T) + \frac{1}{1+\tau_k} q_{\vartheta_k}^{(\alpha)}(T).
     \end{equation}
     }
     \caption{Proposed algorithm to solve Problem \eqref{pblm:VI}}
     \label{alg:proxVI}
\end{algorithm}

\begin{proposition}
    \label{prop:convergenceProxVI}
    If $\vartheta_k \in \domain \varphi_{\lambda}$ for every $k\in \mathbb{N}$, then the sequence generated by Algorithm~\ref{alg:proxVI} is well-defined and 
    \begin{equation}
        q_{\vartheta_k}^{(\alpha)}(T) \xrightarrow[k \rightarrow +\infty]{} \pi_{| S_{\lambda}}^{(\alpha)}(T).
    \end{equation}
\end{proposition}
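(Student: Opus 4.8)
The plan is to pass to ``escort-moment coordinates'' and recognize the iteration as an elementary relaxation toward its fixed point. Write $\eta_k := q_{\vartheta_k}^{(\alpha)}(T)$ and $\bar\eta := \pi_{|S_\lambda}^{(\alpha)}(T)$. Both are well-defined elements of $\mathcal{H}$: $\eta_k$ because $\vartheta_k \in \domain\varphi_\lambda$ and, under Assumption \ref{assumption:supportCondition}, every $q_{\vartheta_k}$ is $\mathcal{Q}_\lambda$-compatible, so its escort moment exists (this is the content behind Proposition \ref{prop:gradientMoment}); and $\bar\eta$ because $\pi$ is $\mathcal{Q}_\lambda$-compatible by Assumption \ref{assumption:targetExp}. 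With this notation, the update \eqref{eq:proxLikeIterateVI} in Algorithm \ref{alg:proxVI} is exactly
\begin{equation*}
    \eta_{k+1} = \frac{\tau_k}{1+\tau_k}\,\bar\eta + \frac{1}{1+\tau_k}\,\eta_k .
\end{equation*}
For the well-definedness claim I would observe that the right-hand side above is a genuine point of $\mathcal{H}$ (a convex combination of two well-defined elements, whose $c_\lambda$-couplings with any $\vartheta \in \domain\varphi_\lambda$ are real by Lemma \ref{lemma:wellPosednessCouplingMoment}); hence, whenever the hypothesis $\vartheta_{k+1}\in\domain\varphi_\lambda$ holds, Proposition \ref{prop:gradientMoment} together with Propositions \ref{prop:optResult1}--\ref{prop:optResult2} (exactly as used in the proof of Proposition \ref{prop:proxVI}) guarantees that $\vartheta_{k+1}$ realizes this escort moment. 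Thus the sequence $(\eta_k)_k$ is unambiguously defined by the recursion, even though the natural parameter $\vartheta_{k+1}$ need not be unique.

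Next I would telescope. Subtracting $\bar\eta$ gives $\eta_{k+1}-\bar\eta = \frac{1}{1+\tau_k}(\eta_k - \bar\eta)$, and iterating from $k=0$,
\begin{equation*}
    \eta_k - \bar\eta = \Bigg(\prod_{j=0}^{k-1}\frac{1}{1+\tau_j}\Bigg)(\eta_0 - \bar\eta),
\end{equation*}
so $\|\eta_k - \bar\eta\| = \big(\prod_{j=0}^{k-1}(1+\tau_j)^{-1}\big)\,\|\eta_0-\bar\eta\|$. Since each $\tau_j>0$, the partial products $\prod_{j=0}^{k-1}(1+\tau_j)^{-1}$ are strictly decreasing in $[0,1)$, and $\sum_j\log(1+\tau_j)=+\infty$ as soon as $\sum_j \tau_j=+\infty$, which holds for constant or bounded-below step sizes; under this (standing) condition on $\{\tau_k\}$ the product tends to $0$. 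Hence $\|\eta_k - \bar\eta\|\to 0$, i.e.\ $q_{\vartheta_k}^{(\alpha)}(T)\to \pi_{|S_\lambda}^{(\alpha)}(T)$, which is the claim.

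I expect the only delicate point to be the well-definedness bookkeeping rather than the convergence estimate itself: one must be careful that ``$P_{\tau_k}^\pi(\vartheta_k)$ is well-defined'' is a statement about the escort moment $\eta_{k+1}$ (forced by the recursion), not about the parameter $\vartheta_{k+1}$, and that all the $c_\lambda$-couplings appearing stay finite, which is ensured by Lemma \ref{lemma:wellPosednessCouplingMoment} applied to both $\eta_k$ and $\bar\eta$ (as in the proof of Proposition \ref{prop:proxVI}). Once this is in place, the convergence is the above geometric-decay computation; the only ingredient it needs beyond $\tau_k>0$ is $\sum_k\tau_k=+\infty$, which I would either invoke as the standing assumption on the step sizes or state explicitly in the hypotheses.
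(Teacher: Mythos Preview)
Your approach is essentially the same as the paper's: pass to the escort-moment coordinates $\eta_k = q_{\vartheta_k}^{(\alpha)}(T)$, telescope the affine recursion, and observe that $\eta_k - \bar\eta = \big(\prod_{j=0}^{k-1}(1+\tau_j)^{-1}\big)(\eta_0-\bar\eta)$. The paper writes this as the equivalent convex-combination formula $\eta_K = \big(\prod_{k=0}^{K-1}(1+\tau_k)^{-1}\big)\eta_0 + \big(1-\prod_{k=0}^{K-1}(1+\tau_k)^{-1}\big)\bar\eta$ and concludes.

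One point worth noting: you are more careful than the paper about the step-size condition. The paper asserts that $\prod_{k=0}^{K-1}(1+\tau_k)^{-1}\to 0$ simply because each factor lies in $(0,1)$, but that inference is not valid in general (take $\tau_k = k^{-2}$). Your observation that one needs $\sum_k \tau_k = +\infty$ is the correct sufficient condition, and your suggestion to state it explicitly is well taken. So your proof follows the paper's route but closes a small gap in it.
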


\begin{proof}
    For any $K \in \mathbb{N} \setminus \{0\}$, we have
    \begin{equation}
        q_{\vartheta_K}^{(\alpha)}(T) = \left( \prod_{k=0}^{K-1} \frac{1}{1 + \tau_k} \right) q_{\vartheta_0}^{(\alpha)}(T) + \left(1 - \prod_{k=0}^{K-1} \frac{1}{1 + \tau_k} \right) \pi_{|S_{\lambda}}(T).
    \end{equation}
    Since $\frac{1}{1 + \tau_k} \in (0,1)$ for every $k \in \mathbb{N}$, $\prod_{k=0}^{K-1} \frac{1}{1 + \tau_k} \xrightarrow[K \rightarrow +\infty]{} 0$, showing the result.
\end{proof}

\begin{remark}
    When $\lambda = 0$, Algorithm \ref{alg:proxVI} identifies with the Bregman proximal algorithm from~\cite{bauschke2003, teboulle2018}. Further, Proposition \ref{prop:convergenceProxVI} shows that Algorithm \ref{alg:proxVI} produces iterates converging to the solution of Problem \eqref{pblm:VI}.
\end{remark}

Algorithm~\ref{alg:proxVI} involves at every iteration the computation of $\pi^{(\alpha)}_{|S_{\lambda}}(T)$. This quantity is in general unavailable. Actually, the updates in Algorithm~\ref{alg:proxVI} allow to build an alternative estimate for $\pi^{(\alpha)}_{|S_{\lambda}}(T)$ at every iteration, and to combine them using a step-size parameter $\tau_k \xrightarrow[k \rightarrow +\infty]{} 0$ in the spirit of stochastic approximation algorithms. This will be illustrated in Section \ref{subsection:numericsVI}.

\subsection{Maximum likelihood estimation}
\label{subVariationalsection:mle}

We consider now the maximum likelihood problem of estimating the parameters of a distribution from the $\lambda$-exponential family $\mathcal{Q}_{\lambda}$ based on observed data $\{ x_i \}_{i=1}^N$. This problem reads as follows.

\begin{equation}
    \label{pblm:MLE}
    \tag{$P_{\text{MLE}}$}
    \maximize_{\vartheta \in \domain \varphi_{\lambda}} \sum_{i=1}^N \log q_{\vartheta}(x_i).
\end{equation}

We need the following assumption on the data to ensure the well-posedness of Problem \eqref{pblm:MLE}.

\begin{assumption}
    \label{assumption:data}
    For every $q_{\vartheta} \in \mathcal{Q}_{\lambda}$, $x_i \in S_{\vartheta}$ for every $i \in \{ 1,\dots, N\}$.
\end{assumption}

\subsubsection{Optimality conditions and approximate solutions}

We now provide novel conditions for the resolution of Problem \eqref{pblm:MLE}. These conditions are optimal in the case of the standard exponential family. In the case of the $\lambda$-exponential family, the conditions are sub-optimal and we relate them explicitly to the optimal solutions of Problem \eqref{pblm:MLE}.

\begin{proposition}
    \label{prop:optimalityMLE}
    Consider $\lambda \in \mathbb{R}$ such that $\alpha = 1 - \lambda$ is positive, and the $\lambda$-exponential family and data $\{ x_i \}_{i=1}^N$ such that that Assumptions \ref{assumption:expFamily}, \ref{assumption:supportCondition}, and \ref{assumption:data} hold. Suppose that there exists $\vartheta_* \in \domain \varphi_{\lambda}$ such that 
    \begin{equation}
        q_{\vartheta_*}^{(\alpha)}(T) = \frac{1}{N} \sum_{i=1}^N T(x_i).
    \end{equation}

    \begin{itemize}
        \item[$(i)$] If $\lambda = 0$, $\vartheta_*$ maximizes Problem \eqref{pblm:MLE}.
        \item[$(ii)$] If $\lambda < 0$, $\vartheta_*$ maximizes a lower bound of $\vartheta \longmapsto \sum_{i=1}^N \log q_{\vartheta}(x_i)$ over $\domain \varphi_{\lambda}$. Moreover,
        \begin{equation}
            \frac{1}{N}\sum_{i=1}^N \log q_{\vartheta_*}(x_i) \geq \psi_{\lambda}(\vartheta_*).
        \end{equation}
        \item[$(iii)$] If $\lambda > 0$, $\vartheta_*$ maximizes an upper bound of $\vartheta \longmapsto \sum_{i=1}^N \log q_{\vartheta}(x_i)$ over $\domain \varphi_{\lambda}$. Moreover,
        \begin{equation}
            \frac{1}{N}\sum_{i=1}^N \log q_{\vartheta}(x_i) \leq \psi_{\lambda}(\vartheta_*),\,\forall \vartheta \in \domain \varphi_{\lambda}.
        \end{equation}
    \end{itemize}
\end{proposition}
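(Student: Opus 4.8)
The plan is to rewrite the log-likelihood using the definition of the $\lambda$-exponential family density and then recognize the resulting expression as (a multiple of) the objective studied in Proposition \ref{prop:optResult2}, with the empirical average of sufficient statistics playing the role of $\sum_{i=1}^N \rho_i \bar{T}_i$. Concretely, from \eqref{eq:lambdaExpFamilyDensity} we have $\log q_{\vartheta}(x_i) = c_{\lambda}(\vartheta, T(x_i)) - \varphi_{\lambda}(\vartheta)$, which is finite for every $i$ thanks to Assumption \ref{assumption:data} (so $x_i \in S_{\vartheta}$ and the coupling is real-valued). Dividing by $N$, I would write
\begin{equation*}
    \frac{1}{N} \sum_{i=1}^N \log q_{\vartheta}(x_i) = \frac{1}{N} \sum_{i=1}^N c_{\lambda}(\vartheta, T(x_i)) - \varphi_{\lambda}(\vartheta) = -\left( \varphi_{\lambda}(\vartheta) - \sum_{i=1}^N \frac{1}{N} c_{\lambda}(\vartheta, T(x_i)) \right).
\end{equation*}
So maximizing $\vartheta \mapsto \sum_{i=1}^N \log q_{\vartheta}(x_i)$ is equivalent to minimizing $\vartheta \mapsto \varphi_{\lambda}(\vartheta) - \sum_{i=1}^N \rho_i c_{\lambda}(\vartheta, \bar{T}_i)$ with $\bar{T}_i = T(x_i)$ and $\rho_i = 1/N$.

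Next I would check the hypotheses of Proposition \ref{prop:optResult2}: the $\rho_i$ are non-negative and sum to one; each $c_{\lambda}(\vartheta, T(x_i))$ is real on $\domain \varphi_{\lambda}$ by Assumption \ref{assumption:data}; and the standing hypothesis of the present proposition is precisely that there is $\vartheta_*$ with $\frac{1}{N}\sum_i T(x_i) = q_{\vartheta_*}^{(\alpha)}(T) \in \partial^{c_{\lambda}} \varphi_{\lambda}(\vartheta_*)$, where the membership in the $c_{\lambda}$-subdifferential comes from Proposition \ref{prop:gradientMoment} (valid under Assumptions \ref{assumption:expFamily} and \ref{assumption:supportCondition}). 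With these in hand, the three cases follow by translating the conclusions of Proposition \ref{prop:optResult2} through the sign change from the "minimize" to the "maximize" formulation. Case $(i)$ ($\lambda = 0$): Proposition \ref{prop:optResult2}$(i)$ says $\vartheta_*$ minimizes $\varphi_{\lambda}(\vartheta) - \sum_i \rho_i c_{\lambda}(\vartheta,\bar T_i)$ exactly, so $\vartheta_*$ maximizes the log-likelihood. Case $(ii)$ ($\lambda < 0$): Proposition \ref{prop:optResult2}$(ii)$ gives that $\varphi_{\lambda}(\vartheta) - \sum_i \rho_i c_{\lambda}(\vartheta, \bar T_i)$ is bounded above by $\varphi_{\lambda}(\vartheta) - c_{\lambda}(\vartheta, \sum_i \rho_i \bar T_i)$, which $\vartheta_*$ minimizes; negating, $\frac{1}{N}\sum_i \log q_{\vartheta}(x_i)$ is bounded below by $c_{\lambda}(\vartheta, \sum_i \rho_i \bar T_i) - \varphi_{\lambda}(\vartheta)$, and $\vartheta_*$ maximizes this lower bound. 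The "moreover" inequality follows from the displayed bound in Proposition \ref{prop:optResult2}$(ii)$: $\varphi_{\lambda}(\vartheta_*) - \sum_i \rho_i c_{\lambda}(\vartheta_*, \bar T_i) \le -\varphi_{\lambda}^{c_{\lambda}}(\sum_i \rho_i \bar T_i) = -\varphi_{\lambda}^{c_{\lambda}}(q_{\vartheta_*}^{(\alpha)}(T)) = -\psi_{\lambda}(\vartheta_*)$, using \eqref{eq:entropyConjugate} from Proposition \ref{prop:gradientMoment}; negating gives $\frac{1}{N}\sum_i \log q_{\vartheta_*}(x_i) \ge \psi_{\lambda}(\vartheta_*)$. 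Case $(iii)$ ($\lambda > 0$): symmetrically, Proposition \ref{prop:optResult2}$(iii)$ gives a lower bound $\varphi_{\lambda}(\vartheta) - c_{\lambda}(\vartheta, \sum_i \rho_i \bar T_i)$ on $\varphi_{\lambda}(\vartheta) - \sum_i \rho_i c_{\lambda}(\vartheta, \bar T_i)$, with equality value $-\varphi_{\lambda}^{c_{\lambda}}(\sum_i \rho_i \bar T_i) = -\psi_{\lambda}(\vartheta_*)$ at $\vartheta_*$; negating turns this into an upper bound on the per-sample log-likelihood maximized by $\vartheta_*$, and the displayed inequality reads $\frac{1}{N}\sum_i \log q_{\vartheta}(x_i) \le \psi_{\lambda}(\vartheta_*)$ for all $\vartheta$.

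I do not expect a genuine obstacle here — the proposition is essentially a packaging of Proposition \ref{prop:optResult2} and Proposition \ref{prop:gradientMoment} in the maximum likelihood language. The only points requiring care are bookkeeping ones: keeping the sign flips straight when passing from "minimize $\varphi_\lambda - \sum\rho_i c_\lambda$" to "maximize $\sum\log q_\vartheta$", and making sure the constant terms $\psi_\lambda(\vartheta_*)$ are correctly identified with $-\varphi_{\lambda}^{c_{\lambda}}(q_{\vartheta_*}^{(\alpha)}(T))$ via \eqref{eq:entropyConjugate}. I would also remark explicitly, at the start, that when $\lambda = 0$ the coupling is linear so no upper/lower bound is needed — this is what Proposition \ref{prop:optResult2}$(i)$ encodes — which is why case $(i)$ yields an exact optimality statement rather than a one-sided one.
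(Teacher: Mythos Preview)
Your proposal is correct and follows essentially the same route as the paper: rewrite the (normalized) negative log-likelihood as $\varphi_{\lambda}(\vartheta) - \frac{1}{N}\sum_i c_{\lambda}(\vartheta, T(x_i))$, invoke Proposition~\ref{prop:optResult2} with $\bar T_i = T(x_i)$ and $\rho_i = 1/N$ (Assumption~\ref{assumption:data} supplying the finiteness of the couplings), and use Proposition~\ref{prop:gradientMoment} to identify $\sum_i \rho_i \bar T_i = q_{\vartheta_*}^{(\alpha)}(T) \in \partial^{c_{\lambda}}\varphi_{\lambda}(\vartheta_*)$ and $\varphi_{\lambda}^{c_{\lambda}}(q_{\vartheta_*}^{(\alpha)}(T)) = \psi_{\lambda}(\vartheta_*)$. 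Your write-up is in fact more detailed than the paper's, which simply points to Propositions~\ref{prop:optResult2} and~\ref{prop:gradientMoment} after the rewriting step.
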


\begin{proof}
    Remark first that solving Problem \eqref{pblm:MLE} is equivalent to solving 
    \begin{equation*}
        \minimize_{\vartheta \in \domain \varphi_{\lambda}} -\frac{1}{N}\sum_{i=1}^N \log q_{\vartheta}(x_i) = \varphi_{\lambda}(\vartheta)-\frac{1}{N}\sum_{i=1}^N c_{\lambda}(\vartheta, T(x_i)).
    \end{equation*}
    Assumption \ref{assumption:data} ensures that we can apply Proposition \ref{prop:optResult2}. The result comes from the results of this Proposition and the description of $\partial^{c_{\lambda}} \varphi_{\lambda}$ and $\varphi_{\lambda}^{c_{\lambda}}$ provided in Proposition \ref{prop:gradientMoment}.
\end{proof}

\begin{corollary}
\label{corollary:studentMLE}
Consider Problem \eqref{pblm:MLE} with data points $x_i \in \mathbb{R}^d$ for $i = \{1,\dots, N\}$. 
\begin{itemize}
    \item[$(i)$] If we consider Problem \eqref{pblm:MLE} over the family of Student distributions in dimension $d$ with $\nu$ degrees of freedom $\mathcal{T}_{\nu}^d$, the distribution $q_{\mu_*, \Sigma_*} \in \mathcal{T}_{\nu}^d$ such that
    \begin{equation}
        \label{eq:corollaryMLE_OptCond}
        \begin{cases}
            \mu_* &= \frac{1}{N} \sum_{i=1}^N x_i,\\
            \Sigma_* &= \frac{1}{N}\sum_{i=1}^N x_i x_i^{\top} - \mu_* \mu_*^{\top},
        \end{cases}
    \end{equation}
    maximizes a lower bound of Problem \eqref{pblm:MLE}. We also get that
    \begin{align}
        \frac{1}{N}\sum_{i=1}^N \log q_{\mu_*, \Sigma_*}(x_i) \geq \frac{1}{2}\logdet(\Sigma_*) + C,
    \end{align}
    where the constant $C$ depends only on $d$ and $\nu$.
    \item[$(ii)$] If we consider Problem \eqref{pblm:MLE} over the family of Gaussian distributions $\mathcal{G}^d$, the distribution $q_{\mu_*, \Sigma_*} \in \mathcal{G}^d$ with $\mu_*, \Sigma_*$ satisfying Equation \eqref{eq:corollaryMLE_OptCond} maximizes Problem \eqref{pblm:MLE}.
\end{itemize}
\end{corollary}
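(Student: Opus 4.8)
The plan is to specialize Proposition~\ref{prop:optimalityMLE} to the two families at hand, translating its abstract escort-moment optimality condition via the concrete formulas established in Proposition~\ref{prop:studentFamily} and its subsequent remark.

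For part~$(i)$, I would first invoke Proposition~\ref{prop:studentFamily}$(i)$ to write $\mathcal{T}_\nu^d$ as a $\lambda$-exponential family with $\lambda = -\tfrac{2}{\nu+d} < 0$, sufficient statistics $T(x) = (x, xx^\top)$, and associated $\alpha = 1-\lambda = 1+\tfrac{2}{\nu+d}$, noting that Assumptions~\ref{assumption:expFamily} and~\ref{assumption:supportCondition} are satisfied. Since every Student density is strictly positive on $\mathbb{R}^d$, one has $S_\vartheta = \mathbb{R}^d$ for all $\vartheta$, so Assumption~\ref{assumption:data} holds automatically for any data $x_i \in \mathbb{R}^d$; hence Proposition~\ref{prop:optimalityMLE}$(ii)$ applies. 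Its hypothesis $q_{\vartheta_*}^{(\alpha)}(T) = \tfrac1N\sum_i T(x_i)$ splits, component by component, into $q_{\mu_*,\Sigma_*}^{(\alpha)}(x) = \tfrac1N\sum_i x_i$ and $q_{\mu_*,\Sigma_*}^{(\alpha)}(xx^\top) = \tfrac1N\sum_i x_i x_i^\top$; substituting the escort-moment identities $q_{\mu,\Sigma}^{(\alpha)}(x) = \mu$ and $q_{\mu,\Sigma}^{(\alpha)}(xx^\top) = \Sigma + \mu\mu^\top$ of Proposition~\ref{prop:studentFamily}$(ii)$ gives exactly the system~\eqref{eq:corollaryMLE_OptCond}. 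Proposition~\ref{prop:optimalityMLE}$(ii)$ then both asserts that this $\vartheta_*$ maximizes a lower bound of the log-likelihood and yields $\tfrac1N\sum_i \log q_{\vartheta_*}(x_i) \ge \psi_\lambda(\vartheta_*)$; inserting $\psi_\lambda(\vartheta_*) = \tfrac12\logdet(\Sigma_*) + C$ from Proposition~\ref{prop:studentFamily}$(ii)$ produces the stated inequality with $C = C(d,\nu)$.

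Part~$(ii)$ is the same argument at $\lambda = 0$, $\alpha = 1$: by the remark following Proposition~\ref{prop:studentFamily}, $\mathcal{G}^d$ is the exponential family with the same sufficient statistics, satisfies Assumptions~\ref{assumption:expFamily} and~\ref{assumption:supportCondition}, and --- having full support --- satisfies Assumption~\ref{assumption:data} trivially. Here the escort coincides with the distribution itself, with $q_{\mu,\Sigma}(x) = \mu$ and $q_{\mu,\Sigma}(xx^\top) = \Sigma + \mu\mu^\top$, so Proposition~\ref{prop:optimalityMLE}$(i)$ applies directly and identifies~\eqref{eq:corollaryMLE_OptCond} as the condition for a genuine maximizer of~\eqref{pblm:MLE}.

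There is no substantial obstacle: the whole argument is a dictionary translation of earlier results. The only points deserving a word of care are checking that the hypotheses of Proposition~\ref{prop:optimalityMLE} actually hold here (Assumption~\ref{assumption:data} being the non-trivial-looking one, yet immediate from the full support of Student and Gaussian densities), and observing that the corollary's phrasing tacitly assumes that the empirical second-moment data give a valid member of the family --- i.e.\ that $\tfrac1N\sum_i x_i x_i^\top - \mu_*\mu_*^\top$ is positive definite --- so that the distribution $q_{\mu_*,\Sigma_*}$ exists; existence of the corresponding $\vartheta_* \in \domain\varphi_\lambda$ is then furnished by the bijection in Proposition~\ref{prop:studentFamily}$(ii)$.
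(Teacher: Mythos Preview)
Your proposal is correct and matches the paper's intent: the corollary is stated without proof precisely because it is an immediate specialization of Proposition~\ref{prop:optimalityMLE} via the Student-family identities of Proposition~\ref{prop:studentFamily} (and the Gaussian remark following it), exactly as you outline. Your additional remarks on verifying Assumption~\ref{assumption:data} through full support and on the implicit positive-definiteness of $\Sigma_*$ are apt and make explicit what the paper leaves tacit.
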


\subsubsection{An iterative algorithm for maximum likelihood estimation}

We now propose a new iterative algorithm to reach the (sub-optimal) solutions to Problem \eqref{pblm:MLE}, as characterized in Proposition \ref{prop:optimalityMLE}. To do so, we first introduce the following operator.

\begin{definition}
\label{def:proxMLE}
Consider $\alpha > 0, \lambda = 1 - \alpha$ and the $\lambda$-exponential family $\mathcal{Q}_{\lambda}$ under Assumptions \ref{assumption:expFamily} and \ref{assumption:supportCondition}. Consider data points $\{x_i\}_{i=1}^N$ satisfying Assumption \ref{assumption:data}. For $\tau > 0$, we define the \emph{operator $P_{\tau}^{\{x_i\}_{i=1}^N}$} such that for any $\vartheta' \in \domain \varphi_{\lambda}$, $\vartheta_{P} = P_{\tau}^{\{x_i\}_{i=1}^N}(\vartheta')$ satisfies
\begin{equation}
    q_{\vartheta_P}^{(\alpha)}(T) = \frac{N\tau}{1 + N\tau}\sum_{i=1}^N T(x_i) + \frac{1}{1 + N\tau} q_{\vartheta'}^{(\alpha)}(T).
\end{equation}
\end{definition}

This operator can be related to the proximal operator from Definition \ref{def:proximalOperatorGen}, as we show now.

\begin{proposition}
    \label{prop:proxMLE}
    Consider a $\lambda$-exponential family $\mathcal{Q}_{\lambda}$ with $\lambda \in \mathbb{R}$ such that $\alpha = 1-\lambda$ is positive, and observed data $\{ x_i \}_{i=1}^N$ such that Assumptions \ref{assumption:expFamily}, \ref{assumption:supportCondition}, and \ref{assumption:data} are satisfied. Let $\tau > 0$. Then,
    \begin{itemize}
        \item[$(i)$] If $\lambda = 0$, then 
        \begin{equation}
            P_{\tau}^{\{x_i\}_{i=1}^N}(\vartheta') = \prox_{\tau}^{-\sum_{i=1}^N \log q_{\cdot}(x_i)}(\vartheta'),\,\forall \vartheta' \in \domain \varphi_{\lambda}.
        \end{equation}
       \item[$(ii)$] If $\lambda < 0$ (resp.~$\lambda > 0$), then $P_{\tau}^{\{x_i\}_{i=1}^N}(\vartheta')$ approximates $\prox_{\tau}^{-\sum_{i=1}^N \log q_{\cdot}(x_i)}(\vartheta')$ in the sense that it minimizes an upper bound (resp.~lower bound) of the proximal loss 
       \begin{equation}
            \vartheta \longmapsto -\sum_{i=1}^N \log q_{\vartheta}(x_i) + \frac{1}{\tau} RD_{\alpha}(q_{\vartheta'},q_{\vartheta}).
       \end{equation}
    \end{itemize}
\end{proposition}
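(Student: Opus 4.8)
The plan is to mimic the structure of the proof of Proposition \ref{prop:proxVI}, replacing the target-dependent term $RD_{\alpha}(\pi, q_{\vartheta})$ by the negative log-likelihood $-\sum_{i=1}^N \log q_{\vartheta}(x_i)$ and tracking the additional combinatorial bookkeeping coming from the $N$ data points. First I would write out the objective function defining $\prox_{\tau}^{-\sum_{i=1}^N \log q_{\cdot}(x_i)}(\vartheta')$, namely $\vartheta \longmapsto -\sum_{i=1}^N \log q_{\vartheta}(x_i) + \frac{1}{\tau} RD_{\alpha}(q_{\vartheta'}, q_{\vartheta})$, and expand each piece: using Definition \ref{def:lambda-expFamily} we have $-\sum_{i=1}^N \log q_{\vartheta}(x_i) = N\varphi_{\lambda}(\vartheta) - \sum_{i=1}^N c_{\lambda}(\vartheta, T(x_i))$, while Proposition \ref{prop:rewritingRenyi} (Equation \eqref{eq:rewritingRenyiLambdaExp}) gives $RD_{\alpha}(q_{\vartheta'},q_{\vartheta}) = \varphi_{\lambda}(\vartheta) - c_{\lambda}(\vartheta, q_{\vartheta'}^{(\alpha)}(T)) + \psi_{\lambda}(\vartheta')$.

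Next I would collect terms. Summing the two contributions yields a factor of $N + \frac{1}{\tau} = \frac{1 + N\tau}{\tau}$ in front of $\varphi_{\lambda}(\vartheta)$. Pulling this positive constant out, the $\vartheta$-dependent part of the proximal loss is proportional to
\begin{equation*}
    \varphi_{\lambda}(\vartheta) - \frac{\tau}{1+N\tau}\sum_{i=1}^N c_{\lambda}(\vartheta, T(x_i)) - \frac{1}{1+N\tau} c_{\lambda}(\vartheta, q_{\vartheta'}^{(\alpha)}(T)),
\end{equation*}
since $\psi_{\lambda}(\vartheta')$ and $-H_{\alpha}$-type constants do not depend on $\vartheta$. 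The crucial observation is that the coefficients $\frac{\tau}{1+N\tau}$ (appearing $N$ times, once per data point) and $\frac{1}{1+N\tau}$ sum to $\frac{N\tau + 1}{1 + N\tau} = 1$, so this is exactly a convex-combination objective of the form handled by Proposition \ref{prop:optResult2} with $N+1$ "observations" $\{T(x_1), \dots, T(x_N), q_{\vartheta'}^{(\alpha)}(T)\}$ and weights $\{\frac{\tau}{1+N\tau}, \dots, \frac{\tau}{1+N\tau}, \frac{1}{1+N\tau}\}$.

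I would then check the hypotheses of Proposition \ref{prop:optResult2}: Assumption \ref{assumption:data} guarantees $c_{\lambda}(\vartheta, T(x_i)) \in \mathbb{R}$ for all $\vartheta \in \domain \varphi_{\lambda}$, and Assumption \ref{assumption:supportCondition} together with Lemma \ref{lemma:wellPosednessCouplingMoment} guarantees $c_{\lambda}(\vartheta, q_{\vartheta'}^{(\alpha)}(T)) \in \mathbb{R}$. Proposition \ref{prop:gradientMoment} identifies $q_{\vartheta_P}^{(\alpha)}(T) \in \partial^{c_{\lambda}}\varphi_{\lambda}(\vartheta_P)$, and since by Definition \ref{def:proxMLE} we have $q_{\vartheta_P}^{(\alpha)}(T) = \frac{\tau}{1+N\tau}\sum_{i=1}^N T(x_i) + \frac{1}{1+N\tau} q_{\vartheta'}^{(\alpha)}(T)$ — which is precisely the weighted combination $\sum_i \rho_i \bar{T}_i$ — the hypothesis of Proposition \ref{prop:optResult2} that such a $\vartheta_*$ exists in $\partial^{c_{\lambda}}\varphi_{\lambda}$ is met with $\vartheta_* = \vartheta_P$. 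Applying case $(i)$ of Proposition \ref{prop:optResult2} when $\lambda = 0$ gives the exact proximal identity; applying cases $(ii)$ and $(iii)$ gives, respectively, that $\vartheta_P$ minimizes an upper bound (when $\lambda < 0$) or a lower bound (when $\lambda > 0$) of the $\vartheta$-dependent part, hence of the full proximal loss. I do not anticipate a genuine obstacle here: the proof is essentially a transcription of the argument for Proposition \ref{prop:proxVI}, and the only place requiring care is the arithmetic of the normalizing factor $\frac{1+N\tau}{\tau}$ and verifying that the $N+1$ weights indeed sum to one so that Proposition \ref{prop:optResult2} applies verbatim.
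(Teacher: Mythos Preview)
Your proposal is correct and follows essentially the same approach as the paper's own proof: expand the proximal loss via Definition \ref{def:lambda-expFamily} and Proposition \ref{prop:rewritingRenyi}, factor out $\frac{1+N\tau}{\tau}$, observe that the remaining weights sum to one, and then invoke Proposition \ref{prop:optResult2} (together with Proposition \ref{prop:gradientMoment}, Assumption \ref{assumption:data}, and Lemma \ref{lemma:wellPosednessCouplingMoment}) exactly as in the proof of Proposition \ref{prop:proxVI}. If anything, you are more explicit than the paper, which simply writes ``one can conclude as in the proof of Proposition \ref{prop:proxVI}.''
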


\begin{proof}
    We first decompose the objective function appearing in $\prox_{\tau}^{-\sum_{i=1}^N \log q_{\cdot}(x_i)}(\vartheta')$:
    \begin{align*}
        &-\sum_{i=1}^N \log q_{\vartheta}(x_i) + \frac{1}{\tau} RD_{\alpha}(q_{\vartheta'}, q_{\vartheta})\\
        &= N \varphi_{\lambda}(\vartheta) - \sum_{i=1}^N c_{\lambda}(\vartheta, T(x_i)) + \frac{1}{\tau} \left(\varphi_{\lambda}(\vartheta) - c_{\lambda}(\vartheta, q_{\vartheta'}^{(\alpha)}(T)) + \psi_{\lambda}(\vartheta')\right)\\
        &=\left(\frac{1+ N\tau}{\tau} \right) \left( \varphi_{\lambda}(\vartheta) - \sum_{i=1}^N \frac{\tau}{1 + N\tau}c_{\lambda}(\vartheta, T(x_i)) - \frac{1}{1+N\tau}c_{\lambda}(\vartheta, q_{\vartheta'}^{(\alpha)}(T)) \right) + \frac{1}{\tau}\psi_{\lambda}(\vartheta').
    \end{align*}

    The above calculation shows that computing $\prox_{\tau}^{\{x_i\}}(\vartheta')$ is equivalent to solving 
    \begin{equation*}
        \minimize_{\vartheta \in \domain \varphi_{\lambda}} \varphi_{\lambda}(\vartheta) - \sum_{i=1}^N \frac{\tau}{1 + N\tau}c_{\lambda}(\vartheta, T(x_i)) - \frac{1}{1+N\tau}c_{\lambda}(\vartheta, q_{\vartheta'}^{(\alpha)}(T)).
    \end{equation*}
    Then, one can conclude as in the proof of Proposition \ref{prop:proxVI}.
\end{proof}

We are now ready to introduce our algorithm to reach the solutions given in Proposition \ref{prop:optimalityMLE}, and as such, solving (approximatly) Problem \eqref{pblm:MLE}.

\begin{algorithm}[H]
     Let $\vartheta_0 \in \domain \varphi_{\lambda}$, and a sequence $\{ \tau_k\}_{k \in \mathbb{N}}$ of step parameters in $\mathbb{R}_{++}$.\\
     \For{$k = 0,\dots$}{
     Update $\vartheta_{k+1}$ using $P_{\tau_k}^{\{x_i\}_{i=1}^N}$, that is such that
     \begin{equation}
        \label{eq:proxLikeIterateMLE}
        q_{\vartheta_{k+1}}^{(\alpha)}(T) = \frac{N\tau_k}{1 + N\tau_k} \sum_{i=1}^N T(x_i) + \frac{1}{1+N\tau_k} q_{\vartheta_k}^{(\alpha)}(T).
     \end{equation}
     }
     \caption{Proposed algorithm to solve Problem \eqref{pblm:MLE}}
     \label{alg:proxMLE}
\end{algorithm}

\begin{proposition}
    \label{prop:convergenceProxMLE}
    If $\vartheta_k \in \domain \varphi_{\lambda}$ for every $k\in \mathbb{N}$, then the sequence generated by Algorithm~\ref{alg:proxMLE} is well-defined and 
    \begin{equation}
        q_{\vartheta_k}^{(\alpha)}(T) \xrightarrow[k \rightarrow +\infty]{} \frac{1}{N} \sum_{i=1}^N T(x_i).
    \end{equation}
\end{proposition}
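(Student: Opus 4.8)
The plan is to mirror the proof of Proposition \ref{prop:convergenceProxVI}, using that the update of Algorithm \ref{alg:proxMLE} amounts to a convex combination of the current escort moment and the empirical mean $\frac{1}{N}\sum_{i=1}^N T(x_i)$ of the sufficient statistic. First I would record well-posedness: under the standing hypothesis $\vartheta_k \in \domain \varphi_{\lambda}$ for every $k$, Assumption \ref{assumption:supportCondition} and Proposition \ref{prop:gradientMoment} ensure that each $q_{\vartheta_k}^{(\alpha)}(T)$ is a well-defined element of $\mathcal{H}$, while Assumption \ref{assumption:data} makes the operator $P_{\tau_k}^{\{x_i\}_{i=1}^N}$ of Definition \ref{def:proxMLE} meaningful, so the sequence is well-defined. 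The structural observation is that the iteration may be written as
\[
q_{\vartheta_{k+1}}^{(\alpha)}(T) = \frac{1}{1+N\tau_k}\, q_{\vartheta_k}^{(\alpha)}(T) + \left(1 - \frac{1}{1+N\tau_k}\right)\frac{1}{N}\sum_{i=1}^N T(x_i),
\]
with contraction factor $\frac{1}{1+N\tau_k} \in (0,1)$ since $\tau_k > 0$ and $N \geq 1$.

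Next I would unroll this recursion by induction on $K$, obtaining for every $K \in \mathbb{N}\setminus\{0\}$
\[
q_{\vartheta_K}^{(\alpha)}(T) = \left(\prod_{k=0}^{K-1}\frac{1}{1+N\tau_k}\right) q_{\vartheta_0}^{(\alpha)}(T) + \left(1 - \prod_{k=0}^{K-1}\frac{1}{1+N\tau_k}\right)\frac{1}{N}\sum_{i=1}^N T(x_i).
\]
The base case $K=1$ is the first iterate; the inductive step substitutes the level-$K$ formula into the displayed update and simplifies via the identity $\frac{1}{1+N\tau_K}\left(1 - \prod_{k=0}^{K-1}\frac{1}{1+N\tau_k}\right) + \left(1 - \frac{1}{1+N\tau_K}\right) = 1 - \prod_{k=0}^{K}\frac{1}{1+N\tau_k}$.

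Finally, I would let $K \to +\infty$: the partial products $\prod_{k=0}^{K-1}\frac{1}{1+N\tau_k}$ are non-increasing in $(0,1)$ and tend to $0$ (as in the proof of Proposition \ref{prop:convergenceProxVI}; in full rigour this needs $\sum_k \tau_k = +\infty$, the natural stochastic-approximation regime evoked after Algorithm \ref{alg:proxVI}), so the closed form yields $q_{\vartheta_K}^{(\alpha)}(T) \to \frac{1}{N}\sum_{i=1}^N T(x_i)$, which is the claim. There is no genuine obstacle beyond recognizing the convex-combination structure of the update; the only step deserving care is the vanishing of the infinite product, handled exactly as in the variational-inference counterpart.
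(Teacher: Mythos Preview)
Your proposal is correct and follows exactly the approach the paper takes: the paper's proof simply says ``The proof follows the same step as the proof of Proposition \ref{prop:convergenceProxVI},'' which is precisely the unrolling of the convex-combination recursion into a product formula and letting the product tend to zero. You even flag the need for $\sum_k \tau_k = +\infty$ to make the infinite product vanish, a point the paper leaves implicit.
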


\begin{proof}
    The proof follows the same step as the proof of Proposition \ref{prop:convergenceProxVI}.
\end{proof}

\begin{remark}
    In the case $\lambda = 0$, Algorithm \ref{alg:proxMLE} is a Bregman proximal algorithm \cite{bauschke2003, teboulle2018} that converges to the solution of Problem \eqref{pblm:MLE}. In the case $\lambda \neq 0$, we have the convergence to the sub-optimal solutions described in Proposition \ref{prop:optimalityMLE}.
\end{remark}

The algorithm obtained by applying the update of Equation \eqref{eq:proxLikeIterateMLE} can for instance be used in an online context, where all the data points are not available at every iteration. This will be illustrated in Section \ref{subsection:numericsMLE}.

\subsubsection{An expectation-maximization algorithm for mixture MLE}

We consider here a variant of Problem \eqref{pblm:MLE} where we aim at estimating the parameters of a mixture of $J \in \mathbb{N}$, $J > 0$, distributions from the $\lambda$-exponential family $\mathcal{Q}_{\lambda}$, based on observed data $\{x_i\}_{i=1}^N$. The problem is over the parameters of each component of the mixture, as well as over their weights, and reads as follows.
\begin{equation}
    \label{pblm:MixtMLE}
    \tag{$P_{\text{MLE-Mixt}}$}
    \maximize_{\substack{\xi_j \geq 0, \vartheta_j \in \domain \varphi_{\lambda},\,j=1,\dots,J\\\sum_{j=1}^J \xi_j = 1}} \sum_{i=1}^N \log \left( \sum_{j=1}^J \xi_j q_{\vartheta_j}(x_i) \right).
\end{equation}

A standard algorithm to solve this type of problem is the expectation-maximization (EM) algorithm \cite{bishop2006}, that generates a sequence of weights $\{\xi_{j,k}\}_{k \in \mathbb{N}}$ and parameters $\{ \vartheta_{j,k}\}_{k \in \mathbb{N}}$ for $j=1,\dots,J$. For any $j=1,\dots,J$ and iteration $k \in \mathbb{N}$, we denote by $\gamma_{k,j}$ the function defined by
\begin{equation}
    \label{eq:definitionGamma}
    \gamma_{k,j}(x) = \frac{\xi_{k,j}q_{\vartheta_{k,j}}(x)}{\sum_{j'=1}^J \xi_{k,j'}q_{\vartheta_{k,j'}}(x)}.
\end{equation}
It is then possible to apply the EM algorithm in our setting, yielding updates of the form
\begin{align}
    \xi_{k+1,j} &= \frac{1}{N} \sum_{n=1}^N \gamma_{k,j}(x_n)\\
    \vartheta_{k+1,j} &= \argmax_{\vartheta \in \domain \varphi_{\lambda}} \sum_{i=1}^N \gamma_{k,j}(x_i) \log p_{\vartheta}(x_i).\label{eq:varthetaEMupdate}
\end{align}

The maximization step, often called the M-step, appearing in the update \eqref{eq:varthetaEMupdate} does not always have a closed-form. We now give a result about explicit solutions of these steps, that are possibly optimal, leveraging tools from our Proposition \ref{prop:optimalityMLE}.

\begin{proposition}
    \label{prop:EM}
    Consider a $\lambda$-exponential family $\mathcal{Q}_{\lambda}$ with $\lambda \in \mathbb{R}$ such that $\alpha = 1-\lambda$ is positive, and observed data $\{ x_i \}_{i=1}^N$ such that Assumptions \ref{assumption:expFamily}, \ref{assumption:supportCondition}, and \ref{assumption:data} are satisfied. Suppose that there exists $\vartheta_{k+1,j} \in \domain \varphi_{\lambda}$ such that
    \begin{equation}
        q_{\vartheta_{k+1,j}}^{(\alpha)} = \sum_{i=1}^N \frac{\gamma_{k,j}(x_i)}{\sum_{i'=1}^N \gamma_{k,j}(x_{i'})}  T(x_i).
    \end{equation}
    \begin{itemize}
        \item[$(i)$] If $\lambda = 0$, then $\vartheta_{k+1,j}$ exactly solves the optimization problem in the update \eqref{eq:varthetaEMupdate}.
        
       \item[$(ii)$] If $\lambda < 0$ (resp.~$\lambda > 0$), then $\vartheta_{k+1,j}$ approximates the solution of the update \eqref{eq:varthetaEMupdate} in the sense that it maximizes a lower bound (resp.~upper bound) of the considered loss.
    \end{itemize}
\end{proposition}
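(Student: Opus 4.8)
The plan is to recognize the M-step objective in~\eqref{eq:varthetaEMupdate} as an instance of a weighted maximum likelihood problem, and then apply Proposition~\ref{prop:optimalityMLE} essentially verbatim. First I would note that, since $p_\vartheta = q_\vartheta$ in the notation of the excerpt, the M-step consists in maximizing $\vartheta \longmapsto \sum_{i=1}^N \gamma_{k,j}(x_i) \log q_\vartheta(x_i)$. Setting $w_i = \gamma_{k,j}(x_i) \geq 0$ and $W = \sum_{i'=1}^N \gamma_{k,j}(x_{i'})$, which is strictly positive by~\eqref{eq:definitionGamma} (the responsibilities sum to one over $j$ and each is positive since $q_{\vartheta_{k,j}}$ has full support under Assumption~\ref{assumption:data}, so $W > 0$), we normalize by $W$ and obtain that the M-step is equivalent to minimizing
\begin{equation*}
    \vartheta \longmapsto \varphi_{\lambda}(\vartheta) - \sum_{i=1}^N \frac{w_i}{W} c_{\lambda}(\vartheta, T(x_i)),
\end{equation*}
using the identity $\log q_\vartheta(x_i) = c_\lambda(\vartheta, T(x_i)) - \varphi_\lambda(\vartheta)$ from Definition~\ref{def:lambda-expFamily}, together with $\sum_i w_i/W = 1$.

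Next I would apply Proposition~\ref{prop:optResult2} with the coefficients $\rho_i = w_i / W$ and the elements $\bar T_i = T(x_i)$. Assumption~\ref{assumption:data} guarantees $x_i \in S_\vartheta$ for all $\vartheta \in \domain \varphi_\lambda$, hence $1 + \lambda \langle \vartheta, T(x_i)\rangle > 0$ and $c_\lambda(\vartheta, T(x_i)) \in \mathbb{R}$ for every $\vartheta$, which is exactly the hypothesis required by Proposition~\ref{prop:optResult2}. The assumption that $\vartheta_{k+1,j} \in \domain \varphi_\lambda$ satisfies $q_{\vartheta_{k+1,j}}^{(\alpha)}(T) = \sum_i (w_i/W) T(x_i)$ means precisely that $\sum_i \rho_i \bar T_i \in \partial^{c_\lambda} \varphi_\lambda(\vartheta_{k+1,j})$, by the characterization of the $c_\lambda$-subdifferential of $\varphi_\lambda$ in Proposition~\ref{prop:gradientMoment}, Equation~\eqref{eq:escortMomentSubgradient}. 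So all hypotheses of Proposition~\ref{prop:optResult2} are in place.

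For part~$(i)$, case $(i)$ of Proposition~\ref{prop:optResult2} gives directly that $\vartheta_{k+1,j}$ minimizes $\vartheta \longmapsto \varphi_\lambda(\vartheta) - \sum_i \rho_i c_\lambda(\vartheta, T(x_i))$, i.e. it exactly solves the (normalized, hence the original) M-step, since for $\lambda=0$ the coupling is linear and the averaged and single-point couplings coincide. For part~$(ii)$, when $\lambda < 0$ case $(ii)$ of Proposition~\ref{prop:optResult2} states that $\vartheta_{k+1,j}$ minimizes $\vartheta \longmapsto \varphi_\lambda(\vartheta) - c_\lambda(\vartheta, \sum_i \rho_i T(x_i))$, which is an upper bound of $\vartheta \longmapsto \varphi_\lambda(\vartheta) - \sum_i \rho_i c_\lambda(\vartheta, T(x_i))$; negating, this means $\vartheta_{k+1,j}$ maximizes a lower bound of the M-step objective $\vartheta \longmapsto \sum_i w_i \log q_\vartheta(x_i)$ (up to the positive factor $W$). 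Symmetrically, when $\lambda > 0$, case $(iii)$ yields that $\vartheta_{k+1,j}$ maximizes an upper bound. Since the paper already does exactly this bookkeeping in the proof of Proposition~\ref{prop:optimalityMLE} with uniform weights $1/N$, I expect no genuine obstacle here; the only point requiring a word of care is the strict positivity of the normalizing constant $W = \sum_{i'} \gamma_{k,j}(x_{i'})$, which must be noted so the weighted average is well-defined — this follows from Assumption~\ref{assumption:data} ensuring $q_{\vartheta_{k,j}}(x_i) > 0$, so that $\gamma_{k,j}(x_i) > 0$ for every $i$.
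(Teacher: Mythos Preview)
Your proof is correct and follows essentially the same approach as the paper: rewrite the M-step objective, normalize by $W=\sum_{i'}\gamma_{k,j}(x_{i'})$, and apply Proposition~\ref{prop:optResult2} together with Proposition~\ref{prop:gradientMoment}, exactly as in the proof of Proposition~\ref{prop:optimalityMLE} with non-uniform weights. If anything, your version is slightly more careful than the paper's, since you explicitly justify $W>0$ via Assumption~\ref{assumption:data}, whereas the paper only notes that the $\gamma_{k,j}$ are non-negative.
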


\begin{proof}
    The maximization problem in the update \eqref{eq:varthetaEMupdate} is the following:
    \begin{equation}
        \maximize_{\vartheta \in \domain \varphi_{\lambda}} \sum_{i=1}^N \gamma_{k,j}(x_i) \log p_{\vartheta}(x_i),
    \end{equation}
    which is equivalent, since the functions $\gamma_{k,j}$ take non-negative values, to 
    \begin{equation}
        \maximize_{\vartheta \in \domain \varphi_{\lambda}} \sum_{i=1}^N \frac{\gamma_{k,j}(x_i)}{\sum_{i'=1}^N \gamma_{k,j}(x_{i'})} \log p_{\vartheta}(x_i).
    \end{equation}
    Finally, we re-write this optimization problem as
    \begin{equation}
        \minimize_{\vartheta \in \domain \varphi_{\lambda}} \varphi_{\lambda}(\vartheta) - \sum_{i=1}^N \frac{\gamma_{k,j}(x_i)}{\sum_{i'=1}^N \gamma_{k,j}(x_{i'})} c_{\lambda}(\vartheta, T(x_i)),
    \end{equation}
    which allows to conclude the proof as in the proof of Proposition \ref{prop:optimalityMLE}.    
\end{proof}

When $\lambda = 0$, the result of Proposition \ref{prop:EM} implies that the M-steps, that is the updates of the form \eqref{eq:varthetaEMupdate}, can be solved exactly. Since $\lambda = 0$ corresponds to the exponential family, which can represent Gaussian distributions, this result recovers the EM algorithm for Gaussian mixtures presented in \cite{bishop2006}. Otherwise, the result of Proposition \ref{prop:EM} leads to an approximate EM algorithm, where the M-steps, are only approximately solved through an explicit expression. The resulting algorithm, is summarized in Algorithm \ref{alg:EM}. 

\begin{algorithm}[H]
    Let $\vartheta_{0,j} \in \domain \varphi_{\lambda}$ and $\xi_{0,j} \geq 0$ for $j=1,\dots,J$ such that $\sum_{j=1}^J \xi_{0,j} = 1$.\\
    \For{$k = 0,\dots$}{
        For every $j=1,\dots,J$, define the function $\gamma_{k,j}$ following Equation \eqref{eq:definitionGamma}, and update the parameters $\xi_{k+1,j}$ and $\vartheta_{k+1,j}$ such that they satisfy
        \begin{align}
            \xi_{k+1,j} &= \frac{1}{N} \sum_{i=1}^N \gamma_{k,j}(x_i),\\
            q_{\theta_{k+1,j}}^{(\alpha)}(T) &= \sum_{i=1}^N \frac{\gamma_{k,j}(x_i)}{\sum_{i'=1}^N \gamma_{k,j}(x_{i'})}  T(x_i).
        \end{align}     
     }
     \caption{A sub-optimal EM algorithm to solve Problem \eqref{pblm:MixtMLE}}
     \label{alg:EM}
\end{algorithm}

\subsection{Discussion and comparison with the standard exponential family}
\label{subsection:discussion}

Let us now discuss our results for maximum likelihood, variational inference, and iterative algorithms obtained for the $\lambda$-exponential family $\mathcal{Q}_{\lambda}$. 

\subsubsection{The particular case of the exponential family}

We here discuss how our theoretical results position themselves, compared to existing results for the special case $\lambda = 0$.

We recall that for an exponential family $\mathcal{Q}$ with sufficient statistics $T$ (which is the $\lambda$-exponential family with $\lambda = 0$), the densities of the members of the family are given by Equation \eqref{eq:lambdaExpFamilyDensity} with $c_0(\cdot,\cdot) = \langle \cdot, \cdot \rangle$ and the log-partition function $\varphi$. We have proven in Proposition \ref{prop:rewritingRenyi} that for any $\pi \in \mathcal{P}(\mathcal{X},m)$ such that $H_1(\pi)$, which is the Shannon entropy of $\pi$, and $\pi(T)$ are well-defined,
\begin{equation}
    KL(\pi, q_{\vartheta}) = - H_1(\pi) - \langle \vartheta, \pi(T) \rangle + \varphi(\vartheta),\,\forall \vartheta \in \domain \varphi.
\end{equation}
In Proposition \ref{prop:gradientMoment}, we also uncovered the links between the Shannon entropy and the Fenchel conjugate of the log-partition function, and showed that the moments of a distribution from the exponential family are the subgradients of the log-partition function. These facts, although scattered in the literature, are well-known. In our Propositions \ref{prop:rewritingRenyi} and \ref{prop:gradientMoment}, we generalized them to the $\lambda$-exponential family, using $H_{\alpha}$ instead of $H_1$, $RD_{\alpha}$ instead of $KL$, $\varphi_{\lambda}$ instead of $\varphi$, and escort moments instead of standard moments.

In the case of Problem \eqref{pblm:VI}, we can have the same type of correspondence. We have proven in Proposition \ref{prop:optimalityRenyi} that Problem \eqref{pblm:VI} over $\mathcal{Q}$ with $RD_{\alpha} = KL$ is solved by $\vartheta_* \in \domain \varphi$ satisfying the moment-matching condition $q_{\vartheta_*}(T) = \pi(T)$. This optimality condition was already known, see for instance \cite{bishop2006, wainwright2008, cappe2008}. In Proposition \ref{prop:optimalityRenyi}, we generalized this optimality condition under the form of a moment-matching condition on escort probabilities, that is $\pi^{(\alpha)}(T) = q_{\vartheta_*}^{(\alpha)}(T)$.

So far, the analysis we proposed for $\lambda \neq 0$ strictly generalizes the case $\lambda = 0$. In fact, it uses the same proofs for $\lambda = 0$ and $\lambda \neq 0$. Let us now review situations where this correspondence breaks. Due to the linearity of the scalar product and the convex subdifferential, we could obtain in Proposition \ref{prop:optimalityMLE} that for $\lambda = 0$, Problem \eqref{pblm:MLE} is minimized for $\vartheta_* \in \domain \varphi$ such that the moments $q_{\vartheta_*}(T)$ match the sufficient statistics of the data $\frac{1}{N} \sum_{i=1}^N T(x_i)$. In the case $\lambda \neq 0$, the similar solution obtained by plugging escort moments $q_{\vartheta_*}^{(\alpha)}(T)$ instead of standard moments  $q_{\vartheta_*}(T)$ is only sub-optimal, as shown in Proposition \ref{prop:optimalityMLE}. More precisely, these are only the minimizers of upper or lower bounds, depending on the sign of $\lambda$. The situation is similar when designing EM algorithms, as shown in Proposition \ref{prop:EM} where optimality is only attained when $\lambda=0$, in which case Algorithm \ref{alg:EM} recovers the standard EM algorithm \cite{bishop2006}. Such results are to be expected as no closed-form solution is known for this type of maximum likelihood estimation problems, and solving these problems, notably over Student-like distribution, is still an active field of research \cite{hasanasab2021, ayadi2023}. The situation is similar for the operators defined in Definitions \ref{def:proxVI} and \ref{def:proxMLE}, since they can be seen as an exact proximal operator only for $\lambda = 0$, as shown in Propositions \ref{prop:proxVI} and \ref{prop:proxMLE}.

Let us now comment about the sub-optimality of the maximum likelihood estimator proposed in Proposition \ref{prop:optimalityMLE} by relating it with the solution of Problem \eqref{pblm:VI}. Suppose that $x_i \sim \pi$ for any $i \in \{1,\dots,N\}$ and $\frac{1}{N} \sum_{i=1}^N T(x_i) \xrightarrow[N \rightarrow +\infty]{} \pi(T)$. This means that in the limit $N \rightarrow + \infty$ and when $\lambda = 0$, Problems \eqref{pblm:MLE} and \eqref{pblm:VI} have the same solution $\vartheta_*$ such that $q_{\vartheta_*}(T) = \pi(T)$. This relation between maximum likelihood estimation and minimization of a Kullback-Leibler divergence is well-known and applies in fact in a more general setting \cite{white1982}. When $\lambda \neq 0$, the sub-optimal solution of Problem \eqref{pblm:MLE} described in Proposition \ref{prop:optimalityMLE} is such that $q_{\vartheta_*}^{(\alpha)}(T) = \pi(T)$ in the large number limit $N \rightarrow +\infty$, which is different from the solution of Problem \eqref{pblm:VI} given in Proposition \ref{prop:optimalityRenyi}. Notice however that the solution $\vartheta_* \in \domain \varphi_{\lambda}$ such that $q_{\vartheta_*}^{(\alpha)}(T) = \pi(T)$ is a solution to 
\begin{equation*}
    \minimize_{\vartheta \in \domain \varphi_{\lambda}} RD_{\alpha}(\pi^{(1/\alpha)}, q_{\vartheta}).
\end{equation*}
Thus, in the large number of samples regime, the sub-optimal solution of Problem \eqref{pblm:MLE} does not solve Problem \eqref{pblm:VI} but a similar variational inference problem with a deformed target.

Finally, we remark that Assumptions \ref{assumption:expFamily} and \ref{assumption:supportCondition} prevent us from straightforwardly applying our results to the $\lambda$-exponential family when $\lambda > 0$. Indeed, such value of $\lambda$ can lead to distributions whose support depends on the parameters (see for instance the distributions studied in \cite{martins2022} and in \cite[Example 3.17]{wong2022}). Although Proposition \ref{prop:rewritingRenyi} holds even for varying support, this behavior makes optimization much more challenging.

\subsubsection{Comparing our works with existing results in optimization}

First, remark that the proximal operators used in our algorithms can be considered as generalized Bregman proximal operators, where the scalar product of $\mathcal{H}$ is replaced by the non-linear coupling $c_{\lambda}$. Indeed, it is well-known that the Kullback-Leibler divergence between two members of the same exponential family can be written as a Bregman divergence \cite{barndorff-nielsen2014}. In our case, we can rewrite the Rényi divergence $RD_{\alpha}(q_{\vartheta'}, q_{\vartheta})$ under a similar form, using $c_{\lambda}$:
\begin{equation}
    \label{eq:RenyiRewritingAsBregman}
    RD_{\alpha}(q_{\vartheta'}, q_{\vartheta}) = \varphi_{\lambda}(\vartheta) - \varphi_{\lambda}(\vartheta') - c_{\lambda}(\vartheta, q_{\vartheta'}^{(\alpha)}(T)) + c_{\lambda}(\vartheta', q_{\vartheta'}^{(\alpha)}(T)),
\end{equation}
with $q_{\vartheta'}^{(\alpha)}(T) \in \partial^{c_{\lambda}} \varphi_{\lambda}(\vartheta')$, using Equation \eqref{eq:fenchelYoung} and Proposition \ref{prop:gradientMoment}. The particular re-writing of Equation \eqref{eq:RenyiRewritingAsBregman} was established in \cite{wang2022}.

Propositions \ref{prop:proxVI} and \ref{prop:proxMLE} show that the operators that we proposed in Definitions \ref{def:proxVI} and \ref{def:proxMLE} to build our algorithms are approximating a proximal operator when $\lambda \neq 0$. We are not aware of any optimization algorithms stated directly in a generalized convexity framework (i.e.,~a generalization of standard convexity theory using modified scalar product as in \cite{delara2020, lefranc2022, fajardo2022}, or modified subgradient as in \cite{bednarczuk2022}). Although our operators are not exactly proximal operators (except for $\lambda = 0$), they may be a first step leading to such algorithms. Note however that our construction heavily depends on the objective function having an expression like the ones described in Propositions \ref{prop:optResult1} and \ref{prop:optResult2}.

The authors of \cite{kainth2022} also faced the difficulty of computing proximal operators of the form introduced in Definition \ref{def:proximalOperatorGen}. While we proposed ad hoc operators that are shown to be sub-optimal solutions to these optimization problems in Propositions \ref{prop:proxVI} and \ref{prop:proxMLE}, they took another route. Indeed, they studied a continuous time Riemannian gradient flow, whose metric is given by the corrected Hessian of a function that is convex in the sense of the coupling $c_{\lambda}$. Note that the authors consider other types of objective functions than we did. They consider convex and differentiable objectives, while we consider specifically maximum likelihood and variational inference problems, whose objectives are not necessarily convex.

In the context of variational inference, a gradient descent algorithm within the geometry induced by the Kullback-Leibler divergence is studied in \cite{guilmeau2022} for the minimization of the Rényi divergence with $\alpha \in (0,1]$ over the standard exponential family, amounting to $\lambda = 0$. A gradient descent algorithm to minimize the $\chi^2$ divergence, which is linked to the Rényi divergence with $\alpha = 2$ over the exponential family has also been proposed in \cite{akyildiz2021} for adaptive importance sampling \cite{bugallo2017adaptive}. In this work, we have only considered the setting $\lambda + \alpha = 1$, imposing a strict relation between the approximating family and the divergence.

\section{Numerical experiments}
\label{section:numerics}

We now illustrate our findings through numerical experiments. Our examples are designed as proof-of-concepts, illustrating the advantage of considering the $\lambda$-exponential family, instead of the standard exponential one, in simple situations. To do so, we consider instances of Problems \eqref{pblm:VI}, \eqref{pblm:MLE}, and \eqref{pblm:MixtMLE} where the approximating family $\mathcal{Q}_{\lambda}$ is the Student family $\mathcal{T}_{\nu}^d$ (see Section \ref{subsection:studentDistributions}). We remind that this amounts to setting $\lambda = -\frac{2}{\nu+d}$ (see Proposition \ref{prop:studentFamily} $(i)$). In our comparisons, we will also consider the limiting case of Gaussian distributions, obtained by setting $\nu = +\infty$, in which case $\lambda = 0$. For pedagogical purpose, in all examples, the target distribution (in case of variational inference problem) and the distribution generating the samples (in case of maximum likelihood problem) is also a Student density, denoted $\pi \in \mathcal{T}_{\nu_{\pi}}^d$, and parametrized by $\nu_{\pi} > 0$ degrees of freedom, location parameter $\mu_{\pi} \in \mathbb{R}^d$ and shape matrix $\Sigma_{\pi} \in \mathcal{S}_{++}^d$. This controlled setting allows to access $\pi$ and its escort $\pi^{(\alpha)}$, sample from them, and compute Rényi divergences, making it possible to assess quantitatively the results.

\subsection{A variational inference problem with Student approximating densities}
\label{subsection:numericsVI}

We start our experiments by an instance of Problem \eqref{pblm:VI} described as
\begin{equation}
    \label{pblm:VI-stud}
    \tag{$P_{\text{VI-Student}}$}
    \minimize_{q_{\mu, \Sigma} \in \mathcal{T}_{\nu}^d} RD_{\alpha}(\pi, q_{\mu, \Sigma}),
\end{equation}
where $\alpha = 1 + \frac{2}{\nu + d}$, in light of $\mathcal{T}_{\nu}^d$ being a $\lambda$-exponential family with $\lambda = - \frac{2}{\nu+d}$ (see Proposition \ref{prop:studentFamily} $(ii)$) and the optimality result of Proposition \ref{prop:optimalityRenyi}. The optimality conditions of Problem \eqref{pblm:VI-stud} are given in Equation \eqref{eq:corollaryStudentVI}. These conditions amount to setting $\mu$ and $\Sigma$ such that the first and second order moments of $q_{\mu,\Sigma}^{(\alpha)}$ match those of the escort of the target $\pi$, that is $\pi^{(\alpha)}$. 

By Proposition \ref{prop:escortStudent}, if $\alpha = 1 + \frac{2}{\nu + d}$ for some $\nu > 0$, then $\pi^{(\alpha)}$ has first and second order moments if and only if
\begin{equation}
    \label{eq:well-posednessCondition}
    \nu_{\pi} + 2 \frac{\nu_{\pi}+d}{\nu+d} > 2.
\end{equation}
We consider $\pi \in \mathcal{T}_{\nu_{\pi}}^d$ with $\nu_{\pi} \in \{1, 3, 10\}$ and $d \in \{5,20\}$. The location vector $\mu_{\pi}$ is sampled uniformly in $[-1,1]^d$ and the shape matrix $\Sigma_{\pi} \in \mathcal{S}_{++}^d$ is constructed following \cite{moré1989} with a condition number $\kappa_{\pi} \in \{ 10, 1000\}$ (i.e., a well conditioned setting, and a poorly conditioned setting). Regarding our approximating families, we experiment various degrees of freedom $\nu \in \{1, 3, 10, +\infty\}$ such that Equation \eqref{eq:well-posednessCondition} is satisfied. The case $\nu = \infty$ corresponds to a Gaussian approximating family, which is an instance of the exponential family. In contrast, for finite $\nu$, we are working within an instance of the $\lambda$-exponential family, $\lambda = - \frac{2}{\nu + d}$. Our experimental scenarios cover the matched case where $\nu = \nu_{\pi}$, as well as various mismatched cases where $\nu \neq \nu_{\pi}$.

Using the results from Section \ref{subsection:vi}, we have actually two ways to solve Problem \eqref{pblm:VI-stud}. We can either follow Corollary \ref{corollary:StudentVI} and try to directly approximate the optimality conditions of Equation \eqref{eq:corollaryStudentVI}. This requires the computation of the first and second order moments of the escort of the target. Alternatively, we can implement Algorithm \ref{alg:proxVI}. This requires the computation of the same moments, but it allows to approximate them differently at each iteration and possibly average the errors and improve the estimators. We consider the two approaches in what follows. We also consider two distinct ways to approximate the first and second order moments of the escort of the target. In Section \ref{subsec:escortMomentMatching}, we consider that exact samples from $\pi^{(\alpha)}$ are used to approximate Equation \eqref{eq:corollaryStudentVI}. This idealized setting allows to illustrate the validity of our optimality conditions with an exact sampling procedure. In Section \ref{subsec:escortMomentMatchingMALA}, we consider a more realistic situation where only the unnormalized density of the target is available. In this situation, one needs an integration procedure to approximate the moments of the escort of the target in this setting. We choose here to use a Metropolis-adjusted Langevin algorithm (MALA) \cite{roberts2002langevin}. In this setting, we consider the approximation of Equation \eqref{eq:corollaryStudentVI} as well as the implementation of Algorithm \ref{alg:proxVI} with an adaptively scaled MALA \cite{martin2012, marnissi2020}.

\subsubsection{Using samples from the target}
\label{subsec:escortMomentMatching}

Problem \eqref{pblm:VI-stud} can be solved by approximating the optimality conditions of Corollary \ref{corollary:StudentVI} using a standard Monte Carlo algorithm with samples from $\pi^{(\alpha)}$. This is feasible as, in this experiment, $\pi^{(\alpha)}$ is a Student distribution with parameters described by Proposition \ref{prop:escortStudent}. This is an idealized setting since in practical scenarios of variational inference, one does not have the possibility to sample from the escort target. This leads to the following sampling algorithm.

\begin{algorithm}[H]
     Choose an approximating family $\mathcal{T}_{\nu}^d$ and set $\alpha = 1 + \frac{2}{\nu+d}$. Choose $N \in \mathbb{N}$.\\
     \For{$k = 0,\dots$}{
     Sample $\{x_{k+1}^{(1)},\dots,x_{k+1}^{(N)}\}$ from $\pi^{(\alpha)}$.\\

     Evaluate $(\pi^{(\alpha)}(x))_{k+1}, (\pi^{(\alpha)}(xx^{\top}))_{k+1}$ by
     \begin{equation}
         \begin{cases}
             (\pi^{(\alpha)}(x))_{k+1} &= \frac{1}{kN} \sum_{l=0}^k \sum_{i=1}^{N} x_{l+1}^{(i)},\\
             (\pi^{(\alpha)}(xx^{\top}))_{k+1} &= \frac{1}{kN} \sum_{l=0}^k \sum_{i=1}^{N} x_{l+1}^{(i)} (x_{l+1}^{(i)})^{\top}.
         \end{cases}
     \end{equation}\\
     Compute $\mu_{k+1}, \Sigma_{k+1}$ following
     \begin{equation}
         \begin{cases}
             \mu_{k+1} = (\pi^{(\alpha)}(x))_{k+1},\\
             \Sigma_{k+1} = (\pi^{(\alpha)}(xx^{\top}))_{k+1} - \mu_{k+1} \mu_{k+1}^{\top}.
         \end{cases}
     \end{equation}
     }
     \caption{Solving Problem \eqref{pblm:VI-stud} by approximating \eqref{eq:corollaryStudentVI} with samples from $\pi^{(\alpha)}$}
     \label{alg:samplesFromTarget}
\end{algorithm}

We now present the results, using $N = 10 d$ samples per iteration. Figure \ref{fig:VIwithMC_d20cond10} shows the performance of Algorithm \ref{alg:samplesFromTarget}, in terms of Rényi divergence value along iterations, when setting dimension $d=20$, and condition number $\kappa_{\pi} = 10$. We observe that the best values of the Rényi divergences are obtained for the matched case $\nu = \nu_{\pi}$, which is expected. Note also that the Gaussian approximations (i.e., $\nu  = +\infty$) perform very poorly. More generally, the closer $\nu$ is to $\nu_{\pi}$, the better the performance. Remark that when $\nu_{\pi} = \nu = 1$, the values reached by the Rényi divergences are more spread around the median. This could be because the degree of freedom parameter of $\pi^{(\alpha)}$ in this case is the lowest, and hence, $\pi^{(\alpha)}$ has heavier tails. In Figure \ref{fig:VIwithMC_d20cond10_CauchyTarget}, in the case when $\nu_{\pi} = 1$, some approximating families need to be excluded to comply with Equation \eqref{eq:well-posednessCondition}. In particular, standard moment-matching, corresponding to $\nu = +\infty$ is not defined in this case. In constrast, as soon as $\nu_{\pi} > 2$, Equation \eqref{eq:well-posednessCondition} is satisfied for any $\nu > 0$, so any approximating family can be chosen, as it is done in the plots for Figures \ref{fig:VIwithMC_d20cond10_nu3Target} and \ref{fig:VIwithMC_d20cond10_nu10Target}.

\begin{figure}[t]
    \centering
    \begin{subfigure}[b]{0.32\textwidth}
        \includegraphics[width = \textwidth]{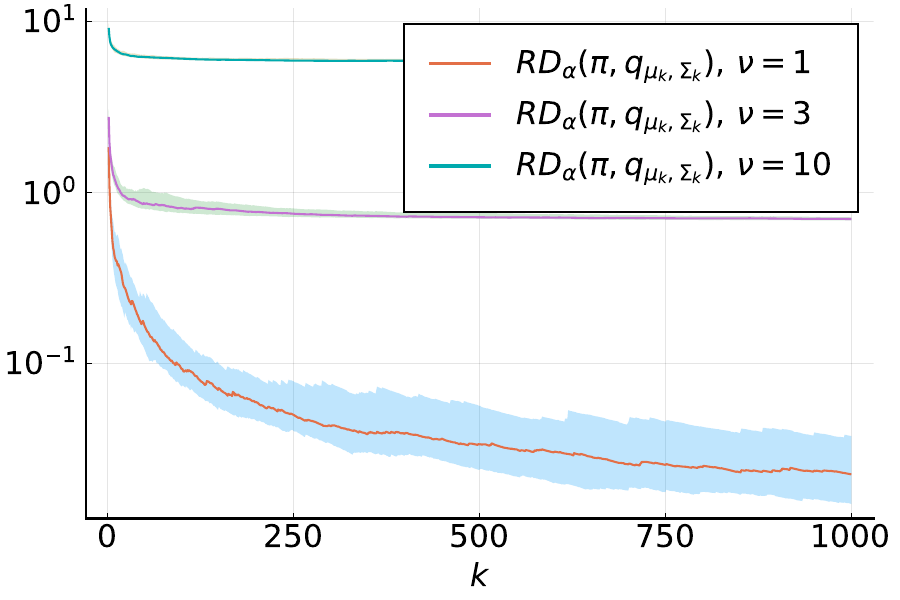}
        \caption{$\nu_{\pi}=1$, $\nu \in \{ 1, 3, 10\}$}
        \label{fig:VIwithMC_d20cond10_CauchyTarget}
    \end{subfigure}  
    \hfill
    \begin{subfigure}[b]{0.32\textwidth}
        \includegraphics[width = \textwidth]{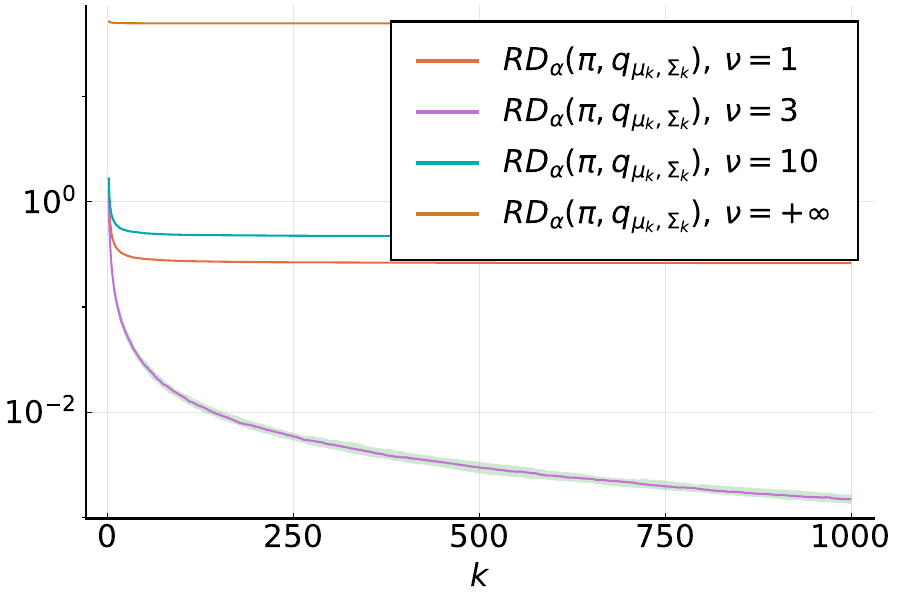}
        \caption{$\nu_{\pi}=3$, $\nu \in \{1,3,10,+\infty\}$}
        \label{fig:VIwithMC_d20cond10_nu3Target}
    \end{subfigure}  
    \hfill
    \begin{subfigure}[b]{0.32\textwidth}
        \includegraphics[width = \textwidth]{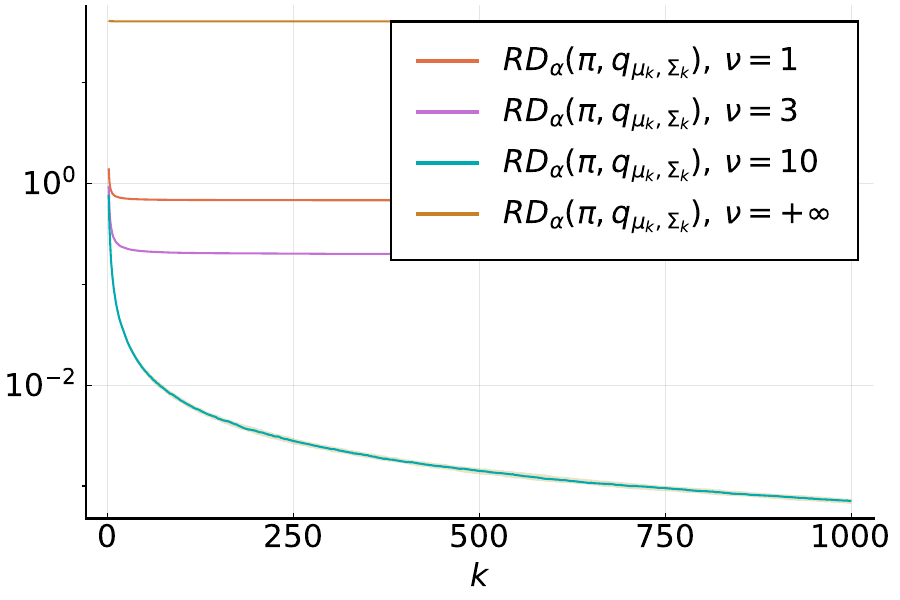}
        \caption{$\nu_{\pi}=10$, $\nu \in \{1,3,10,+\infty\}$}
        \label{fig:VIwithMC_d20cond10_nu10Target}
    \end{subfigure} 
    \caption{Rényi divergence between $q_{\mu_k, \Sigma_k} \in \mathcal{T}_{\nu}^d$ and $\pi$ in dimension $d=20$ with $\kappa_{\pi} = 10$ at every iteration $k$. The iterates $q_{\mu_k, \Sigma_k} \in \mathcal{T}_{\nu}^d$ are obtained using Algorithm \ref{alg:samplesFromTarget}. The line is the median Rényi divergence per iteration and the shaded area is the interval between the first and third quartiles. The quartiles are obtained by running the algorithm for $100$ runs of $1000$ iterations.}
    \label{fig:VIwithMC_d20cond10}
\end{figure}

In Figure \ref{fig:VIwithMC_d5cond1000}, we show performance in dimension $d=5$ and high condition number $\kappa_{\pi} = 1000$. Since the samples are generated directly from $\pi^{(\alpha)}$, the poor conditioning issue is mitigated. Since a low dimension has been used, more values of $\nu$ need to be excluded in order to comply with the condition in Equation \eqref{eq:well-posednessCondition} in the case $\nu_{\pi} = 1$.

\begin{figure}[H]
    \centering
    \begin{subfigure}[b]{0.32\textwidth}
        \includegraphics[width = \textwidth]{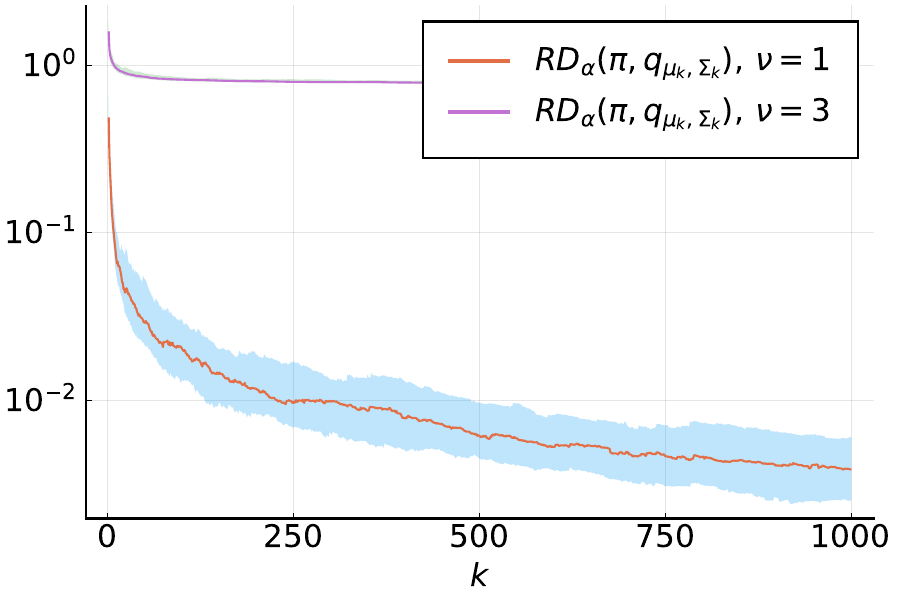}
        \caption{$\nu_{\pi}=1$, $\nu \in \{1,3\}$}
    \end{subfigure}  
    \hfill
    \begin{subfigure}[b]{0.32\textwidth}
        \includegraphics[width = \textwidth]{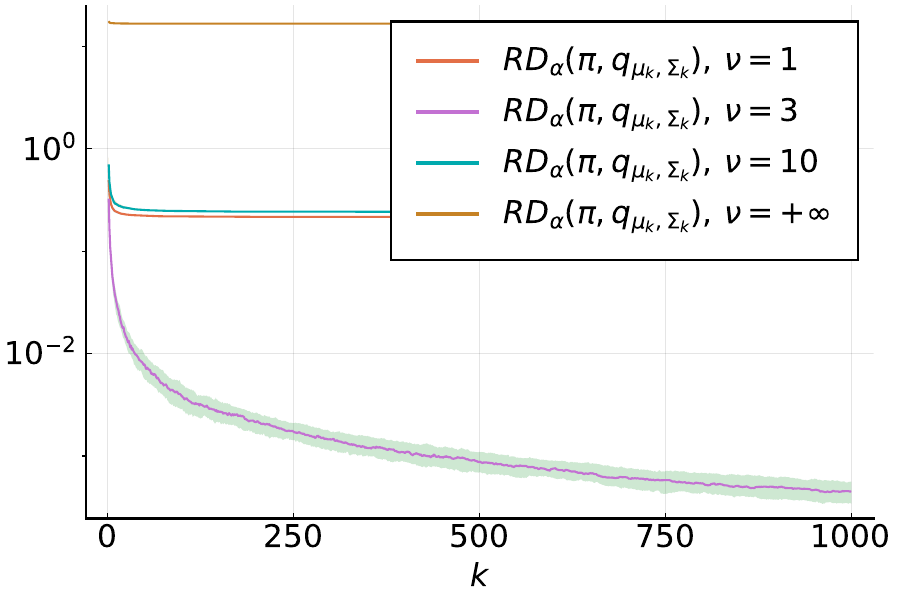}
        \caption{$\nu_{\pi}=3$, $\nu \in \{ 1, 3, 10, +\infty$\}}
    \end{subfigure}  
    \hfill
    \begin{subfigure}[b]{0.32\textwidth}
        \includegraphics[width = \textwidth]{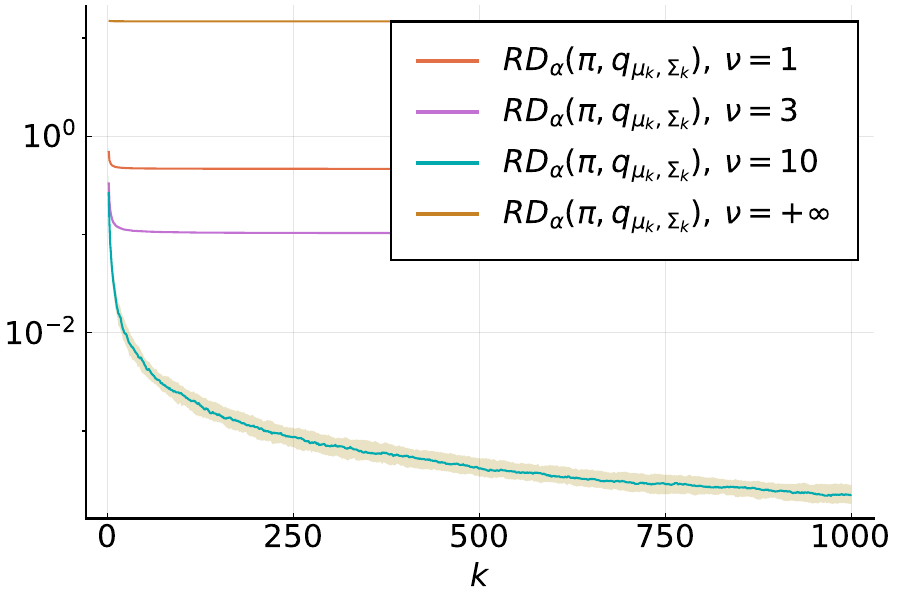}
        \caption{$\nu_{\pi}=10$, $\nu \in \{ 1, 3, 10, +\infty$\}}
    \end{subfigure} 
    \caption{Rényi divergence between $q_{\mu_k, \Sigma_k} \in \mathcal{T}_{\nu}^d$ and $\pi$ in dimension $d=5$ with $\kappa_{\pi} = 1000$ at every iteration $k$. The iterates $q_{\mu_k, \Sigma_k} \in \mathcal{T}_{\nu}^d$ are obtained using Algorithm \ref{alg:samplesFromTarget}. The line is the median Rényi divergence per iteration and the shaded area is the interval between the first and third quartiles. The quartiles are obtained by running the algorithm for $100$ runs of $1000$ iterations.}
    \label{fig:VIwithMC_d5cond1000}
\end{figure}

\subsubsection{Using Metropolis-adjusted Langevin algorithms}
\label{subsec:escortMomentMatchingMALA}

We now consider a more practical resolution of Problem \eqref{pblm:VI-stud}. We only assume that one has access to an oracle giving the unnormalized log-density $\log \Tilde{\pi}$ such that for any $x \in \mathbb{R}^d$, $\log \pi(x) = \log \Tilde{\pi}(x) - \log Z_{\pi}$ for some $Z_{\pi} > 0$. We also assume that one can evaluate the gradients $\nabla \log \tilde{\pi} (x)$ for any $x \in \mathbb{R}^d$. Under these assumptions, we propose to perform the computation of $\pi^{(\alpha)}(x), \pi^{(\alpha)}(x x^{\top})$ using a MALA approach, a particular Monte Carlo Markov Chain algorithm introduced in \cite{roberts2002langevin}. Let $x \in \mathbb{R}^d$ a starting point of the chain and suppose that we want to have samples approximately distributed following $\pi^{(\alpha)}$ for $\alpha > 0$. Then, MALA uses a proposal distribution of the form
\begin{equation}
    \label{eq:proposalPlainMALA}
    y \sim \mathcal{N}\left(x + \frac{1}{2} \sigma_d^2 \alpha A \nabla \log \tilde{\pi}(x), \sigma^2_d A\right).
\end{equation}
A typical choice is $\sigma_d^2 = \frac{0.574^2}{d^{1/3}}$, following the optimal settings described in \cite{roberts2001}. Moreover, hereabove, $A \in \mathcal{S}_{++}^d$ is the so-called scale matrix. The proposed sampled $y$ is then accepted or not following a Metropolis-Hastings step. The scale matrix $A$ in Equation \eqref{eq:proposalPlainMALA} will be chosen either as the identity matrix leading to the standard MALA algorithm, or as to reflect the curvature of $\log \pi$ around the current point $x$, as it is done in \cite{martin2012, marnissi2020} for instance. 

\paragraph{Standard MALA} We first consider the direct approximation of the optimality conditions \eqref{eq:corollaryStudentVI} by approximating the moments of $\pi^{(\alpha)}$ using samples generated with Equation \eqref{eq:proposalPlainMALA} with $A = I_d$. This leads to Algorithm \ref{alg:plainMALA} described below.

\begin{algorithm}[H]
     Choose an approximating family $\mathcal{T}_{\nu}^d$ and set $\alpha = 1 + \frac{2}{\nu+d}$. Choose $N \in \mathbb{N}$. Initialize $x_0$.\\
     \For{$k = 0,\dots$}{
     Sample $\{x_{k+1}^{(1)},\dots,x_{k+1}^{(N)}\}$ samples from $x_k$ using the MALA algorithm with proposal described in Equation \eqref{eq:proposalPlainMALA} with $A = I_d$, set $x_{k+1} = x_{k+1}^{(N)}$.\\

     Evaluate $(\pi^{(\alpha)}(x))_{k+1}, (\pi^{(\alpha)}(xx^{\top}))_{k+1}$ by
     \begin{equation}
         \begin{cases}
             (\pi^{(\alpha)}(x))_{k+1} &= \frac{1}{kN} \sum_{l=0}^k \sum_{i=1}^{N} x_{l+1}^{(i)},\\
             (\pi^{(\alpha)}(xx^{\top}))_{k+1} &= \frac{1}{kN} \sum_{l=0}^k \sum_{i=1}^{N} x_{l+1}^{(i)} (x_{l+1}^{(i)})^{\top}.
         \end{cases}
     \end{equation}\\
     Compute $\mu_{k+1}, \Sigma_{k+1}$ following
     \begin{equation}
         \begin{cases}
             \mu_{k+1} = (\pi^{(\alpha)}(x))_{k+1},\\
             \Sigma_{k+1} = (\pi^{(\alpha)}(xx^{\top}))_{k+1} - \mu_{k+1} \mu_{k+1}^{\top}.
         \end{cases}
     \end{equation}
     }
     \caption{Solving Problem \eqref{pblm:VI-stud} by approximating \eqref{eq:corollaryStudentVI} with MALA}
     \label{alg:plainMALA}
\end{algorithm}

We now turn to the experiments on the parameters described previously. We set $N = 10d$ for each experiment and initialize $x_0$ by sampling it uniformly in $[-5,5]^d$.

We display in Figure \ref{fig:VIwithMALA_d20cond10} the results obtained, for a target with low condition number $\kappa_{\pi} = 10$, in dimension $d=20$. We can observe that, as in Section \ref{subsec:escortMomentMatching}, the matched case $\nu = \nu_{\pi}$ yields the best results. Interestingly, the proposed MALA strategy works well even when the target is heavy-tailed. This could be surprising in light of negative results such as the ones in \cite{jarner2007}, but remark that we apply MALA on $\pi^{(\alpha)}$ and not $\pi$. Due to Equation \eqref{eq:well-posednessCondition}, $\pi^{(\alpha)}$ has well-defined first and second order moments, which explains the good performance of the MALA algorithm in this case. This illustrates the interest of the optimality conditions that we prove in Proposition \ref{prop:optimalityRenyi}, as they allow to handle heavy-tailed targets just as if they were light-tailed.

\begin{figure}[H]
    \centering
    \begin{subfigure}[b]{0.32\textwidth}
        \includegraphics[width = \textwidth]{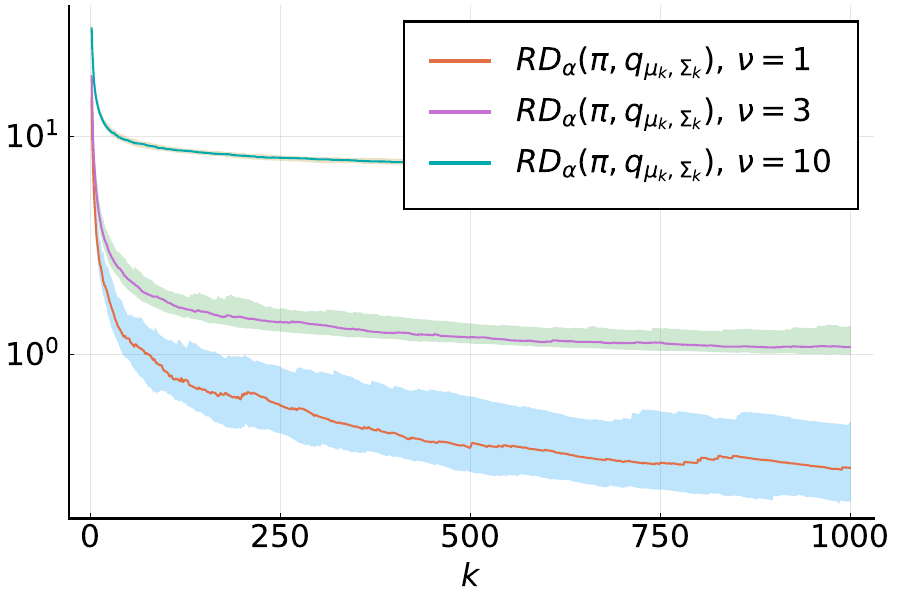}
        \caption{$\nu_{\pi}=1$, $\nu \in \{1,3,10\}$}
    \end{subfigure}  
    \hfill
    \begin{subfigure}[b]{0.32\textwidth}
        \includegraphics[width = \textwidth]{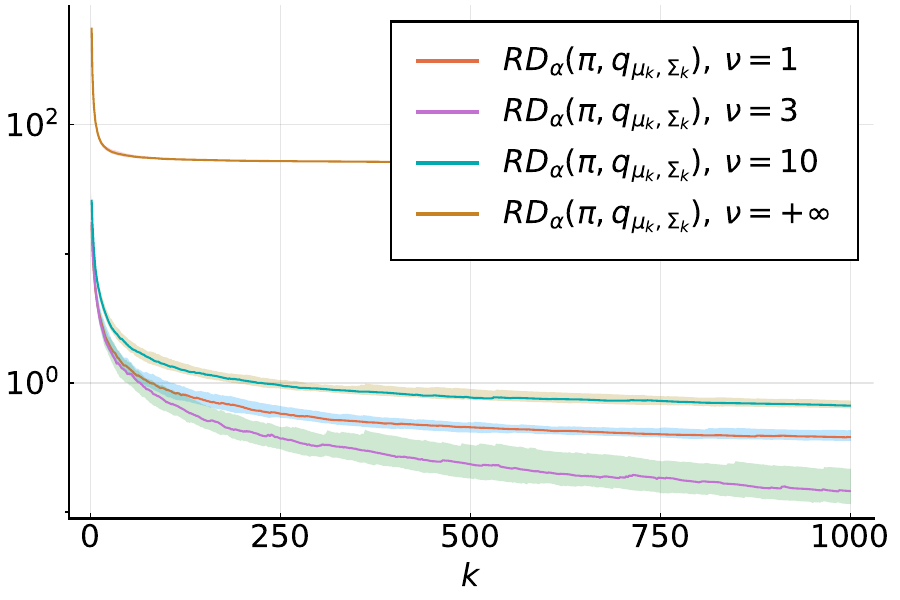}
        \caption{$\nu_{\pi}=3$, $\nu \in \{1,3,10,+\infty\}$}
    \end{subfigure}  
    \hfill
    \begin{subfigure}[b]{0.32\textwidth}
        \includegraphics[width = \textwidth]{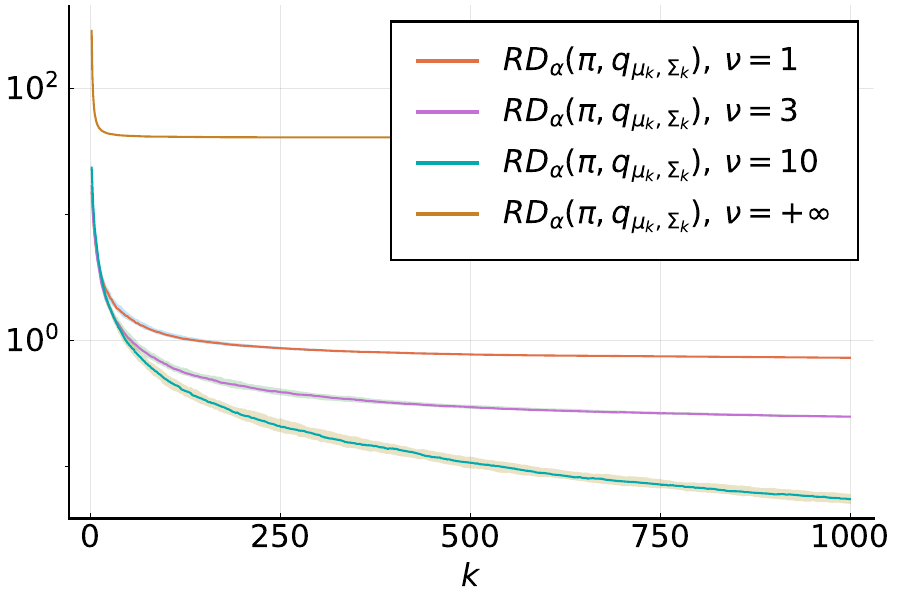}
        \caption{$\nu_{\pi}=10$, $\nu \in \{1,3,10,+\infty\}$}
    \end{subfigure} 
    \caption{Rényi divergence between $q_{\mu_k, \Sigma_k} \in \mathcal{T}_{\nu}^d$ and $\pi$ in dimension $d=20$ with $\kappa_{\pi} = 10$ at every iteration $k$. The iterates $q_{\mu_k, \Sigma_k} \in \mathcal{T}_{\nu}^d$ are obtained using Algorithm \ref{alg:plainMALA}. The line is the median Rényi divergence per iteration and the shaded area is the interval between the first and third quartiles. The quartiles are obtained by running the algorithm for $100$ runs of $1000$ iterations.}
    \label{fig:VIwithMALA_d20cond10}
\end{figure}

We now turn to a target with low dimension $d=5$, whose scale matrix has condition number $\kappa_{\pi} = 1000$. This is challenging given that the proposal distribution in our MALA algorithm is isotropic. Figure \ref{fig:VIwithMALA_d5cond1000} shows the results. Compared to the case of a low condition number in higher dimension, depicted in Figure \ref{fig:VIwithMALA_d20cond10}, we observe that the values of the Rényi divergence are higher, sometimes by an order of magnitude. The dispersal of the values around the median is also more pronounced. This can be explained by the fact that in the standard MALA algorithm, the proposals are isotropic Gaussian distributions, and hence not well adapted to the target at hand. Note also that when $\nu_{\pi}$ grows, the negative impact of having $\nu \neq \nu_{\pi}$ seems to diminish, especially compared to the situation of Figure \ref{fig:VIwithMALA_d20cond10}

\begin{figure}[H]
    \centering
    \begin{subfigure}[b]{0.32\textwidth}
        \includegraphics[width = \textwidth]{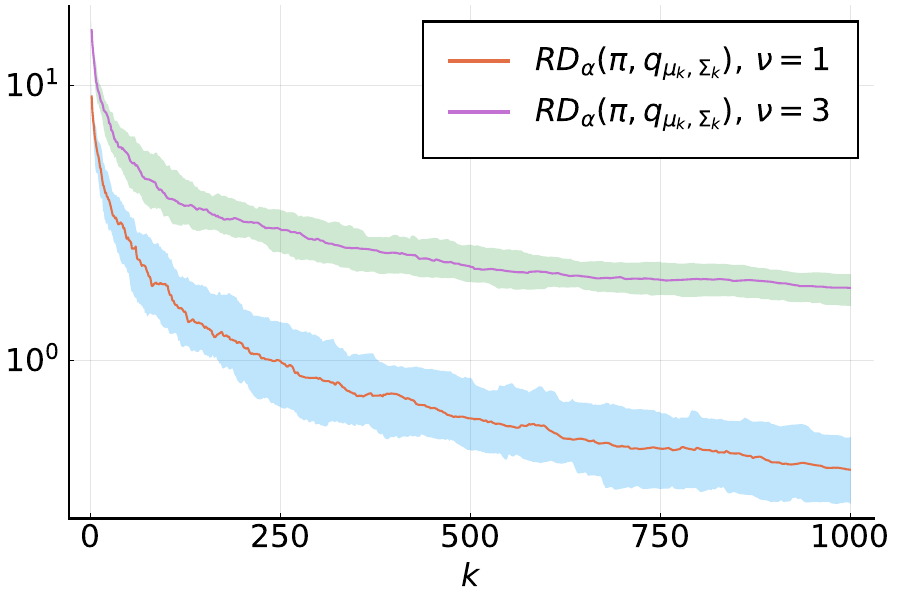}
        \caption{$\nu_{\pi}=1$, $\nu \in \{1,3\}$}
    \end{subfigure}  
    \hfill
    \begin{subfigure}[b]{0.32\textwidth}
        \includegraphics[width = \textwidth]{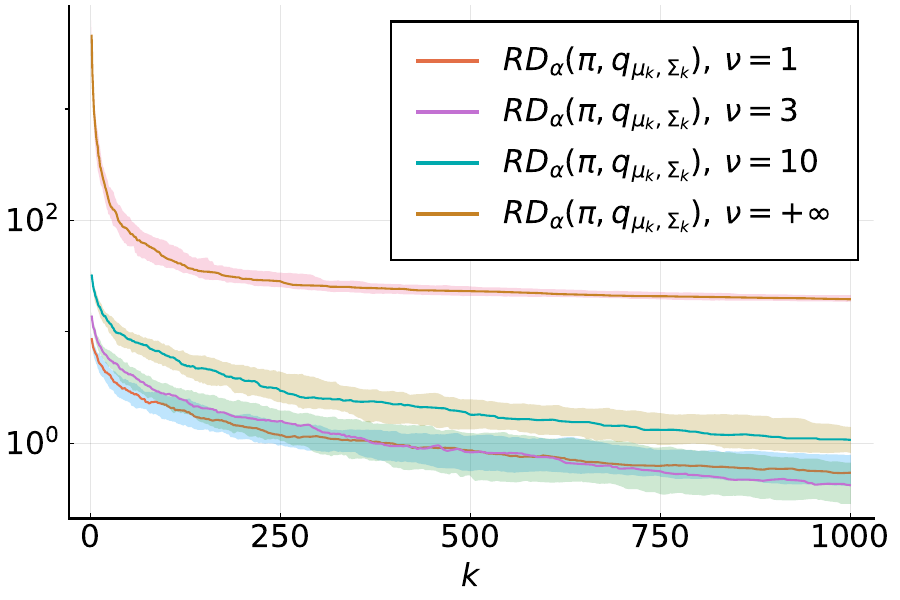}
        \caption{$\nu_{\pi}=3$, $\nu \in \{1,3,10,+\infty\}$}
    \end{subfigure}  
    \hfill
    \begin{subfigure}[b]{0.32\textwidth}
        \includegraphics[width = \textwidth]{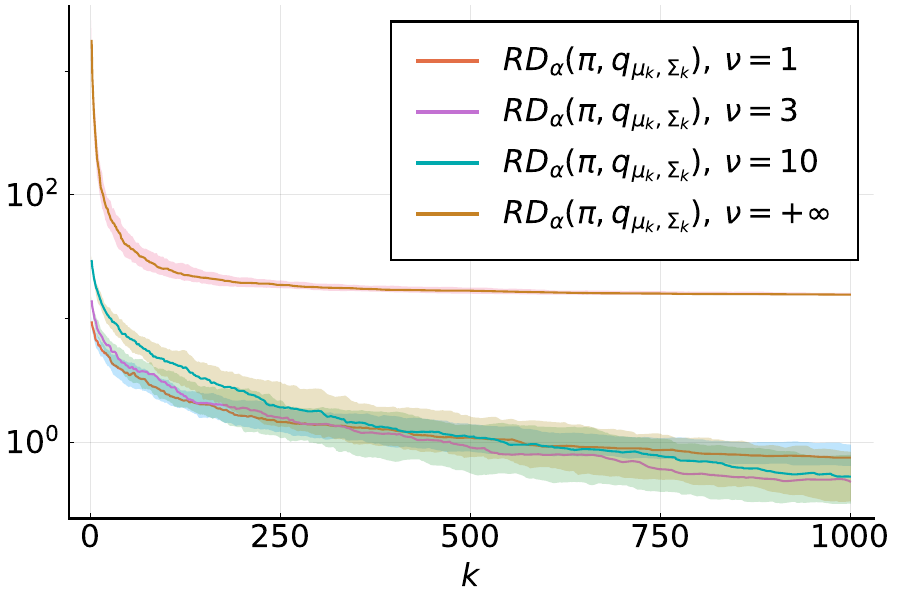}
        \caption{$\nu_{\pi}=10$, $\nu \in \{1,3,10,+\infty\}$}
    \end{subfigure} 
    \caption{Rényi divergence between $q_{\mu_k, \Sigma_k} \in \mathcal{T}_{\nu}^d$ and $\pi$ in dimension $d=5$ with $\kappa_{\pi} = 1000$ at every iteration $k$. The iterates $q_{\mu_k, \Sigma_k} \in \mathcal{T}_{\nu}^d$ are obtained using Algorithm \ref{alg:plainMALA}. The line is the median Rényi divergence per iteration and the shaded area is the interval between the first and third quartiles. The quartiles are obtained by running the algorithm for $100$ runs of $1000$ iterations.}
    \label{fig:VIwithMALA_d5cond1000}
\end{figure}

\paragraph{Scaled MALA} As shown in Figure \ref{fig:VIwithMALA_d5cond1000}, the use of an isotropic proposal in MALA might not be well suited for a poorly conditioned target. We now consider the implementation of Algorithm \ref{alg:proxVI} and the adaptation of the scale matrix $A$ in the MALA sampling step \eqref{eq:proposalPlainMALA}. To do so, we exploit the approximation $q_{\mu_k, \Sigma_k}$ of $\pi^{(\alpha)}$ by setting $A = \Sigma_k$ at each iteration $k \in \mathbb{N}$. The approximating distribution at iteration $k \in \mathbb{N}$, $q_{\mu_k, \Sigma_k}$ is itself updated following Algorithm \ref{alg:proxVI} with $\tau_k = \frac{1}{k}$ and $\pi^{(\alpha)}(T)$ being approximated by $N$ samples from the Markov chain. Therefore, the scaling matrix is updated every $N$ number of MALA steps and not at every iteration as in \cite{martin2012, marnissi2020}. The resulting procedure is detailed in Algorithm \ref{alg:adaptiveMALA}.

\begin{algorithm}[H]
     Choose an approximating family $\mathcal{T}_{\nu}^d$ and set $\alpha = 1 + \frac{2}{\nu+d}$. Choose $N \in \mathbb{N}$. Initialize $\mu_0$, $\Sigma_0$, and $x_0$.\\
     \For{$k = 0,\dots$}{
     Sample $\{x_{k+1}^{(1)},\dots,x_{k+1}^{(N)}\}$ samples from $x_k$ using the MALA algorithm with proposal as in Equation \eqref{eq:proposalPlainMALA} with $A = \Sigma_k$, set $x_{k+1} = x_{k+1}^{(N)}$.\\

     Evaluate $(\pi^{(\alpha)}(x))_{k+1}, (\pi^{(\alpha)}(xx^{\top}))_{k+1}$ by
     \begin{equation}
         \begin{cases}
             (\pi^{(\alpha)}(x))_{k+1} &= \frac{1}{N} \sum_{i=1}^{N} x_{k+1}^{(i)},\\
             (\pi^{(\alpha)}(xx^{\top}))_{k+1} &= \frac{1}{N} \sum_{i=1}^{N} x_{k+1}^{(i)} (x_{k+1}^{(i)})^{\top}.
         \end{cases}
     \end{equation}\\
     Update $\mu_{k+1}, \Sigma_{k+1}$ following
     \begin{equation}
         \begin{cases}
             \mu_{k+1} = \frac{1}{k+1} (\pi^{(\alpha)}(x))_{k+1} + \frac{k}{k+1} \mu_k,\\
             \Sigma_{k+1} = \frac{1}{k+1} (\pi^{(\alpha)}(xx^{\top}))_{k+1} + \frac{k}{k+1} (\Sigma_k + \mu_k \mu_k^{\top}) - \mu_{k+1} \mu_{k+1}^{\top}.
         \end{cases}
     \end{equation}
     }
     \caption{Solving Problem \eqref{pblm:VI-stud} with the updates \eqref{eq:proxLikeIterateVI} and scaled MALA.}
     \label{alg:adaptiveMALA}
\end{algorithm}

We now present our results, with $N = 10 d$. For each run, we initialize the algorithm with $\Sigma_0 = I_d$, and $\mu_0 = x_0$ sampled uniformly in $[-5,5]^d$. Figure \ref{fig:VIwithScaledMALA_d20cond10} shows the performance of Algorithm \ref{alg:adaptiveMALA} in dimension $d=20$ on a well-conditioned target. As in the previous cases, we observe that the best performance are reached when the approximating family contains the target when $\nu_{\pi} = 1$, while performance get more similar for other choices of $\nu_{\pi}$ as soon as $\nu$ is close to $\nu_{\pi}$.



\begin{figure}[H]
    \centering
    \begin{subfigure}[b]{0.32\textwidth}
        \includegraphics[width = \textwidth]{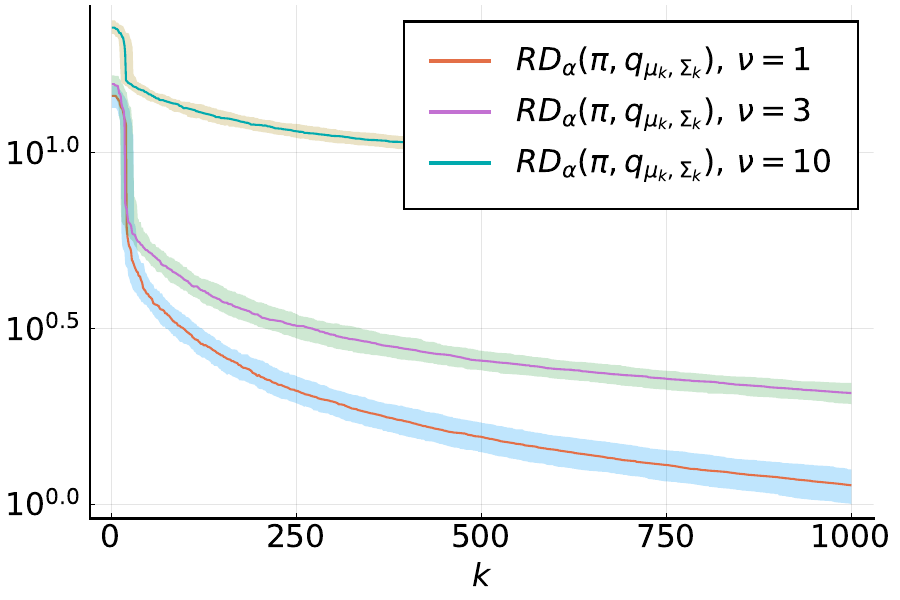}
        \caption{$\nu_{\pi}=1$, $\nu \in \{1,3,10\}$}
    \end{subfigure}  
    \hfill
    \begin{subfigure}[b]{0.32\textwidth}
        \includegraphics[width = \textwidth]{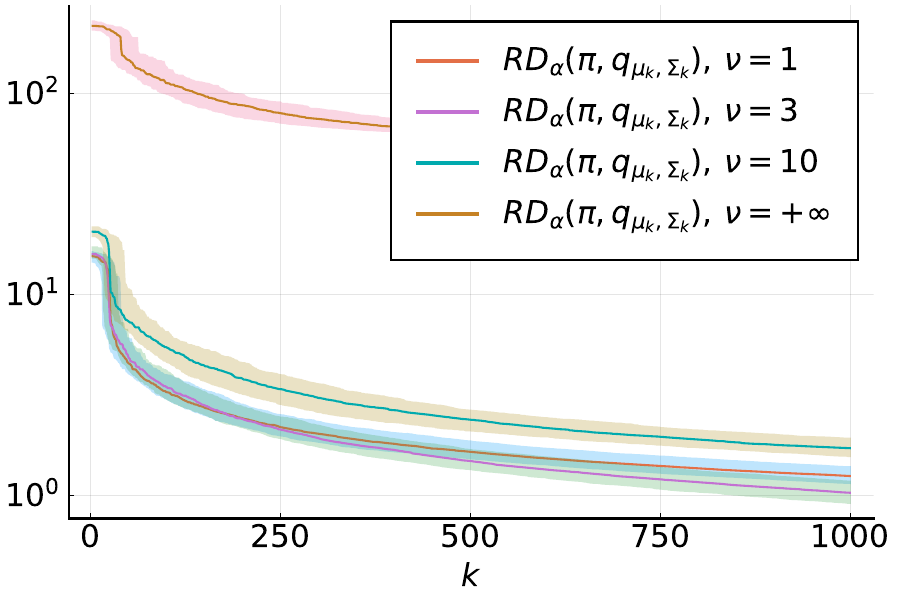}
        \caption{$\nu_{\pi}=3$, $\nu \in \{1,3,10,+\infty\}$}
    \end{subfigure}  
    \hfill
    \begin{subfigure}[b]{0.32\textwidth}
        \includegraphics[width = \textwidth]{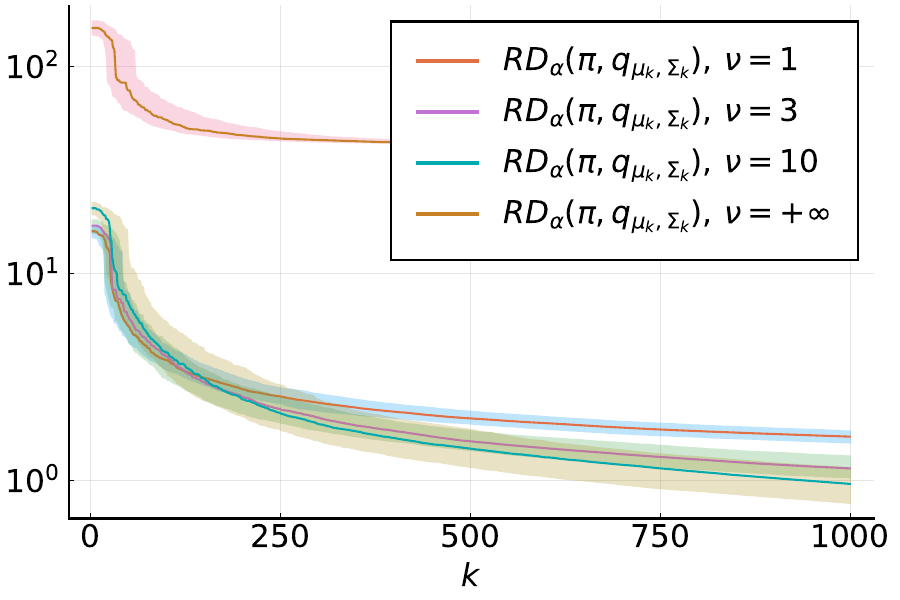}
        \caption{$\nu_{\pi}=10$, $\nu \in \{1,3,10,+\infty\}$}
    \end{subfigure} 
    \caption{Rényi divergence between $q_{\mu_k, \Sigma_k} \in \mathcal{T}_{\nu}^d$ and $\pi$ in dimension $d=20$ with $\kappa_{\pi} = 10$ at every iteration $k$. The iterates $q_{\mu_k, \Sigma_k} \in \mathcal{T}_{\nu}^d$ are obtained using Algorithm \ref{alg:adaptiveMALA}. The line is the median Rényi divergence per iteration and the shaded area is the interval between the first and third quartiles. The quartiles are obtained by running the algorithm for $100$ runs of $1000$ iterations.}
    \label{fig:VIwithScaledMALA_d20cond10}
\end{figure}

We now turn to a target $\pi$ that has a higher condition number $\kappa_{\pi} = 1000$, displaying the results on Figure \ref{fig:VIwithScaledMALA_d5cond1000}. We can observe that Algorithm \ref{alg:adaptiveMALA} reaches better performance than Algorithm \ref{alg:plainMALA} on this poorly conditioned target. Compared to the case of Figure \ref{fig:VIwithScaledMALA_d20cond10}, the values reached when the approximating family contains the target are now much better than the ones obtained with the other approximating families.

\begin{figure}[H]
    \centering
    \begin{subfigure}[b]{0.32\textwidth}
        \includegraphics[width = \textwidth]{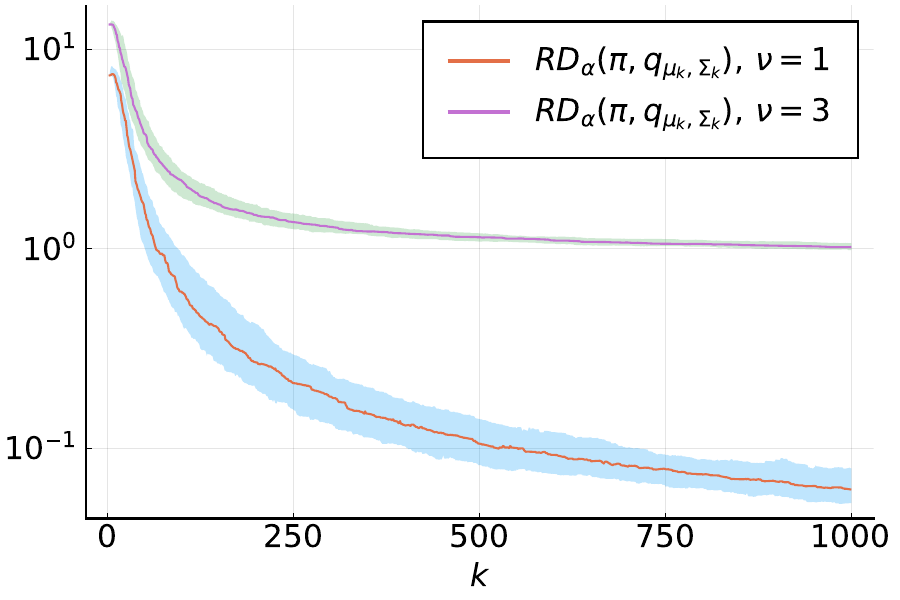}
        \caption{$\nu_{\pi}=1$, $\nu \in \{1,3\}$}
    \end{subfigure}  
    \hfill
    \begin{subfigure}[b]{0.32\textwidth}
        \includegraphics[width = \textwidth]{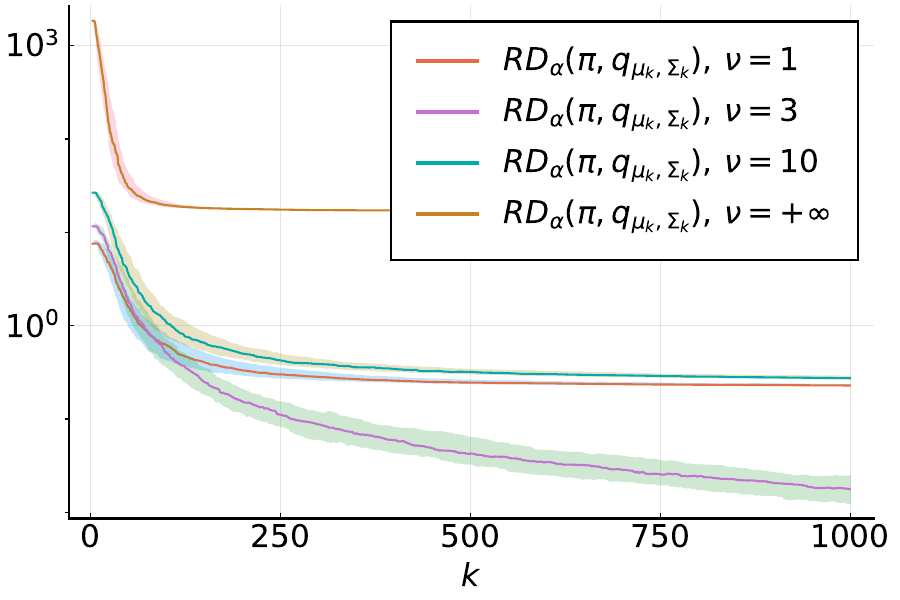}
        \caption{$\nu_{\pi}=3$, $\nu \in \{1,3,10,+\infty\}$}
    \end{subfigure}  
    \hfill
    \begin{subfigure}[b]{0.32\textwidth}
        \includegraphics[width = \textwidth]{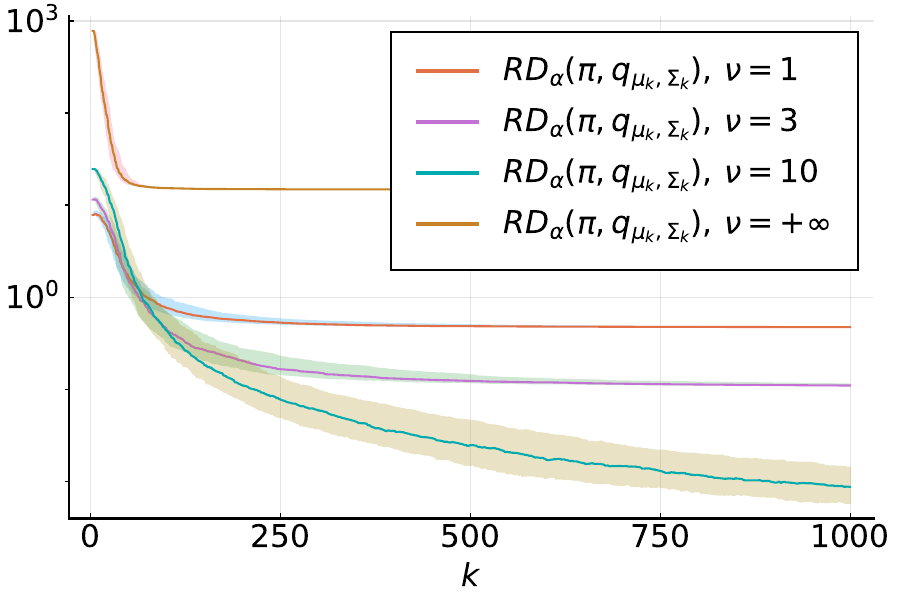}
        \caption{$\nu_{\pi}=10$, $\nu \in \{1,3,10,+\infty\}$}
    \end{subfigure} 
    \caption{Rényi divergence between $q_{\mu_k, \Sigma_k} \in \mathcal{T}_{\nu}^d$ and $\pi$ in dimension $d=5$ with $\kappa_{\pi} = 1000$ at every iteration $k$. The iterates $q_{\mu_k, \Sigma_k} \in \mathcal{T}_{\nu}^d$ are obtained using Algorithm \ref{alg:adaptiveMALA}. The line is the median Rényi divergence per iteration and the shaded area is the interval between the first and third quartiles. The quartiles are obtained by running the algorithm for $100$ runs of $1000$ iterations.}
    \label{fig:VIwithScaledMALA_d5cond1000}
\end{figure}

\paragraph{Synthesis of the results} 

Table \ref{table} summarizes the results, for the three algorithms, in terms of final value of the R\'enyi divergence, averaged over $100$ runs, after $10^3$ iterations. The results span the two scenarios, the first with a high-dimensional target with good conditioning, and the second with a poorly-conditioned target in lower dimension. The relative performance of Algorithms \ref{alg:plainMALA} and \ref{alg:adaptiveMALA} depends on the scenario. On targets that are well-conditioned but high-dimensional, Algorithm \ref{alg:plainMALA} seems to behave better than Algorithm \ref{alg:adaptiveMALA}. On the contrary, Algorithm \ref{alg:adaptiveMALA} yields here the best performance when the target is poorly conditioned, showing that our proposed scale adaptation mechanism is able to capture the geometry of the target distribution. We have also implemented the algorithms in the limit $\nu \rightarrow +\infty$, corresponding to the exponential family. In this case, the target is not properly captured by the proposals, showing the interest of our new result about the $\lambda$-exponential family. Finally, as expected, the idealized Algorithm \ref{alg:samplesFromTarget} reaches the best results in most cases, confirming the validity of our optimality conditions.

\begin{table}[H]
\centering
\begin{tabular}{ |c c|c|c|c|c|c|c| }

 \hline
 \multicolumn{2}{|c|}{} & \multicolumn{2}{|c|}{$\nu_{\pi}=1$} & \multicolumn{2}{|c|}{$\nu_{\pi}=3$} & \multicolumn{2}{|c|}{$\nu_{\pi}=10$}\\
 \cline{3-8}
  & & High $d$ & High $\kappa_{\pi}$ & High $d$ & High $\kappa_{\pi}$ & High $d$ & High $\kappa_{\pi}$ \\
 \hline
 \multirow{3}{4em}{$\nu=1$} & Alg.~\ref{alg:samplesFromTarget} & $2.25\cdot 10^{-2}$ & $3.84 \cdot 10^{-3}$ & $2.61 \cdot 10^{-1}$ & $2.15 \cdot 10^{-1}$ & $6.81 \cdot 10^{-1}$ & $4.65 \cdot 10^{-1}$  \\
 \cdashline{3-8}
 & Alg.~\ref{alg:plainMALA} & $\mathbf{3.01 \cdot 10^{-1}}$ & $4.00 \cdot 10^{-1}$ & $\mathbf{3.80 \cdot 10^{-1}}$ & $5.45 \cdot 10^{-1}$ & $\mathbf{7.30 \cdot 10^{-1}}$ & $7.48 \cdot 10^{-1}$ \\
 & Alg.~\ref{alg:adaptiveMALA} & $1.13 \cdot 10^{0}$ & $\mathbf{6.22 \cdot 10^{-2}}$ & $1.25 \cdot 10^{0}$ & $\mathbf{2.29 \cdot 10^{-1}}$ & $1.64 \cdot 10^{0}$ & $\mathbf{4.72 \cdot 10^{-1}}$ \\
 \hline
 \multirow{3}{4em}{$\nu=3$} & Alg.~\ref{alg:samplesFromTarget} & $7.02\cdot 10^{-1}$ & $7.78 \cdot 10^{-1}$ & $1.49 \cdot 10^{-3}$ & $4.50 \cdot 10^{-4}$ & $1.99 \cdot 10^{-1}$ & $1.03 \cdot 10^{-1}$  \\
 \cdashline{3-8}
 & Alg.~\ref{alg:plainMALA} & $\mathbf{1.08 \cdot 10^{0}}$ & $1.83 \cdot 10^{0}$ & $\mathbf{1.46 \cdot 10^{-1}}$ & $4.18 \cdot 10^{-1}$ & $\mathbf{2.49 \cdot 10^{-1}}$ & $4.76 \cdot 10^{-1}$ \\
 & Alg.~\ref{alg:adaptiveMALA} & $2.08 \cdot 10^{0}$ & $\mathbf{1.02 \cdot 10^{0}}$ & $1.03 \cdot 10^{0}$ & $\mathbf{1.78 \cdot 10^{-2}}$ & $1.15 \cdot 10^{0}$ & $\mathbf{1.10 \cdot 10^{-1}}$ \\
 \hline
 \multirow{3}{4em}{$\nu=10$} & Alg.~\ref{alg:samplesFromTarget} & $5.83 \cdot 10^{0}$ & $\times$ & $4.69 \cdot 10^{-1}$ & $2.41 \cdot 10^{-1}$ & $7.08 \cdot 10^{-4}$ & $2.24 \cdot 10^{-4}$ \\
 \cdashline{3-3}\cdashline{5-8}
 & Alg.~\ref{alg:plainMALA} & $\mathbf{7.20 \cdot 10^{0}}$ & $\times$ & $\mathbf{6.68 \cdot 10^{-1}}$ & $1.07 \cdot 10^{0}$ & $\mathbf{5.50 \cdot 10^{-2}}$ & $5.25 \cdot 10^{-1}$ \\
 & Alg.~\ref{alg:adaptiveMALA} & $9.36 \cdot 10^{0}$ & $\times$ & $1.71 \cdot 10^{0}$ & $\mathbf{2.74 \cdot 10^{-1}}$ & $9.67 \cdot 10^{-1}$ & $\mathbf{8.63\cdot 10^{-3}}$ \\
 \hline
 \multirow{3}{4em}{$\nu=+\infty$} & Alg.~\ref{alg:samplesFromTarget} & $\times$ & $\times$ & $4.91 \cdot 10^{1}$ & $1.67 \cdot 10^{1}$ & $4.06 \cdot 10^{1}$ & $1.48 \cdot 10^{1}$ \\
 \cdashline{5-8}
 & Alg.~\ref{alg:plainMALA} & $\times$ & $\times$ & $\mathbf{5.01 \cdot 10^{1}}$ & $1.96 \cdot 10^{1}$ & $\mathbf{4.07 \cdot 10^{1}}$ & $1.56 \cdot 10^{1}$ \\
 & Alg.~\ref{alg:adaptiveMALA} & $\times$ & $\times$ & $5.74 \cdot 10^{1}$ & $\mathbf{1.69 \cdot 10^{1}}$ & $4.18 \cdot 10^{1}$ & $\mathbf{1.48 \cdot 10^{1}}$ \\
 \hline
 
\end{tabular}
\caption{Median of the Rényi divergence $RD_{\alpha}(\pi, q_{\mu_K, \Sigma_K})$ over $100$ runs of $K = 10^3$ iteration. "High $d$" corresponds to $d=20, \kappa_{\pi} = 10$ and "High $\kappa_{\pi}$" to $d=5, \kappa_{\pi} = 10^3$. The symbol $\times$ denotes situations when Equation \eqref{eq:well-posednessCondition} is not satisfied. For each target and each approximating family $\mathcal{T}_{\nu}^d$, we highlighted in bold font the algorithm achieving the lowest value between Algorithm \ref{alg:plainMALA} and \ref{alg:adaptiveMALA}. The values obtained with the idealized Algorithm \ref{alg:samplesFromTarget} are indicated as a reference.}
\label{table}
\end{table}

We can observe on Table \ref{table} that Algorithms \ref{alg:plainMALA} and \ref{alg:adaptiveMALA} yield lower performance than Algorithm \ref{alg:samplesFromTarget}. However, implementing this last algorithm is unrealistic in practice, as it needs samples from the escort of the target. However, we can notice that, when $\nu_{\pi} \neq \nu$, i.e.,~the approximating family does not match with the target, the algorithms based on MALA are able to reach similar performance than Algorithm \ref{alg:samplesFromTarget}.

We see in Table \ref{table} that Algorithm \ref{alg:adaptiveMALA} outperforms Algorithm \ref{alg:plainMALA} on the high $\kappa_{\pi}$ scenario, sometimes by one or two orders of magnitude. This gain can be explained by the fact that Algorithm \ref{alg:adaptiveMALA} better handles the shape of the target. This indicates that as soon as the target may be poorly conditioned, it is best to turn to Algorithm \ref{alg:adaptiveMALA} instead of Algorithm \ref{alg:plainMALA}.

On the other hand, the situation is reversed on the high $d$ scenario, where the performance of Algorithm \ref{alg:adaptiveMALA} decreases. This indicates that on high-dimensional and well-conditioned targets, it may be beneficial to use Algorithm \ref{alg:plainMALA} instead of its scaled version, in Algorithm \ref{alg:adaptiveMALA}.

Finally, let us mention that when the algorithm matches the scenario, that is Algorithm \ref{alg:plainMALA} is used for high $d$ or Algorithm \ref{alg:adaptiveMALA} is used for high $\kappa_{\pi}$, it is especially important to choose $\nu = \nu_{\pi}$. Indeed, this is when we observe the biggest degradation if $\nu \neq \nu_{\pi}$.

\subsection{Maximum likelihood estimation with Student distributions}
\label{subsection:numericsMLE}

We consider now maximum likelihood problems of the form \eqref{pblm:MLE} and \eqref{pblm:MixtMLE} over the $\lambda$-exponential family $\mathcal{Q}_{\lambda}$. We will work in the case where $\mathcal{Q}_{\lambda}$ is the Student family $\mathcal{T}_{\nu}^d$.

\subsubsection{Online maximum likelihood with approximate proximal updates}

We now consider a maximum likelihood estimation problem of the form \eqref{pblm:MLE}. The approximating family is hereagain the Student family, $\mathcal{T}_{\nu}^d$. The samples processed for the maximum likelihood estimation are also distributed following a Student distribution $\pi \in \mathcal{T}_{\nu}^d$. Following \cite{kainth2022}, we consider an online setting, where one sample is delivered at each iteration of the algorithm. We implement Algorithm \ref{alg:proxMLE} in this setting and study how they approach the true maximum likelihood estimator, depending on the value of $\lambda$.

We assume that at every iteration $k \in \mathbb{N}$ one point $x_k \sim \pi$ is sampled. We implement Algorithm \ref{alg:proxMLE} and apply, at each iteration, the operator $P_{\tau_k}^{\{x_k\}}$, with a single data point, namely $x_k$, and we set $\tau_k = \frac{1}{k}$, ensuring an averaging effect. In our setting, this leads to Algorithm \ref{alg:onlineMLE_Student}.

\begin{algorithm}[H]
     Choose an approximating family $\mathcal{T}_{\nu}^d$ and initialize $\mu_0$ and $\Sigma_0$.\\
     \For{$k = 0,\dots$}{
     Using the new sample $x_k$, update $\mu_{k+1}, \Sigma_{k+1}$ following
     \begin{equation}
        \label{eq:onlineMLE_Student}
        \begin{cases}
            \mu_{k+1} = \frac{1}{k+1} x_k + \frac{k}{k+1} \mu_k,\\
            \Sigma_{k+1} = \frac{1}{k+1} x_k x_k^{\top} + \frac{k}{k+1} (\Sigma_k + \mu_k \mu_k^{\top}) - \mu_{k+1} \mu_{k+1}^{\top}.
        \end{cases}
     \end{equation}
     }
     \caption{Online algorithm to solve Problem \eqref{pblm:MLE} on Student families.}
     \label{alg:onlineMLE_Student}
\end{algorithm}

As discussed in Section \ref{subsection:discussion}, Algorithm \ref{alg:onlineMLE_Student} cannot exactly recover the parameters $(\mu_{\pi}, \Sigma_{\pi})$ of the distribution of the data points, even when $k \rightarrow +\infty$. From Propositions \ref{prop:proxMLE} and \ref{prop:escortStudent}, the sequence $\{(\mu_k, \Sigma_k)\}_{k \in \mathbb{N}}$ converges to $(\mu_*, \Sigma_*)$ satisfying $\mu_* = \mu_{\pi}$ and $\Sigma_* = \frac{\nu}{\nu-2}\Sigma_{\pi}$, provided that $\nu > 2$.

We illustrate the behavior of Algorithm \ref{alg:onlineMLE_Student} by showing several runs of it, in dimension $d= 1$, with $\nu \in \{ 3, 10\}$. This yields trajectories in the plane $(\mu, \sigma^2)$. In the Gaussian case, recovered when $\nu \rightarrow +\infty$, we have from Corollary \ref{corollary:studentMLE} that $q_* = \pi$. Trying different values of $\nu$ allows to explore situations that are far from the Gaussian setting when $\nu = 3$, or closer to it when $\nu=10$. In the latter case, we expect a lower mismatch between $\pi$ and $q_*$.

\begin{figure}[t]
    \centering
    \begin{subfigure}[b]{0.48\textwidth}
        \includegraphics[width = \textwidth]{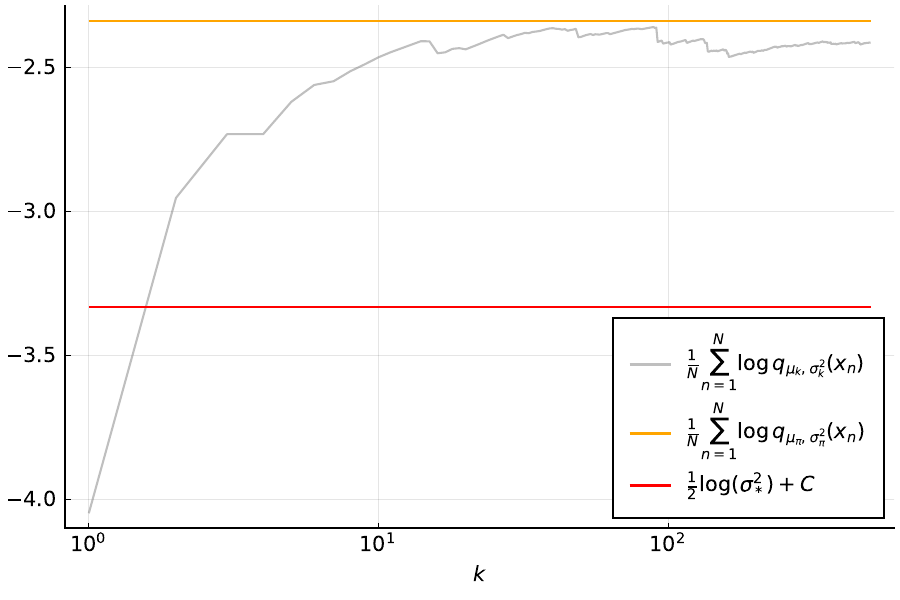}
        \caption{Plot of the log-likelihood of one trajectory of Algorithm \ref{alg:onlineMLE_Student}, with the log-likelihood of the true parameters in orange and the bound of Corollary \ref{corollary:studentMLE} in red.}
    \end{subfigure}  
    \hfill
    \begin{subfigure}[b]{0.48\textwidth}
        \includegraphics[width = \textwidth]{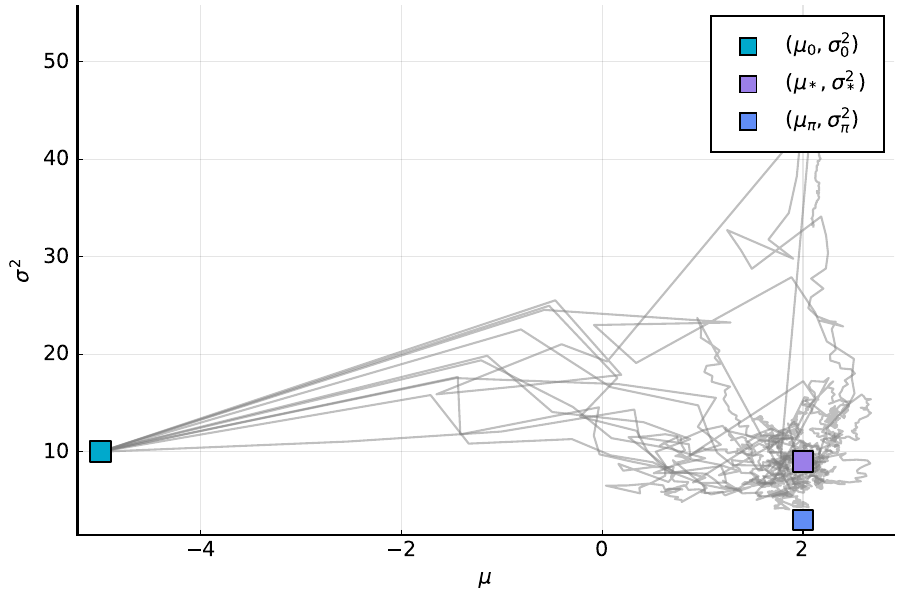}
        \caption{Plot of $10$ trajectories of Algorithm \ref{alg:onlineMLE_Student}, with the point $(\mu_*, \sigma_*^2)$ to which the trajectories converge and the point $(\mu_{\pi}, \sigma_{\pi}^2)$ encoding the distribution of the samples.}
    \end{subfigure} 
    \caption{Plots of trajectories of Algorithm \ref{alg:onlineMLE_Student}, initialized at $\mu_0 = -2$ and $\sigma^2_0 = 10$, in dimension $d=1$, with samples generated following $\pi \in \mathcal{T}_{\nu}^d$, $\nu = 3$, with parameters $(\mu_{\pi}, \sigma_{\pi}^2)$.}
    \label{fig:1dMLE-nu3}
\end{figure}

Figure \ref{fig:1dMLE-nu3} shows that the iterates $\{(\mu_k, \sigma_k^2)\}_{k \in \mathbb{N}}$ generated by Algorithm \ref{alg:onlineMLE_Student} converge to the point $(\mu_*, \sigma_*^2)$, which is different from the true parameters $(\mu_{\pi}, \sigma_{\pi}^2)$. We can also observe in this figure that the log-likelihood of the iterates gets very close to the one of $\pi$. The bound on the sub-optimal log-likelihood, predicted by Proposition \ref{prop:optimalityMLE} and Corollary \ref{corollary:studentMLE}, is satisfied by the iterates $\{(\mu_k, \sigma_k^2)\}_{k \in \mathbb{N}}$ after a small number of iterations.

\begin{figure}[H]
    \centering
    \begin{subfigure}[b]{0.48\textwidth}
        \includegraphics[width = \textwidth]{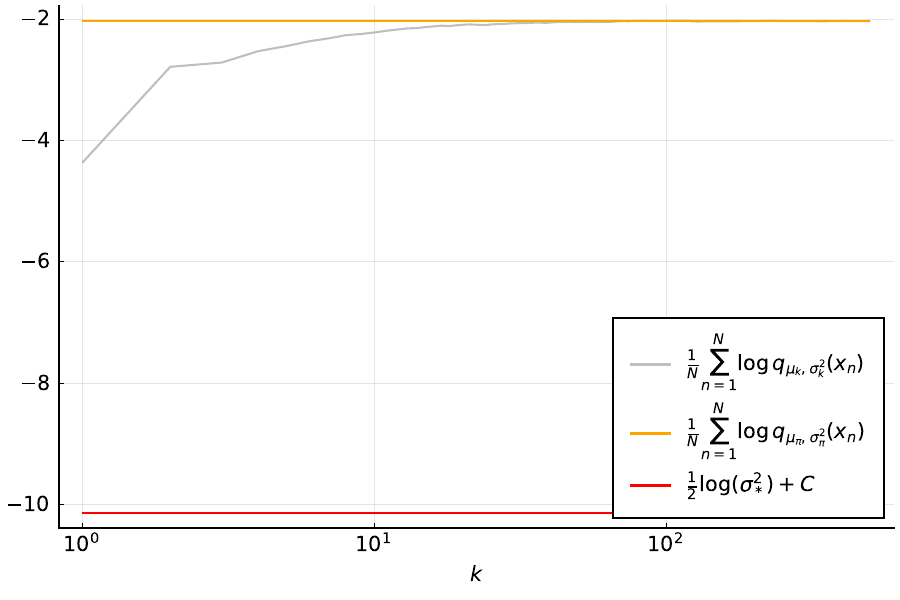}
        \caption{Plot of the log-likelihood of one trajectory of Algorithm \ref{alg:onlineMLE_Student}, with the log-likelihood of the true parameters in orange and the bound of Corollary \ref{corollary:studentMLE} in red.}
    \end{subfigure}  
    \hfill
    \begin{subfigure}[b]{0.48\textwidth}
        \includegraphics[width = \textwidth]{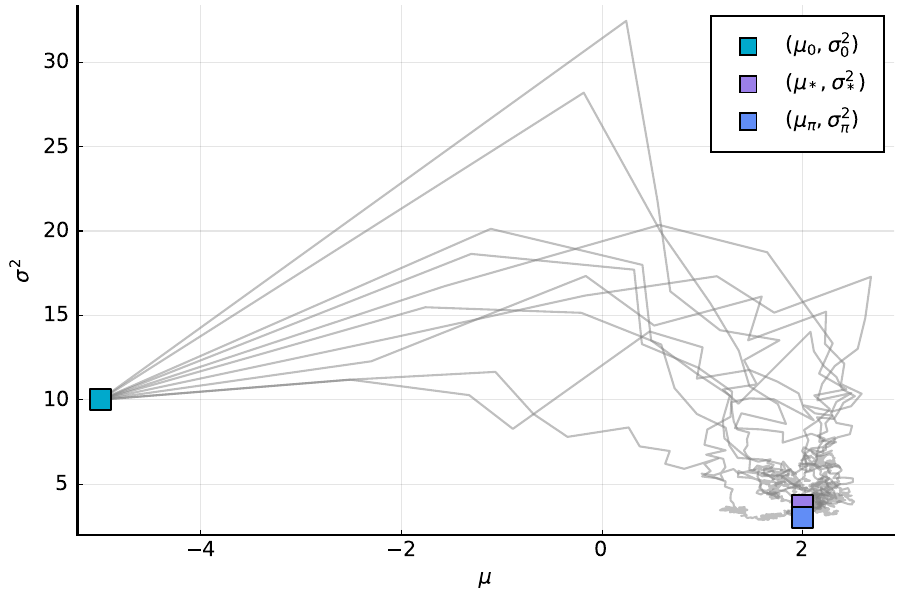}
        \caption{Plot of $10$ trajectories of Algorithm \ref{alg:onlineMLE_Student}, with the point $(\mu_*, \sigma_*^2)$ to which the trajectories converge and the point $(\mu_{\pi}, \sigma_{\pi}^2)$ encoding the distribution of the samples.}
    \end{subfigure} 
    \caption{Plots of trajectories of Algorithm \ref{alg:onlineMLE_Student} initialized at $\mu_0 = -2$ and $\sigma^2_0 = 10$, in dimension $d=1$, with samples generated following $\pi \in \mathcal{T}_{\nu}^d$, $\nu = 10$, with parameters $(\mu_{\pi}, \sigma_{\pi}^2)$.}
    \label{fig:1dMLE-nu10}
\end{figure}

Figure \ref{fig:1dMLE-nu10} considers a higher value of $\nu$. This setting is closer to the Gaussian case, reached in the limit $\nu \rightarrow +\infty$, for which our algorithm reaches the true distribution of the samples. We thus observe that in Figure \ref{fig:1dMLE-nu10}, the log-likelihood converge to the value of the log-likelihood of $\pi$. This is in contrast with Figure \ref{fig:1dMLE-nu3}, in which we can observe gap. We again observe that the iterates converge to the point $(\mu_*, \sigma_*^2)$, which is very close this time to the true parameters $(\mu_{\pi}, \sigma^2_{\pi})$. Compared to Figure \ref{fig:1dMLE-nu3} in the case $\nu=3$, we see that the bound predicted by Corollary \ref{corollary:studentMLE} is reached from the first iterates, meaning that it is not a tight bound for the log-likelihood of $q_*$.

According to our theoretical results, Algorithm \ref{alg:onlineMLE_Student} converges to a sub-optimal solution of Problem \eqref{pblm:MLE}. Such solution is very easy to implement and could be used to initialize a more complex but exact maximum likelihood estimation algorithm \cite{hasanasab2021, ayadi2023}. Moreover, the obtained sub-optimal solution has links with the probability distribution that generated the data, as discussed in Section \ref{subsection:discussion} and thus remains relevant for computing exact maximum likelihood estimators.

\subsubsection{Maximum likelihood estimation with mixtures using relaxed EM}

We consider here a maximum likelihood estimation problem over a mixture of Student distributions, that is, Problem \eqref{pblm:MixtMLE} where $Q_{\lambda} = \mathcal{T}_{\nu}^d$. The samples are also considered to be distributed from a mixture of Student distributions from $\mathcal{T}_{\nu}^d$, denoted by $\pi$ such that $\pi = \sum_{j=1}^J \xi_{*,j} q_{\mu_{*,j},\Sigma_{*,j}}$. We implement the relaxed EM method described in Algorithm \ref{alg:EM} in this particular case. The resulting scheme is summarized in Algorithm \ref{alg:EM_Student}. Algorithm \ref{alg:EM} only requires to work with a $\lambda$-exponential family satisfying Assumptions \ref{assumption:expFamily} and \ref{assumption:supportCondition}, so Algorithm \ref{alg:EM_Student} is a particular instance for a specific choice of family. We notice that Student distributions benefit from specific properties that would also allow the design of exact EM algorithms~\cite{hasanasab2021}, so we aim here at illustrating as a proof of concept the use of our mixture-based algorithm. 

\begin{algorithm}[H]
     Choose an approximating family $\mathcal{T}_{\nu}^d$. Initialize Let $\mu_{0,j} \in \mathbb{R}^d$, $\Sigma_{0,j} \in \mathcal{S}_{++}^d$, and $\xi_{0,j} \geq 0$ for $j=1,\dots,J$ such that $\sum_{j=1}^J \xi_{0,j} = 1$.\\
     \For{$k = 0,\dots$}{
     For every $j=1,\dots,J$, define the function $\gamma_{k,j}$ following Equation \eqref{eq:definitionGamma}, and update the parameters $\xi_{k+1,j}$ and $\mu_{k+1,j}, \Sigma_{k+1,j}$ such that they satisfy
        \begin{align}
            &\xi_{j,k+1} = \frac{1}{N} \sum_{i=1}^N \gamma_{k,j}(x_i),\\
            &\begin{cases}
                \mu_{k+1,j} = \sum_{i=1}^N \frac{\gamma_{k,j}(x_i)}{\sum_{i'=1}^N \gamma_{k,j}(x_{i'})} x_i,\\
                \Sigma_{k+1,j} = \sum_{i=1}^N \frac{\gamma_{k,j}(x_i)}{\sum_{i'=1}^N \gamma_{k,j}(x_{i'})} x_i x_i^{\top} - \mu_{k+1,j} \mu_{k+1,j}^{\top}.
            \end{cases}
        \end{align}  
     }
     \caption{A sub-optimal EM algorithm to solve Problem \eqref{pblm:MixtMLE} on Student families.}
     \label{alg:EM_Student}
\end{algorithm}

We illustrate the behavior of Algorithm \ref{alg:EM_Student} in dimension $d=2$, with $\nu \in \{3,10\}$. Note that a greater value of $\nu$ corresponds to a value of $\lambda$ closer to $0$, in which case the approximate M-steps are closer to being optimal (they are optimal for $\lambda=0$). We use $N=200$ samples, from $\pi = \sum_{j=1}^J \xi_{*,j} q_{\mu_{*,j},\Sigma_{*,j}}$ with $J = 4$. We use $\{ \xi_{*,j}\}_{j=1}^J = \{0.4, 0.1, 0.2, 0.3 \}$, with locations parameters $\mu_{*,1} = (10, 10)^{\top}$, $\mu_{*,2} = (-10,10)^{\top}$, $\mu_{*,3} = - \mu_{*,1}$, and $\mu_{*,4} = - \mu_{*,3}$. The shape matrices $\Sigma_{*,j}$, $j=1,\dots J$ are constructed in $\mathcal{S}_{++}^d$ with condition number $\kappa = 10$ following \cite{moré1989}. This is a controlled setting which allows to observe precisely the behavior of Algorithm \ref{alg:EM_Student} (i.e., an instance of Algorithm \ref{alg:EM}). Algorithm \ref{alg:EM_Student} is initialized with mixture weights satisfying $\xi_{j,0} = 1/J$ for $j=1,\dots,J$, initial locations parameters $\mu_{j,0}$ sampled from a normal distribution with zero mean and covariance $10 I$ and shape parameters being $\Sigma_{j,0} = 10 I$ for $j = 1,\dots,J$.

Figure \ref{fig:EM-nu3} shows the performance of Algorithm \ref{alg:EM_Student} when mixture components are from $\mathcal{T}_{\nu}^d$, with $\nu = 3$. The resulting mixture is able to identify the different components of the data-generating distribution $\pi$ and to achieve a significant increase in terms of log-likelihood from initialization. In this setting, the suboptimality in solving the M-step of the EM algorithm is more pronounced, as the iterates generated by the algorithm cannot reach the log-likelihood achieved by the data-generating distribution. This is to be expected, as the corresponding value of $\lambda$ is far from $\lambda=0$ where the M-step is optimal.

\begin{figure}[H]
    \centering
    \begin{subfigure}[b]{0.48\textwidth}
        \includegraphics[width = \textwidth]{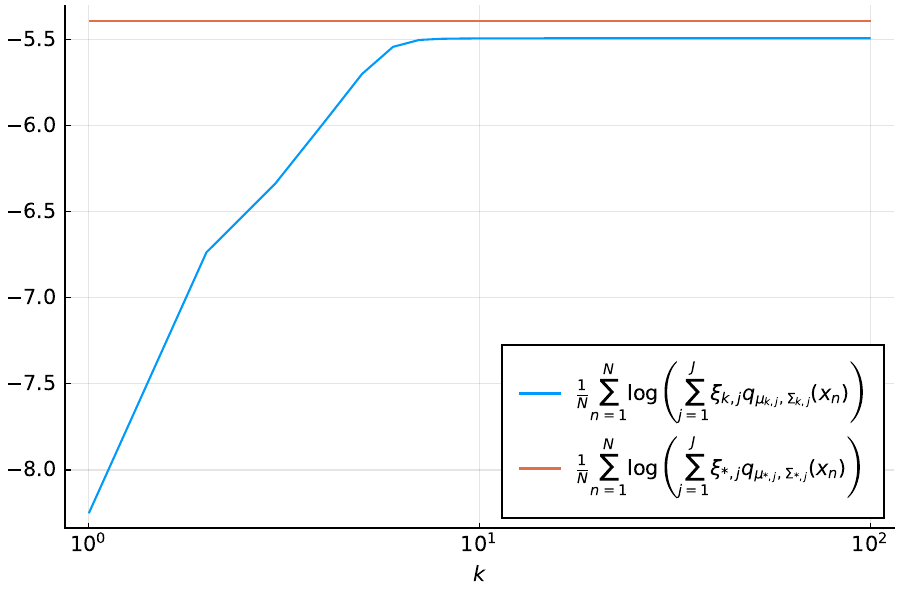}
        \caption{Plot of the log-likelihood achieved by the iterates of Algorithm \ref{alg:EM_Student}, with the log-likelihood of data-generating distribution in orange.}
    \end{subfigure}  
    \hfill
    \begin{subfigure}[b]{0.48\textwidth}
        \includegraphics[width = \textwidth]{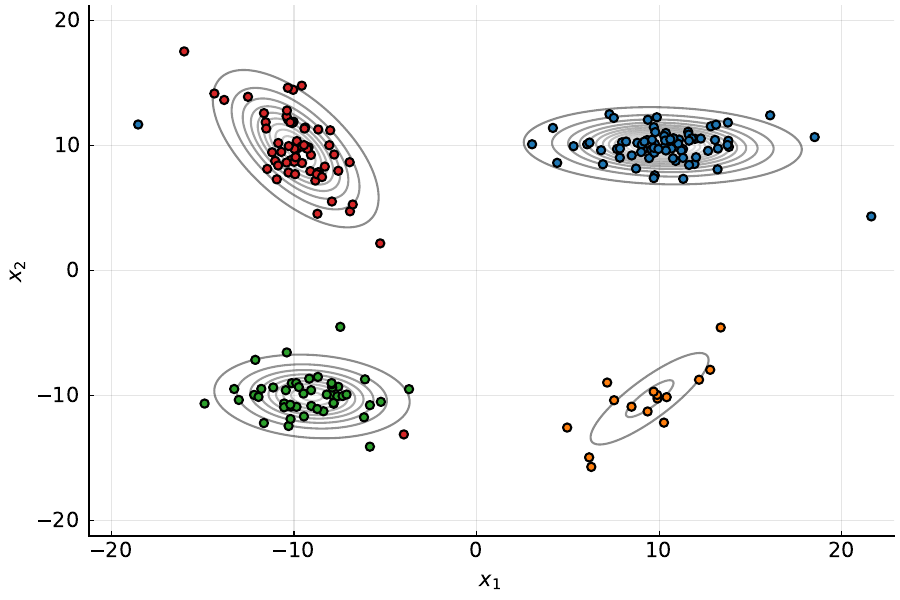}
        \caption{Plot of the samples generated by $\pi$, the different colors denoting the component of the mixture from which they have been drawn. The level lines of the final distribution generated by Algorithm \ref{alg:EM_Student} are shown in grey.}
    \end{subfigure} 
    \caption{Plot of the performance achieved by Algorithm \ref{alg:EM_Student} after $K = 100$ iterations in terms of log-likelihood and graphical representation in the sample space, for mixtures of distributions in $\mathcal{T}_{\nu}^d$, with $\nu =3$ and $d=2$.}
    \label{fig:EM-nu3}
\end{figure}

Figure \ref{fig:EM-nu10} shows the performance of Algorithm \ref{alg:EM_Student} when mixture components are from $\mathcal{T}_{\nu}^d$, with $\nu = 10$. We observe that the proposed algorithm generates iterates whose log-likelihood matches the one of the data-generating distribution. Indeed, this setting is closer to the case $\lambda = 0$ where our algorithm solves the M-step in the EM algorithm exactly, showing that the sub-optimality has no severe effect in this case. 

\begin{figure}[H]
    \centering
    \begin{subfigure}[b]{0.48\textwidth}
        \includegraphics[width = \textwidth]{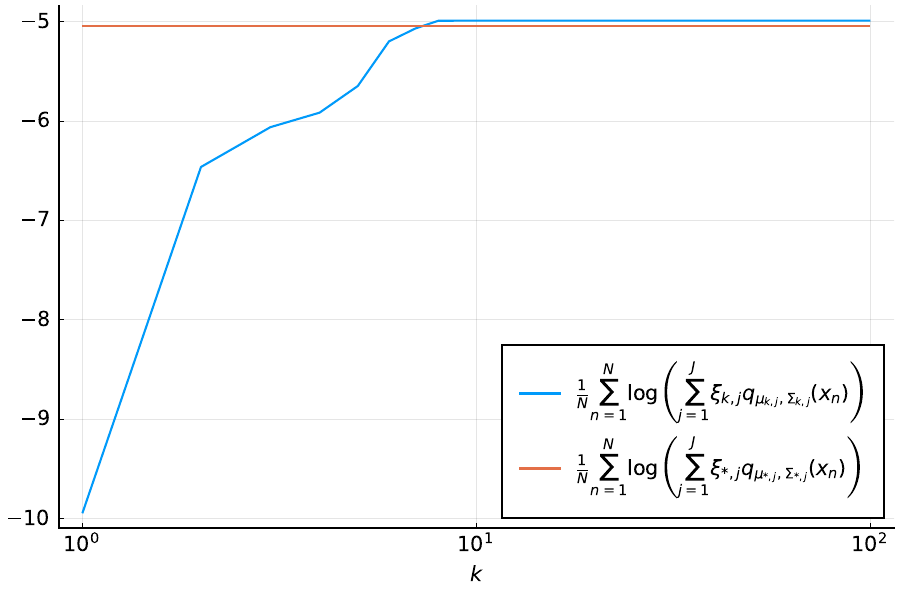}
        \caption{Plot of the log-likelihood achieved by the iterates of Algorithm \ref{alg:EM_Student}, with the log-likelihood of data-generating distribution in orange.}
    \end{subfigure}  
    \hfill
    \begin{subfigure}[b]{0.48\textwidth}
        \includegraphics[width = \textwidth]{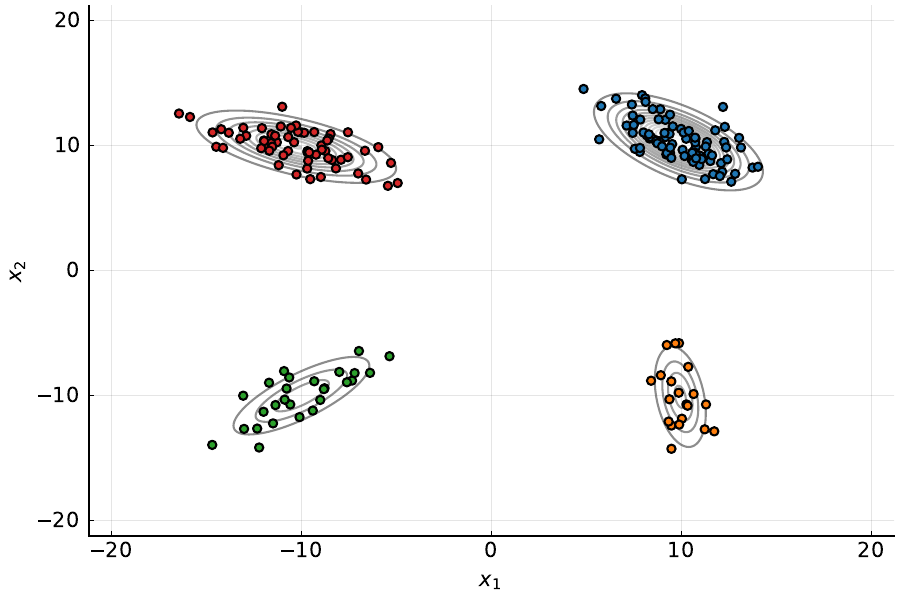}
        \caption{Plot of the samples generated by $\pi$, the different colors denoting the component of the mixture from which they have been drawn. The level lines of the final distribution generated by Algorithm \ref{alg:EM_Student} are shown in grey.}
    \end{subfigure} 
    \caption{Plot of the performance achieved by Algorithm \ref{alg:EM_Student} after $K = 100$ iterations in terms of log-likelihood and graphical representation in the sample space, for mixtures of distributions in $\mathcal{T}_{\nu}^d$, with $\nu =10$ and $d=2$.}
    \label{fig:EM-nu10}
\end{figure}

\section{Conclusion}
\label{section:conclusion}

In this work, we have studied variational inference and maximum likelihood estimation problems over the $\lambda$-exponential family, and we have proposed algorithms to solve theses problems. Several known results on the standard exponential family are retrieved as special cases.

First, we have shown that variational inference problems over the $\lambda$-exponential family can be solved by satisfying a generalized moment-matching condition that extends the existing one for the standard exponential family. We have also proposed an iterative algorithm to solve this problem, which identifies with a Bregman proximal algorithm in the particular case of the exponential family. The usefulness of our optimality conditions and our algorithm is confirmed by numerical experiments on heavy-tailed targets. These experiments show that the $\lambda$-exponential family can be used to capture phenomenon that the standard exponential family fails to represent.

Second, in maximum likelihood estimation problems, we exhibited sub-optimal solutions with a novel algorithm converging to it. In the case of the exponential family, the solutions become optimal and the algorithm reads again as a Bregman proximal algorithm. In the general case, our algorithm is quick and easy to implement, as demonstrated through numerical experiments. For problems with mixtures, we also proposed a relaxed EM algorithm that recovers the standard EM algorithm in the case of the exponential family. An interesting line of research would be the combination of our algorithms, which leads to sub-optimal solutions, with exact methods. 

We achieved these results by extending convex analysis notions to a more general setting, replacing the scalar product by a well-chosen non-linear coupling. By leveraging the specific structure of the problems we consider, we have been able to exhibit optimality conditions and proximal-like algorithm using such tools, which is one of the main novelties of our work. Extending our results and techniques to more general problems, including other divergences and distances over probabilities, or more general couplings, related for instance with elliptical distributions, seems to be an exciting area of research.

\appendix
\section{Proof of Proposition \ref{prop:studentFamily}}
\label{appendix:proof}

\begin{proof}[Proof of Proposition \ref{prop:studentFamily}]
    $(i)$ Consider a distribution in $\mathcal{T}^d_{\nu}$ with location parameter $\mu$ and scale matrix $\Sigma$. Then we compute for any $x \in \mathbb{R}^d$ the following.
    \begin{align*}
        q_{\mu, \Sigma}(x) &\propto \left( 1 + \frac{1}{\nu} (x-\mu)^{\top}\Sigma^{-1}(x-\mu) \right)^{- \frac{\nu + d}{2}}\\
        &\propto  \left( 1 + \frac{1}{\nu} \mu^{\top} \Sigma^{-1} \mu - \frac{2}{\nu} \mu^{\top} x + \frac{1}{\nu} x^{\top} \Sigma^{-1} x  \right)^{- \frac{\nu + d}{2}}\\
        &\propto \left(1 + \frac{1}{\nu} \mu^{\top} \Sigma^{-1} \mu\right)^{-\frac{\nu+d}{2}} \left( 1 + \left(- \frac{2}{\nu+d} \right) \left( -\frac{\nu+d}{2 \nu(1 + \frac{1}{\nu} \mu^{\top} \Sigma^{-1} \mu)} \left(- 2 \mu^{\top} x + x^{\top} \Sigma^{-1} x \right) \right)\right)^{- \frac{\nu + d}{2}}
    \end{align*}
    and since $\mu^{\top} x = \langle \mu, x \rangle$ and $x^{\top} \Sigma^{-1} x = \langle \Sigma^{-1}, xx^{\top} \rangle$, we can identify that $q_{\mu, \Sigma} = q_{\vartheta}$.

    We can identify from the above that $\mathcal{T}_{\nu}^d$ is an instance of the $\lambda$-exponential family with $\lambda = - \frac{2}{\nu+d}$. Its parameters are
    \begin{equation}
        \label{eq:naturalParam}
        \vartheta_1 = \frac{\nu + d}{\nu + \mu^{\top} \Sigma^{-1} \mu} \Sigma^{-1} \mu,\quad \vartheta_2 = - \frac{\nu + d}{2( \nu + \mu^{\top} \Sigma^{-1} \mu)} \Sigma^{-1}.
    \end{equation}

    In order to compute $\varphi_{\lambda}$, let us inverse the mapping $\mu, \Sigma \longmapsto \vartheta_1, \vartheta_2$. First, we can easily compute that $\mu = - \frac{1}{2}\vartheta_2^{-1} \vartheta_1$. Now, we compute the intermediate quantity $\mu^{\top}\Sigma^{-1} \mu$. Remark that
    \begin{equation}
        \label{eq:scalarProductIntermediate}
        \vartheta_1^{\top} \vartheta_2^{-1} \vartheta_1 = -\frac{2(\nu+d)}{\nu + \mu^{\top} \Sigma^{-1} \mu} \mu^{\top}\Sigma^{-1}\mu.
    \end{equation}
    Hence we deduce that 
    \begin{equation}
        \label{eq:scalarProductStudent}
        \nu + \mu^{\top}\Sigma^{-1} \mu = \frac{2 \nu (\nu + d)}{2(\nu+d) + \vartheta_1^{\top} \vartheta_2^{-1} \vartheta_1}.
    \end{equation}
    From Equations \eqref{eq:naturalParam} and \eqref{eq:scalarProductStudent}, it comes that $\Sigma^{-1} = -\frac{4 \nu}{2(\nu+d) + \vartheta_1^{\top} \vartheta_2^{-1} \vartheta_1}\vartheta_2$. Summarizing our results, we thus obtained
    \begin{equation}
        \label{eq:parameters}
        \mu = -\frac{1}{2} \vartheta_2^{-1} \vartheta_1,\quad \Sigma = - \frac{2(\nu+d) + \vartheta_1^{\top}\vartheta_2^{-1} \vartheta_1}{4 \nu} \vartheta_2^{-1}.
    \end{equation}

    Finally, we turn to the computation of $\varphi_{\lambda}$. We can identify
    \begin{align*}
        \varphi_{\lambda}(\vartheta) &= - \log \left( \frac{\det(\Sigma)^{-\frac{1}{2}}}{Z_{\nu}} \left(1 + \frac{1}{\nu} \mu \Sigma^{-1} \mu^{\top}\right)^{-\frac{\nu+d}{2}} \right)\\
        &= \frac{1}{2} \logdet(\Sigma) + \frac{\nu+d}{2} \log \left( 1 + \frac{1}{\nu} \mu \Sigma^{-1} \mu^{\top}\right) + \log(Z_{\nu})\\
        &=\frac{d}{2} \log \left( \frac{2(\nu+d) + \vartheta_1^{\top}\vartheta_2 \vartheta_1}{4 \nu} \right) + \frac{1}{2} \logdet(- \vartheta_2^{-1}) + \frac{\nu+d}{2} \log \left(\frac{2(\nu+d)}{2(\nu+d) + \vartheta_1^{\top} \vartheta_2 \vartheta_1} \right) + \log Z_{\nu}\\
        &= -\frac{d}{2}\log(4\nu) + \frac{1}{2} \logdet(- \vartheta_2^{-1}) + \frac{\nu+d}{2}\log(2(\nu+d)) - \frac{\nu}{2} \log(2(\nu+d) + \vartheta_1^{\top} \vartheta_2^{-1} \vartheta_1) + \log Z_{\nu}.
    \end{align*}
    This shows in particular that $\domain \varphi_{\lambda}(\vartheta) = \{\vartheta \in \mathbb{R}^d \times \mathcal{S}_{--}^d,\,2(\nu+d) + \vartheta_1^{\top} \vartheta_2^{-1} \vartheta_1 > 0\}$, which is non-empty. This shows that $\mathcal{T}_{\nu}^d$ satisfies Assumption \ref{assumption:expFamily}.

    $(ii)$ We now turn to the study of the escort probabilities. We can compute for $x \in \mathbb{R}^d$ the following:
    \begin{align*}
        q_{\mu, \Sigma}^{(\alpha)}(x) &= \frac{1}{Z^{(\alpha)}} \left( 1 + \frac{1}{\nu} (x-\mu)\Sigma^{-1}(x-\mu)^{\top} \right)^{- \alpha\frac{\nu + d}{2}}\\
        &= \frac{1}{Z^{(\alpha)}} \left( 1 + \frac{1}{\nu+2} (x-\mu) \left( \frac{\nu}{\nu+2}\Sigma \right)^{-1}(x-\mu)^{\top} \right)^{- \frac{(\nu+2) + d}{2}}.
    \end{align*}
    
    We recognize that $q_{\mu, \Sigma}^{(\alpha)}$ is a Student distribution with $\nu+2>2$ degrees of freedom, location parameter $\mu$ and scale matrix $\frac{\nu}{\nu+2}\Sigma$. Hence, we obtain that 
    \begin{equation}
        \begin{cases}
            q_{\mu, \Sigma}^{(\alpha)}(x) &= \mu,\\
            q_{\mu, \Sigma}^{(\alpha)}((x-\mu)(x-\mu)^{\top}) &= \frac{\nu+2}{(\nu+2)-2} \left( \frac{\nu}{\nu+2} \Sigma \right) = \Sigma.
        \end{cases}
    \end{equation}

    To show the bijection result, we show that the map $(\mu, \Sigma) \longmapsto (\vartheta_1, \vartheta_2)$ is a bijection between $\mathbb{R}^d \times \mathcal{S}_{++}^d$ and $\domain \varphi_{\lambda}$. Consider $\mu \in \mathbb{R}^d$, $\Sigma \in \mathcal{S}_{++}^d$ and $\vartheta_1, \vartheta_2$ defined as in Equation \eqref{eq:naturalParam}. We can first remark that $\vartheta_1 \in \mathbb{R}^d$ and that $\vartheta_2 \in \mathcal{S}_{--}^d$. Using the result of Equation \eqref{eq:scalarProductIntermediate}, we now compute
    \begin{align*}
        2(\nu+d) + \vartheta_1^{\top} \vartheta_2^{-1} \vartheta_1 &= 2(\nu+d) - \frac{2(\nu+d)}{\nu + \mu^{\top}\Sigma^{-1} \mu} \mu^{\top}\Sigma^{-1} \mu\\
        &= \frac{2\nu(\nu+d)}{\nu + \mu^{\top}\Sigma^{-1} \mu} >0,
    \end{align*}
    showing that $\vartheta_1, \vartheta_2 \in \domain \varphi_{\lambda}$. Consider now $\vartheta_1, \vartheta_2 \in \domain \varphi_{\lambda}$, and $\mu, \Sigma$ as given by Equation \eqref{eq:parameters}. By definition of $\domain \varphi_{\lambda}$, $\mu \in \mathbb{R}^d$ and $\Sigma \in \mathcal{S}_{++}^d$, showing the result.

    We now compute the Rényi entropy of $q_{\mu, \Sigma} \in \mathcal{T}_{\nu}^d$ for $\alpha = 1 - \lambda$ with $\lambda = - \frac{2}{\nu +d}$. By using similar steps as above, we obtain
    \begin{equation}
        H_{\alpha}(q_{\mu, \Sigma}) = \frac{1}{1-\alpha} \log \left( \frac{1}{Z_{\nu}^{\alpha} (\det \Sigma)^{\frac{\alpha}{2}}} Z_{\nu+2} \det \left( \frac{\nu}{\nu+2}\Sigma \right)^{\frac{1}{2}} \right).
    \end{equation}

    $(iii)$ Consider $\vartheta \in \domain \varphi_{\lambda}$, and $p \in \mathcal{P}(\mathcal{X}, dx)$. Consider $\mu, \Sigma \in \mathbb{R}^d \times \mathcal{S}_{++}^d$ given by Equation \eqref{eq:parameters}. We can then compute
    \begin{equation}
        1 + \lambda \langle \vartheta, p^{(\alpha)}(T) \rangle = 1 - \frac{2}{\nu + \mu^{\top}\Sigma^{-1}\mu} p^{(\alpha)}(x)^{\top} \Sigma^{-1} \mu + \frac{1}{\nu + \mu^{\top}\Sigma^{-1}\mu} \tr(\Sigma^{-1}p^{(\alpha)}(xx^{\top})),
    \end{equation}
    which is defined if $p^{(\alpha)}$ has finite first and second order moments. Introducing the quantity $V := p^{(\alpha)}((x-p^{(\alpha)}(x))(x-p^{(\alpha)}(x))^{\top}) = p^{(\alpha)}(xx^{\top}) - p^{(\alpha)}(x)p^{(\alpha)}(x)^{\top} \in \mathcal{S}_+^d$, we get for any $\vartheta \in \domain \varphi_{\lambda}$ that
    \begin{align*}
        1 + \lambda \langle \vartheta, p^{(\alpha)}(T) \rangle &= 1 - \frac{2}{\nu + \mu^{\top}\Sigma^{-1}\mu} p^{(\alpha)}(x)^{\top} \Sigma^{-1} \mu + \frac{1}{\nu + \mu^{\top}\Sigma^{-1}\mu} \tr(\Sigma^{-1}V)\\ &+ \frac{1}{\nu + \mu^{\top}\Sigma^{-1}\mu} p^{(\alpha)}(x)^{\top} \Sigma^{-1} p^{(\alpha)}(x)\\
        &= \frac{1}{\nu + \mu \Sigma^{-1}\mu} \left(\nu + (\mu-p^{(\alpha)}(x))^{\top} \Sigma^{-1} (\mu - p^{(\alpha)}(x))^{\top} + \tr(\Sigma^{-1} V) \right)\\
        &\geq \frac{\nu}{\nu + \mu \Sigma^{-1}\mu}.
    \end{align*}
    This shows that for any $\vartheta \in \domain \varphi_{\lambda}$, and $p^{(\alpha)} \in \mathcal{P}(\mathcal{X},dx)$ with finite first and second order moments, the quantity $c_{\lambda}(\vartheta, p^{(\alpha)}(T))$ is in $\mathbb{R}$. With the result of $(ii)$, this shows that $\mathcal{T}_{\nu}^d$, seen as an instance of the $\lambda$-exponential family, satisfies Assumptions \ref{assumption:supportCondition}.
\end{proof}

\bibliographystyle{abbrv}
\bibliography{biblio}

\end{document}